\numberwithin{equation}{section}
\newtheorem{theorem}{Theorem}[section]
\newtheorem{lemma}[theorem]{Lemma}
\newtheorem{proposition}[theorem]{Proposition}
\theoremstyle{definition}
\newtheorem{defn}{Definition}[section]
\theoremstyle{remark}
\newtheorem{remark}[theorem]{Remark}
\numberwithin{equation}{section}
\newcounter{saveeqn}
\newcommand{\ba}{\begin{array}}
\newcommand{\ea}{\end{array}}
\newcommand{\bea}{\begin{eqnarray*}}
\newcommand{\eea}{\end{eqnarray*}}
\newcommand{\bean}{\begin{eqnarray}}
\newcommand{\eean}{\end{eqnarray}}
\def\Oh{\mathcal O}
\title[Determining anomalies in a semilinear equation]{Determining anomalies in a semilinear elliptic equation by a minimal number of measurements}
\author{Huaian Diao}
\address{School of Mathematics, Jilin University, Changchun 130012, China}
\email{diao@jlu.edu.cn}
\author{Xiaoxu Fei}
\address{School of Mathematics, Central South University, Changsha, China}
\email{feixx0921@163.com}
\author{Hongyu Liu}
\address{Department of Mathematics, City University of Hong Kong, Kowloon, Hong Kong SAR, China}
\email{hongyu.liuip@gmail.com, hongyliu@cityu.edu.hk}
\author{Li Wang}
\address{Department of Mathematics, City University of Hong Kong, Kowloon, Hong Kong SAR, China}
\email{1322771004@qq.com}
\date{} 
\begin{document}

\begin{abstract}

We are concerned with the inverse boundary problem of determining anomalies associated with a semilinear elliptic equation of the form $-\Delta u+a(\mathbf x, u)=0$, where $a(\mathbf x, u)$ is a general nonlinear term that belongs to a H\"older class. It is assumed that the inhomogeneity of $f(\mathbf x, u)$ is contained in a bounded domain $D$ in the sense that outside $D$, $a(\mathbf x, u)=\lambda u$ with $\lambda\in\mathbb{C}$. We establish novel unique identifiability results in several general scenarios of practical interest. These include determining the support of the inclusion (i.e. $D$) independent of its content (i.e. $a(\mathbf{x}, u)$ in $D$) by a single boundary measurement; and determining both $D$ and $a(\mathbf{x}, u)|_D$ by $M$ boundary measurements, where $M\in\mathbb{N}$ signifies the number of unknown coefficients in $a(\mathbf x, u)$. The mathematical argument is based on microlocally characterising the singularities in the solution $u$ induced by the geometric singularities of $D$, and does not rely on any linearisation technique.

\medskip

\noindent{\bf Keywords:}~~semilinear elliptic PDE; inverse boundary problem; nonlinear inclusion;  minimal measurement; singularities.

\noindent{\bf 2010 Mathematics Subject Classification:}~~35R30, 35J61, 78A46, 35Q60

\end{abstract}

\maketitle

\section{Introduction}

\subsection{Mathematical setup and summary of major findings}

Initially focusing on the mathematics, but not the physics, we introduce the forward boundary value problem associated with a semilinear elliptic equation:
\begin{equation}\label{eq:helm1}
\Delta u+a(\mathbf x, u)=0\quad\mbox{in}\ \ \Omega, \quad u|_{\partial\Omega}=\psi,
\end{equation}
where
\begin{enumerate}
\item $\Omega$ is a bounded Lipschitz domain in $\mathbb{R}^n$, $n=2, 3$, and $\psi\in H^{1/2}(\partial\Omega)$;

\item there is a bounded Lipschitz domain $D\Subset\Omega$ such that $\Omega\backslash\overline{D}$ is connected, and a constant $\lambda\in\mathbb{C} $ such that
\begin{equation}\label{eq:aaa1}
a(\mathbf x, u)=(f(\mathbf x, u)-\lambda u)\chi_D+\lambda u,\quad x\in \Omega. 
\end{equation}
That is, $a(\mathbf{x}, u)=f(\mathbf{x}, u)$ in $D$, whereas $a(\mathbf x, u)=\lambda u$ in $\Omega\backslash\overline{D}$. Furthermore, we suppose that $
	 a(\mathbf x, u)$ is $ C^1$-continuous with respect to $u$ for a fixed $\mathbf x\in \Omega$ and $\partial_u a(\mathbf x, u) \in L^\infty (\Omega ).$

\item $f(\mathbf x,z):\ (\mathbf x, z)\in D \times\mathbb{C}\mapsto\mathbb{C}$ fulfils the following admissibility conditions:
\begin{enumerate}

\item For $u(\cdot)\in H^1(\Omega)$, $f(\mathbf{x}, u(\cdot))\in L^2(\Omega)$;

\item $f(\mathbf x, z)$ is ${C}^\gamma$-continuous, $\gamma\in (0, 1)$, with respect to $(\mathbf x, z)\in D\times\mathbb{C}$;

\item $f(\mathbf x, z)$ fulfills that for a proper $\psi\in H^{1/2}(\partial \Omega )$, there exists a solution $u\in H^1(\Omega)$ to \eqref{eq:helm1}.
\end{enumerate}
In such a case, we say that $f$ belongs to the admissible class $\mathscr{A}$ and write $f\in\mathscr{A}$ or $(D; f)\in\mathscr{A}$ to signify the support of the inhomogeneity of $f$ is $D$.
\end{enumerate}

In what follows, we assume that $\lambda$ is known, which characterises the homogeneous space $\Omega\backslash\overline{D}$, whereas $(D; f)$ is unknown, which is referred to as an anomalous inhomogeneous inclusion. In this paper, we aim to study the following inverse boundary problem:
\begin{equation}\label{eq:ip1}
\Lambda_{D, f}(\psi):=(\psi|_{\partial\Omega}, \partial_\nu u|_{\Omega}),\ \psi\in H^{1/2}(\partial\Omega)\ \mbox{fixed}\longrightarrow D\quad \mbox{independent of $f$},
\end{equation}
where $u\in H^1(\Omega)$ is a solution to \eqref{eq:helm1}, and $\nu\in\mathbb{S}^{n-1}:=\{\mathbf x\in\mathbb{R}^n; |\mathbf x|=1\}$ is the exterior unit normal vector to $\partial\Omega$. In the physical context, $D$ signifies the support of the anomalous inhomogeneity whereas $f$ characterises its physical content. Hence, the inverse problem \eqref{eq:ip1} is concerned with recovering the location and shape of the anomalous inhomogeneity independent of its content. It is also referred to as the inverse inclusion problem in the theory of inverse problems. Furthermore, we also study the following inverse boundary problem:
\begin{equation}\label{eq:ip2}
\Lambda_{D, f}(\psi_j):=(\psi_j|_{\partial\Omega}, \partial_\nu u_j|_{\Omega}),\ \psi_j\in H^{1/2}(\partial\Omega),\ j=1,\ldots, N\in\mathbb{N} \longrightarrow \mbox{both $D$ and $f$},
\end{equation}
where $u_j\in H^1(\Omega)$ is a solution to \eqref{eq:helm1} associated with the boundary data $u_j|_{\partial\Omega}=\psi_m$. Here, $N\in\mathbb{N}$ signifies the number of unknown coefficients of $f(x, u)$, say e.g. $f(x, u)=\sum_{j=1}^N \lambda_j u^j$ with $\lambda_j\in\mathbb{C}$. That is, for the inverse problem \eqref{eq:ip2}, we aim at recovering both the support and its physical content of the inhomogeneous inclusion by $N$ boundary measurements. It can be verified that both inverse problems \eqref{eq:ip1} and \eqref{eq:ip2} are formally determined; that is, the cardinalities of the unknown inclusion and the known boundary data are equal. By cardinality, we mean the number of independent variables in a quantity. Hence, we refer to them as inverse problems with a minimal number of measurements, or simply minimal boundary measurements.

It is emphasised that we only assume the existence of a solution to \eqref{eq:helm1} and do not assume the uniqueness of the solution. That is, there might exist multiple solutions to \eqref{eq:helm1}. Associated with a single $\psi\in H^{1/2}(\partial\Omega)$, $\Lambda_\psi$ is referred to as a single pair of Cauchy data, or a single boundary measurement. Throughout, we always assume that $\psi$ is properly chosen such that
\eqref{eq:helm1} has a solution $u\in H^1(\Omega)$. By the admissibility of $f$, one can easily infer from the standard interior regularity estimate for elliptic PDEs that $u\in H^2(\Omega')$ for any $\Omega'\Subset\Omega$ (cf. \cite{McL}).

For the inverse inclusion problem \eqref{eq:ip1}, we mainly consider its unique identifiability issue. That is, we aim at establishing the sufficient conditions under which $D$ can be uniquely determined by $\Lambda_{D, f}(\psi)$ in the sense that if two admissible inclusions $(D_m; f_m)$, $m=1,2$, produce the same boundary measurement, i.e. $\Lambda_{D_1, f_1}(\psi)=\Lambda_{D_2, f_2}(\psi)$ associated with a fixed $\psi\in H^{1/2}(\partial\Omega)$, then one has $D_1= D_2$. The main results that we establish in this paper for the inverse problem \eqref{eq:ip1} can be roughly summarised as follows:
\begin{enumerate}
\item Under a generic condition, a local unique identifiability result is established showing that the difference of the supports of two nonlinear anomalies cannot possess corner or conic singularities;

\item If certain a-prior information is available on $D$, say e.g. it is a convex polygon or polyhedron or of a corona-shape, it can be uniquely determined.

\item In several practical scenarios, say e.g. nonlinear anomalies are embedded in linear anomalies in a layered manner or certain multi-layered/nest nonlinear anomalies, we show that under generic conditions, one can determine the support of each layer by a single measurement within convex polygonal/polyhedral geometries.
\end{enumerate}

Similarly, for the inverse problem \eqref{eq:ip2}, we establish unique identifiability results in three scenarios:
\begin{enumerate}
\item If $f(x, u)=\sum_{j=1}^N \lambda_j u^j$ with $\lambda_j\in\mathbb{C}$ and $D$ is of polygonal/polyhedral or corona-shape, then under generic conditions, we can establish the unique identifiability result in determining both $D$ and $f$ by using $N$ measurements.

\item If the anomalous inclusion is of a layered/nest structure with $f$ in each layer of the form given in (1) above (distinct among different layers), we can establish the unique identifiability result in determining both $D$ and $f$ by using minimal boundary measurements. 

\end{enumerate}

\subsection{Physical motivation and background discussion}

In the physical context, the PDE system \eqref{eq:helm1} can be used to describe several physical problems of practical importance, especially in the wave scattering theory. For example, if one takes
\begin{equation}\label{eq:phy1}
\lambda=k^2\quad \mbox{with}\ \ k\in\mathbb{R}_+;\quad f(\mathbf x, u)=k^2 q_1(\mathbf x) u\quad \mbox{with}\ \ q_1\in L^\infty(D),
\end{equation}
 \eqref{eq:helm1} is the classical Helmholtz system, which describes the transverse time-harmonic electromagnetic scattering when $n=2$ \cite{LL1}, and the time-harmonic acoustic scattering when $n=3$ \cite{curvature2018}. In the physical setup, $k\in\mathbb{R}_+$ is the wavenumber and $q_1$ characterises the medium content of an inhomogeneity $D$. In nonlinear optics or acoustics \cite{Boyd}, $f(\mathbf x, u)$ can be of a more general form than that in \eqref{eq:phy1}, say e.g. {$f(\mathbf x, u)=k^2 q_1(\mathbf x)u+q_2(\mathbf x) u^2$ } to characterise the nonlinear effect. In a similar manner, \eqref{eq:helm1} can also be used to describe the Schr\"odinger equation that governs the quantum scattering (cf. \cite{HunSig}). On the other hand, we note that the well-posedness of the elliptic system \eqref{eq:helm1} has been extensively studied in the literature: in the linear case, the well-posedness is well understood \cite{LSSZ,McL}; and in the nonlinear case, the well-posedness can be achieved in many generic setups (cf. \cite{IMN} and the references cited therein) and in particular, if smallness is imposed on the solution, which in many situations of practical interest is equivalent to imposing smallness on the boundary input $\psi$, the well-posedness of \eqref{eq:helm1} can also be guaranteed; see e.g. \cite{LLLS} where the nonlinear term $f(x, u)$ is assumed to belong to a certain analytic class. Since our focus is the inverse problems \eqref{eq:ip1} and \eqref{eq:ip2}, and also in order to appeal for a general study, we always assume the well-posedness of the forward problem \eqref{eq:helm1}. Nevertheless, for self-containedness as well as our use, we establish the well-posedness for small solutions of the forward problem \eqref{eq:helm1} when $f(x, u)$ is only assumed to belong to the H\"older class. 

The inverse inclusion problem \eqref{eq:ip1} is a longstanding problem in the theory of inverse problems, but mainly restricted to linear mediums. We refer to \cite{moi5,moi6} for recent progress in electrostatics, \cite{BL3,curvature2018,Blasten2020,CDL,DCL, LT1} in inverse acoustic scattering, \cite{BLX2020,DFLY} in inverse electromagnetic scattering and \cite{BLin,DLS} in inverse elastic scattering. To our best knowledge, there is no result available for the inverse inclusion problem \eqref{eq:ip1} associated with general nonlinear anomalies. On the other hand, we note that recently there are many studies on the inverse boundary problem of recovering $f$ by knowledge of $\Lambda(\psi)$ associated with all $\psi\in H^{1/2}(\partial \Omega)$; that is, infinitely/uncountably many boundary measurements are needed. We refer to \cite{ImaYam,Isa1,IsaNac,Sun,LLLS} and the references cited therein for related results. It is pointed out that in all of those inverse problem studies, $f(\mathbf x, u)$ is usually required to possess higher regularities than the H\"older one required in the current article. By aiming at recovering the support of the anomaly, but not its physical content, we can work with merely H\"older continuous nonlinearities. Moreover, it is emphasised that we only make use of a single boundary measurement. If $f(x, u)$ is of a particular (still general) form, we can determine both the support and its physical content of the anomalous inclusion by a  minimal number of boundary measurements. Nevertheless, it is also pointed out that we require that $D$ is of polygonal/polyhedral or corona-shape since the corner or conic singularities are essentially needed in our mathematical argument. The mathematical arguments are based on microlally characterising the singularities in a quantitative manner of the solution $u$ to \eqref{eq:helm1} induced by the geometric singularities in $f(\mathbf x, u)$. Finally, we would like to emphasise that the results obtained in this paper include the relevant ones for linear mediums as special cases, and moreover our study indicates that the nonlinear effect can induce new phenomena that are of both theoretical and practical interest.

In summary, we list the major contributions of this work in what follows. 
\begin{enumerate}
\item We establish local and global uniqueness results in determining certain general nonlinear anomalies in several separate cases by minimal boundary measurements. These results are highly interesting, in particular in the following two aspects. First, to our best knowledge, this is first result in the literature concerning the shape determination of general nonlinear anomalies by a single measurement. The existing studies are mainly devoted to the determination of linear anomalies. Second, there are many existing studies on inverse problems for nonlinear differential equations, but most of them make use of infinitely many measurements. 

\item In achieving the results in (1), we need to impose ``strong" a-priori information on the target anomaly in that either its support or its physical content belongs to certain admissible classes. Nevertheless, on the one hand, these admissible classes are general enough to include some physically important cases, and on the other hand, they are good examples to verify that in the theory of inverse problems, the a-priori information can bring beneficial advantages to the inversion process. 

\item It is also worth noting that our results include many existing studies for linear anomalies as special cases. Moreover, they extend and generalise the relevant studies in that our results show that the nonlinearities can leverage certain technical restrictions in the linear counterpart and can help identify the anomalies; see Remark \ref{rem:26}  for more relevant discussion.

\end{enumerate}

The rest of the paper is organised as follows. In Section 2, we present the unique identifiability results for general anomalies including local uniqueness results with corner/conic singularities and a global unique result within polygonal/polyhedral or corona geometry. In Section 3, we present unique identifiability results for inverse problem \eqref{eq:ip2} with a single-layer structure. Section 4 is devoted to deriving unique identifiability results in determining layered anomalies.

\section{Determining supports of anomalous inclusions by a single measurement}

In this section, we consider the inverse boundary problem \eqref{eq:ip1} in determining the support of an anomalous inclusion independent of its physical content by a single boundary measurement.

\subsection{Local uniqueness results}

First, we introduce the geometric setup of our study. For a given point $\mathbf{x}_0\in\mathbb{R}^n$, $n=2,3$, we let $\mathbf{v}_0=\mathbf y_0-\mathbf x_0$ where $\mathbf y_0\in\mathbb{R}^n$ is fixed.  Set
       \begin{equation}\label{eq:cone1}
{\mathcal S}_{\mathbf{x}_0,\theta_0}:= \left\{\mathbf{y} \in \mathbb R^n ~|~0\leqslant \angle(\mathbf y-\mathbf{x}_0,\mathbf{v}_0)\leqslant \theta_0\right \}\ (\theta_0 \in(0,\pi/2)),
       \end{equation}
      which is a strictly convex conic cone with the apex $\mathbf x_0$ and an opening angle $2\theta_0 \in(0,\pi)$  in $\mathbb R^n$. Here $\mathbf v_0$ is referred to be the axis of $\mathcal C_{\mathbf x_0,\theta_0}$. Define the truncated conic cone as
       \begin{equation}\label{eq:cone2}
       \mathcal S^{h}_{\mathbf x_0,\theta_0}:=\mathcal S_{\mathbf x_0, \theta_0}\cap B_{h}(\mathbf x_0 ),
       \end{equation}
       where $B_{h}(\mathbf x_0)$ is an open ball centered at $\mathbf x_0$ with the radius $h\in \mathbb R_+$. When $n=2$, $\mathcal S^{h}_{\mathbf x_0}$ is a sectorial corner with the apex $\mathbf x_0$ and an opening angle $2\theta_0  \in (0,\pi)$. 
       

We also introduce a polyhedral corner in $\mathbb R^3$ as follows.  Assume that $\mathcal K_{\mathbf x_0;\mathbf e_1,\ldots, \mathbf e_\ell}$ is a polyhedral cone with the apex $\mathbf x_0$ and edges $\mathbf e_j$ ($j=1,\ldots, \ell$, $\ell\geq 3$), where $\mathbf e_j$, $j=1,2,\ldots \ell$ are mutually linearly independent vectors in $\mathbb{R}^3$. Throughout of this paper we always suppose that  $  \mathcal K_{\mathbf x_0;\mathbf e_1,\ldots, \mathbf e_\ell}$ is strictly convex, which implies that   it can be fitted  into a conic  cone $\mathcal S_{\mathbf x_0, \theta_0}$ with an opening angle $\theta_0\in (0,\pi/2)$, where $\mathcal S_{\mathbf x_0, \theta_0}$  is defined in \eqref{eq:cone1}.  Given a constant $h\in \mathbb R_+$, we define the truncated polyhedral corner $\mathcal K_{\mathbf x_0}^{h}$ as
     \begin{equation}\label{eq:kr0}
       \mathcal K_{\mathbf x_0}^{h}=\mathcal K_{\mathbf{x_0}; {\mathbf e_1},\ldots {\mathbf e_\ell }}\cap B_{h}(\mathbf x_0).
	\end{equation}


Throughout the rest of the paper, we denote 
\begin{equation}\label{eq:C_h def}
	\mathcal C_h:=  \mathcal S^{h}_{\mathbf x_0,\theta_0}\   \mbox{ or } \  \mathcal K^{h}_{\mathbf x_0}
\end{equation}
as a corner in $\mathbb R^n$ ($n=2,3$) with the apex $\mathbf x_0$, where $\mathcal S^{h}_{\mathbf x_0,\theta_0}$ and $\mathcal K^{h}_{\mathbf x_0}$ are defined in \eqref{eq:cone2} and \eqref{eq:kr0} respectively. The schematic illustration of a conic and polyhedral corner is displayed in Figure \ref{f2}. 


\begin{figure}[htbp]
	\centering
	\begin{minipage}{0.4\linewidth}
		\centering
		\includegraphics[width=0.35\linewidth]{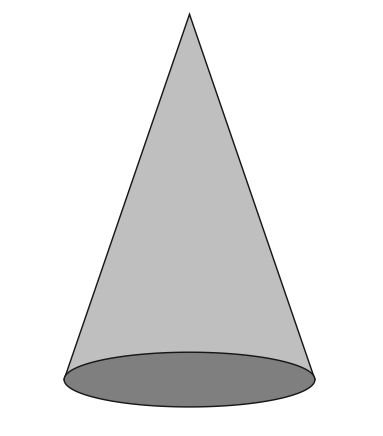}
	\end{minipage}
	\begin{minipage}{0.4\linewidth}
		\centering
		\includegraphics[width=0.4\linewidth]{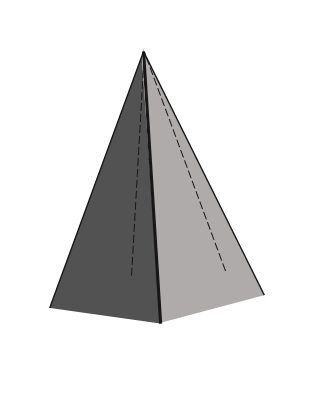}
	\end{minipage}
\caption{Illustrations of conic and polyhedral corner}\label{f2}
\end{figure}

\begin{lemma}
	\label{lem:cgoest}
Suppose that $\tau \in \mathbb R_+$ and $\mathcal C_h$ is defined in \eqref{eq:C_h def}. For $\mathbf x\in \mathbb R^n$ ($n=2,3$), let
\begin{equation}\label{eq:cgo}
u_0(\mathbf x)=e^{\tau(\mathbf d+\mathrm i  \mathbf d^\perp)\cdot \mathbf x},
\end{equation}
where $\mathbf d\cdot \mathbf d^\perp=0$ with $\mathbf d, \mathbf d^\perp \in \mathbb  S^{n-1}$,  then $\Delta u_0=0\ \mathrm{in}\  \mathbb R^n$. There exist unit vectors $\mathbf d,\ \mathbf d^\perp \in \mathbb S^{n-1}$ and   a positive number $\zeta$ depending on $\mathcal C_h$ satisfying 
\begin{equation}\label{eq:d cond}
	-1<\mathbf d\cdot \hat{\mathbf x}\leq -\zeta<0 \quad \mbox{for  all} \quad \mathbf x \in \mathcal C_h, \quad \mbox{and} \quad \mathbf d\cdot \mathbf d^\perp=0, 
\end{equation}
where $\hat{\mathbf x}=\frac{\mathbf x}{\vert \mathbf x\vert}$. 
Furthermore, for sufficient large $\tau$, it holds that
\begin{align}
\left \vert \int_{\mathcal C^h} u_0(\mathbf x)\mathrm d\mathbf x\right \vert & \geq  C_{\mathcal C_h} \tau^{-n}+\mathcal O\left(\tau^{-1}e^{-\frac{1}{2}\zeta h\tau}\right)\label{eq:conicest1},\\
\left\vert \int_{\mathcal C_h} \vert \mathbf x\vert ^\alpha u_0(\mathbf x)\mathrm d\mathbf x\right \vert &\lesssim \tau^{-(\alpha +n)}+\frac{1}{\tau}e^{-\frac{1}{2}\zeta h\tau},\quad \forall \alpha\in \mathbb R_+\label{eq:conicest2},\\
\|u_0\|_{H^1(\partial \mathcal C_h\cap \partial B_{h}(\mathbf x_0))}&\lesssim (2\tau^2 +1)^\frac{1}{2}e^{-\zeta h\tau},\label{eq:est 29} \\
\|\partial _\nu u_0\|_{L^2(\partial \mathcal C_h\cap \partial B_{h}(\mathbf x_0))}&\lesssim \sqrt 2\tau e^{-\zeta h\tau}, \label{eq:est 210}
\end{align}
where $ C_{\mathcal C_h} $ is a positive constant not depending on $\tau$. Here $``\lesssim"$ means that we neglect the generic constant $C$ associated with the principle  term  with respect to $\tau$ in the upper bounds of \eqref{eq:conicest2}, \eqref{eq:est 29} and  \eqref{eq:est 210} respectively, where $C$ does not depend on  $\tau$.  
\end{lemma}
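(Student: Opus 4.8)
The plan is to construct the complex geometric optics (CGO) solution $u_0$ explicitly and then estimate each of the four quantities by reducing everything to integrals of the exponential $e^{\tau \mathbf d\cdot\mathbf x}$ (times polynomial weights) over the truncated corner $\mathcal C_h$, and over its spherical cap $\partial\mathcal C_h\cap\partial B_h(\mathbf x_0)$. First I would verify the harmonicity: writing $\mathbf p=\mathbf d+\mathrm i\mathbf d^\perp$, one has $\mathbf p\cdot\mathbf p=|\mathbf d|^2-|\mathbf d^\perp|^2+2\mathrm i\,\mathbf d\cdot\mathbf d^\perp=0$ since $\mathbf d,\mathbf d^\perp\in\mathbb S^{n-1}$ and $\mathbf d\perp\mathbf d^\perp$, hence $\Delta u_0=\tau^2(\mathbf p\cdot\mathbf p)u_0=0$. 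Next I would choose the direction $\mathbf d$: by strict convexity of $\mathcal C_h$ (both the conic and polyhedral corners sit inside some $\mathcal S_{\mathbf x_0,\theta_0}$ with $\theta_0<\pi/2$), the cone of directions $\hat{\mathbf x}$ for $\mathbf x\in\mathcal C_h$ is contained in an open half-space; taking $\mathbf d$ to be (minus) the axis direction $\mathbf v_0/|\mathbf v_0|$ — or rather a small perturbation of it so that $\mathbf d\cdot\mathbf d^\perp=0$ can simultaneously be arranged with some fixed $\mathbf d^\perp$ — one gets $\mathbf d\cdot\hat{\mathbf x}\le-\cos\theta_0=:-\zeta<0$ uniformly, which is exactly \eqref{eq:d cond}. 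Without loss of generality I would place $\mathbf x_0$ at the origin so $|\mathbf x|$ and $\hat{\mathbf x}$ are unambiguous.

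For the lower bound \eqref{eq:conicest1}, I would pass to ``cone coordinates'': parametrise $\mathbf x=r\boldsymbol\omega$ with $r\in(0,h)$ and $\boldsymbol\omega$ ranging over the solid-angle patch $\Sigma\subset\mathbb S^{n-1}$ cut out by the corner. Then $|\int_{\mathcal C_h}u_0\,d\mathbf x|=|\int_\Sigma\int_0^h e^{\tau r\,\mathbf d\cdot\boldsymbol\omega}r^{n-1}\,dr\,dS(\boldsymbol\omega)|$. Since $\mathbf d\cdot\boldsymbol\omega=-\mu(\boldsymbol\omega)$ with $\mu\in[\zeta,1)$, the inner integral is $\int_0^h e^{-\tau\mu r}r^{n-1}dr=\mu^{-n}\tau^{-n}\Gamma(n)-(\text{tail from }r>h)$, and the tail is $O(\tau^{-1}e^{-\zeta h\tau})$ uniformly because $\mu\ge\zeta$; but one must be slightly careful that $u_0$ also has the imaginary phase $e^{\mathrm i\tau r\,\mathbf d^\perp\cdot\boldsymbol\omega}$, so the correct move is to first extend the $r$-integral to $(0,\infty)$ and compute $\int_0^\infty e^{\tau r\,\mathbf p\cdot\boldsymbol\omega}r^{n-1}dr=\Gamma(n)(-\tau\,\mathbf p\cdot\boldsymbol\omega)^{-n}$ exactly (valid since $\mathrm{Re}(\mathbf p\cdot\boldsymbol\omega)=\mathbf d\cdot\boldsymbol\omega\le-\zeta<0$), giving the leading term $\tau^{-n}\int_\Sigma\Gamma(n)(-\mathbf p\cdot\boldsymbol\omega)^{-n}dS(\boldsymbol\omega)$, whose modulus is a strictly positive constant $C_{\mathcal C_h}$ (the integrand is continuous and nonvanishing on the compact $\Sigma$, but one must argue the angular integral does not cancel — this is where one uses that $\mathrm{Re}\,\mathbf p\cdot\boldsymbol\omega$ has a fixed sign so $\mathrm{arg}(-\mathbf p\cdot\boldsymbol\omega)^{-n}$ stays in a window of width $<n\pi/2$... for $n=2,3$ this needs $\theta_0$ small enough, which is the role of $\zeta$). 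The remainder from truncating $r>h$ is bounded by $\int_\Sigma\int_h^\infty e^{-\zeta\tau r}r^{n-1}dr\,dS\lesssim\tau^{-1}e^{-\zeta h\tau}$ after absorbing lower powers of $r$, giving \eqref{eq:conicest1}. Estimate \eqref{eq:conicest2} is the same computation with the extra weight $r^\alpha$: $\int_0^\infty e^{-\zeta\tau r}r^{\alpha+n-1}dr=\Gamma(\alpha+n)(\zeta\tau)^{-(\alpha+n)}\lesssim\tau^{-(\alpha+n)}$, plus the truncation tail $\lesssim\tau^{-1}e^{-\zeta h\tau}$; here I only need an upper bound so no cancellation issue arises.

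For \eqref{eq:est 29} and \eqref{eq:est 210} I would restrict to the spherical cap $\Gamma_h:=\partial\mathcal C_h\cap\partial B_h(\mathbf x_0)$, where $|\mathbf x|=h$ exactly, so $|u_0(\mathbf x)|=e^{\tau\,\mathbf d\cdot\mathbf x}=e^{\tau h\,\mathbf d\cdot\hat{\mathbf x}}\le e^{-\zeta h\tau}$ pointwise. Since $\Gamma_h$ has bounded $(n-1)$-measure (a fixed constant depending on $\theta_0$ and $h$), the $L^2$ norm of $u_0$ over $\Gamma_h$ is $\lesssim e^{-\zeta h\tau}$; the gradient is $\nabla u_0=\tau(\mathbf d+\mathrm i\mathbf d^\perp)u_0$ with $|\nabla u_0|=\sqrt2\,\tau|u_0|$, so $\|\partial_\nu u_0\|_{L^2(\Gamma_h)}\le\|\nabla u_0\|_{L^2(\Gamma_h)}\lesssim\sqrt2\,\tau e^{-\zeta h\tau}$, which is \eqref{eq:est 210}; and $\|u_0\|_{H^1(\Gamma_h)}^2=\|u_0\|_{L^2}^2+\|\nabla_{\Gamma_h}u_0\|_{L^2}^2\lesssim(1+2\tau^2)e^{-2\zeta h\tau}$, giving \eqref{eq:est 29} upon taking square roots. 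The main obstacle, and the only genuinely delicate point, is establishing the \emph{non-cancellation} in the angular integral $\int_\Sigma(-\mathbf p\cdot\boldsymbol\omega)^{-n}\,dS(\boldsymbol\omega)$ that yields the strictly positive constant $C_{\mathcal C_h}$ in \eqref{eq:conicest1} — everything else is bookkeeping with incomplete Gamma-function tails. I would handle it by noting that, shrinking $\theta_0$ (equivalently increasing $\zeta$ toward $1$) if necessary, the complex numbers $-\mathbf p\cdot\boldsymbol\omega=\mu(\boldsymbol\omega)-\mathrm i\,\mathbf d^\perp\cdot\boldsymbol\omega$ all lie in a narrow sector about the positive real axis, so their $(-n)$-th powers lie in a sector of aperture $<\pi$ and cannot sum to zero; in the conic case with $\mathbf d^\perp$ chosen orthogonal to the axis plane one can even make the imaginary parts contribute symmetrically and extract the real part directly.
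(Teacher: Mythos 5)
Your overall strategy coincides with the paper's: verify harmonicity from $(\mathbf d+\mathrm i\mathbf d^\perp)\cdot(\mathbf d+\mathrm i\mathbf d^\perp)=0$, choose $\mathbf d$ opposite the cone axis to get \eqref{eq:d cond}, pass to cone coordinates, extend the radial integral to $(0,\infty)$ to produce the exact $\Gamma(n)\tau^{-n}$ Laplace-transform leading term, bound the truncation tail by an exponentially small quantity, and prove \eqref{eq:est 29}--\eqref{eq:est 210} from the pointwise bound $|u_0|\le e^{-\zeta h\tau}$ on the spherical cap together with $|\nabla u_0|=\sqrt2\,\tau|u_0|$. All of that is correct and is exactly the paper's bookkeeping (the paper splits into the 2D sectorial, 3D conic and 3D polyhedral cases, but the computations are the ones you describe).

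The one substantive issue is the step you yourself single out: the non-vanishing of the angular integral $\int_\Sigma(-\mathbf p\cdot\boldsymbol\omega)^{-n}\,dS(\boldsymbol\omega)$. Your primary argument — that all values $(-\mathbf p\cdot\boldsymbol\omega)^{-n}$ lie in a sector of aperture $<\pi$ — requires $n\theta_0<\pi/2$ (since $|\arg(-\mathbf p\cdot\boldsymbol\omega)|\le\theta_0$ with the natural choice of $\mathbf d$), i.e.\ $\theta_0<\pi/4$ in 2D and $\theta_0<\pi/6$ in 3D, whereas the lemma allows any $\theta_0\in(0,\pi/2)$. Your escape hatch of ``shrinking $\theta_0$'' is not available: $\theta_0$ is the opening half-angle of the given corner and the integral is over all of $\mathcal C_h$, not over a sub-cone. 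The correct repair is your secondary suggestion, made explicit: with $\mathbf d$ along the negative axis and $\mathbf d^\perp$ in the plane of the sector one computes in 2D that $-\mathbf p\cdot\boldsymbol\omega=e^{-\mathrm i\theta}$ up to sign, so
\begin{equation*}
\int_{-\theta_0}^{\theta_0}\bigl(-\mathbf p\cdot\boldsymbol\omega\bigr)^{-2}\,\mathrm d\theta=\int_{-\theta_0}^{\theta_0}e^{2\mathrm i\theta}\,\mathrm d\theta=\sin 2\theta_0>0
\end{equation*}
for every $\theta_0\in(0,\pi/2)$, and an analogous explicit evaluation works for the 3D cone by symmetry in the azimuthal variable. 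You should carry this computation out rather than appeal to the sector argument. (For what it is worth, the paper's own treatment of this point — an ``integral mean value theorem'' applied to a complex-valued integrand, yielding the lower bound $(\theta_M-\theta_m)/2$ — is not a valid theorem and in fact overstates the constant, which degenerates like $\sin2\theta_0$ as $\theta_0\to\pi/2$; so your instinct that this is the only genuinely delicate point is exactly right, and the explicit/symmetric computation is the way to close it.) Everything else in your proposal is sound; the discrepancy between your tail exponent $e^{-\zeta h\tau}$ and the paper's $e^{-\zeta h\tau/2}$ merely reflects absorbing the polynomial factor $r^{n-1}$ into half of the exponential and is immaterial.
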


\begin{proof}
Since $\mathbf d\perp \mathbf d^\perp$, one knows that $\Delta  u_0=0$ in $\mathbb R^n$. Without loss of generality, in the following we assume that {the apex $\mathbf x_0$ of $\mathcal C_h$} is the origin. In view of the convexity of $\mathcal C_h$ defined \eqref{eq:C_h def}, there exists a vector $\mathbf d\in \mathbb S^{n-1}$ satisfying  \eqref{eq:d cond}.  

	{In what follows, we only prove the cases that $\mathcal C_h$ is a sectorial corner in  $\mathbb R^2$ and $\mathcal C_h$ is a conic corner in $\mathbb R^3$ respectively. The case that $\mathcal C_h$ is a polyhedral corner in $\mathbb R^3$ can be proved similarly and we only remark it at the end of the proof.   }
	
For a fixed $\alpha  \in \mathbb R_+$, if $\Re{\mu}\geq 2\alpha/e$, where $\mu \in \mathbb C$, it yields that $r^\alpha\leq e^{\Re \mu r/2}$. Hence we have
	\begin{equation}\label{eq:int 29}
	\left | \int_{\varepsilon}^\infty r^\alpha e^ {-\mu r}\mathrm d\mathrm r\right |
	\leq \int_{\varepsilon}^\infty e^{-\Re{\mu}r/2}\mathrm d\mathrm r=\frac{2}{\Re{\mu}}e^{-\Re{\mu}\epsilon /2}.
	\end{equation}
	where  $\varepsilon \in \mathbb R_+$ is fixed. Using Laplace transform, one can derive that
	\begin{equation}\label{eq:estgam}
\int_{0}^\varepsilon r^\alpha e^{-\mu r}\mathrm d\mathrm r
=\frac{\Gamma(\alpha+1)}{\mu^{\alpha +1}}+\int_{\varepsilon}^\infty r^\alpha e^ {-\mu r}\mathrm d\mathrm r,
\end{equation}
	where $\Gamma$ is the Gamma function.
\medskip

{\noindent {\bf Case 1:} 	$\mathcal C_h$ is a sectorial  corner. }  Write  $\mathbf x=(x_1,x_2)\in\mathbb  R^2$  in the polar coordinates as $\mathbf x=(r\cos\theta,r\sin \theta )$, where $r\geq 0$ and $\theta \in [0,2\pi )$. Let $\Gamma_h^\pm$ be two edges  of  $\mathcal C_h$. Set 
$$
\Gamma_h^+=\{\mathbf x\in \mathbb R^2~|~\mathbf x=r(\cos \theta_M,\sin \theta_M )\},\quad \Gamma_h^-=\{\mathbf x\in \mathbb R^2~|~\mathbf x=r(\cos \theta_m,\sin \theta_m )\},
$$
where $r\in [0,h]$ with $h\in \mathbb R_+$, $\theta_m,\theta_M\in [0,2\pi)$ and $\theta_M-\theta_m=2\theta_0$. Here $2\theta_0$ is the opening angle  of $\mathcal C_h$, where $\theta_0\in (0,\pi/2)$.  Using the polar-coordinate transformation and \eqref{eq:estgam}, it can be obtained that
	\begin{equation}
	\begin{aligned}
	\int_{\mathcal C_h} e^{\rho \cdot \mathbf x}\mathrm d \mathbf x &= \int_{\mathcal C_h} e^{-\tau({\mathbf d}+ {\mathrm i \mathbf{d}^\perp)\cdot\mathbf {\hat x}}}\mathrm d \mathbf x 
	=\frac{\Gamma(2)}{\tau^2}\int_{\theta_{m}}^{\theta_M}\frac{1}{\left(\mathbf d\cdot \mathbf{\hat x}+\mathrm i \mathbf{d}^\perp \cdot \mathbf{\hat x} \right)^2}\mathrm d \theta-\int_{\theta_{m}}^{\theta_M}I_{r_1}\mathrm d\theta,
	\end{aligned}
	\end{equation}
	where $I_{r_1}= \int_{h}^{\infty}e^{-\tau (\mathbf d +\mathrm i\mathbf d)\cdot \hat{\mathbf x}r}r\mathrm d r$.
	Hence, it can be directly calculated that
	\begin{equation}
	\left\vert \int_{\theta_{m}}^{\theta_M}\frac{1}{\left(\mathbf d\cdot \mathbf{\hat x}+\mathrm i \mathbf{d}^\perp \cdot \mathbf{\hat x} \right)^2}\mathrm d \theta\right\vert =\frac{\theta_{M}-\theta_{m}}{\left| \mathbf d \cdot \mathbf{\hat x}(\theta_{\xi})+ \mathrm i \mathbf{d}^\perp \cdot \mathbf{\hat x} (\theta_{\xi})\right|^2} \geq \frac{\theta_M-\theta_m }{2}
	\end{equation}
	by using the integral mean value theorem.
	For sufficiently large $\tau$, according to \eqref{eq:d cond}  and \eqref{eq:int 29}, we have the following integral inequality
	\begin{equation}\label{1eq:eta0tau2}
	\begin{aligned}
	\left\vert \int_{\mathcal C_h} e^{\rho \cdot \mathbf x}\mathrm d \mathbf x  \right\vert
	&\geq \frac{\Gamma(2)(\theta_{M}-\theta_{m})}{2\tau^2} - \left\vert \int_{\theta_{m}}^{\theta_M} I_{\sf{R}} \mathrm d\mathbf \theta \right\vert
	\geq  \frac{C_{\mathcal C_h }}{\tau^2}-\frac{2}{\zeta\tau}e^{-\frac{1}{2}\zeta h \tau}.
	\end{aligned}
	\end{equation}
Therefore, we prove \eqref{eq:conicest1} for $n=2$,  where $C_{\mathcal C_h}=\theta_M-\theta_m=2\theta_0$ in \eqref{eq:conicest1}.

By adopting  a similar argument for  \eqref{eq:conicest1} when $\mathcal C_h$ is a sectorial corner, for \eqref{eq:conicest2} we have
	\begin{equation}\notag 
	\int_{\mathcal C_h}\vert \mathbf x\vert ^\alpha u_0\mathrm d\mathbf x=\frac{\Gamma(\alpha +2)}{\tau^{\alpha +2}}\int_{\theta_m}^{\theta_M}\left(\frac{1}{(\mathbf d\cdot \hat{\mathbf x}+\mathrm i\mathbf d^\perp\cdot \hat{\mathbf x})^2} +I_{R}\right) \mathrm d\theta,
	\end{equation}
	where we utilize \eqref{eq:estgam} and $
	I_R=\int_{h}^\infty r^{\alpha+2}e^{r(\tau \mathbf d\cdot \hat{\mathbf x}+\mathrm i\mathbf d^\perp\cdot \hat{\mathbf x})}\mathrm d r.
	$ Using \eqref{eq:int 29}, we can prove \eqref{eq:conicest2}.

By using the polar-coordinate transformation and \eqref{eq:d cond}, we have the following inequality:
\begin{align}\label{eq:214 u_0}
\|u_0\|_{L^2(\partial \mathcal C_h\cap \partial B_{h}(\mathbf 0))}
&=\left (\int_{\theta_m}^{\theta_M}e^{2h\tau \mathbf d\cdot \hat{\mathbf x}} \mathrm d \theta \right )^\frac{1}{2}\leq (\theta_M-\theta_m)^\frac{1}{2}e^{-\zeta h\tau}.
\end{align}
In view of \eqref{eq:cgo} and \eqref{eq:214 u_0}, one can directly verify that
\begin{equation}\notag
\begin{aligned}
\|u_0\|_{H^1(\partial \mathcal C_h\cap \partial B_{h}(\mathbf 0))}
&=\left(\|u_0\|^2_{L^{2}(\partial \mathcal C_h\cap \partial B_{h}(\mathbf 0))}+\|\tau(\mathbf d+\mathrm i\mathbf d^\perp) u_0\|^2_{L^{2}(\partial \mathcal C_h\cap \partial B_{h}(\mathbf 0))}\right)^{\frac{1}{2}}\\
&\leq (2\tau^2+1)^\frac{1}{2}\|u_0\|_{L^2(\partial \mathcal C_h\cap \partial B_{h}(\mathbf 0))}\lesssim (2\tau^2 +1)^\frac{1}{2}e^{-\zeta h\tau}, \ n=2,3. 
\end{aligned}
\end{equation}
Furthermore, by virtue  of \eqref{eq:d cond} and Cauchy-Schwarz inequality, it yields that
\begin{equation}\notag
\begin{aligned}
\|\partial _\nu u_0\|_{L^2(\partial \mathcal C_h\cap \partial B_{h}(\mathbf 0))}&
\leq \|\nabla u_0\|_{L^2(\partial \mathcal C_h\cap \partial B_{h}(\mathbf 0))}\lesssim \sqrt 2\tau e^{-\zeta h\tau}. 
\end{aligned}
\end{equation}
Therefore we obtain the estimates  \eqref{eq:est 29} and  \eqref{eq:est 210} as $\tau\rightarrow \infty$,  respectively. 
\medskip

{\noindent {\bf Case 2:} 	$\mathcal C_h$ is a conic corner in $\mathbb R^3$. } Recall that $\mathcal C_h$ has the opening angle $2\theta_0$, which is defined in \eqref{eq:cone1}.  By virtue of \eqref{eq:estgam}, it yields that
	\begin{equation}\notag
	\int_{\mathcal C^h} e^{\tau (\mathbf d+\mathrm i \mathbf d^\perp)\cdot \mathbf x}\mathrm d\mathbf x=I_1+  \int_{0}^{2\pi}\mathrm d\varphi\int_{0}^{\theta_0}I_{r_2}\sin \theta\mathrm d\theta,
	\end{equation}
	where 
	$$
	I_1=\int_{0}^{2\pi}\int_{0}^{\theta_0}\left(\frac{\Gamma(3)}{\tau^3(\mathbf d\cdot \hat{\mathbf x}+\mathrm i\mathbf d^\perp \cdot \hat{\mathbf x})^3}\right)\sin \theta\mathrm d\varphi\mathrm d\theta,\quad I_{r_2}=\int_{h}^\infty r^2e^{r\tau( \mathbf d\cdot \hat{\mathbf x}+\mathrm i\mathbf d^\perp \cdot \hat{\mathbf x})}\mathrm d r.
	$$
By the
 integral mean value theorem, one can arrive at that
	\begin{equation}\label{eq:I1 214}
		\begin{aligned}
	I_1
	&=\frac{\Gamma(3)}{\tau^3}\int_{0}^{2\pi}\frac{1}{\left(\mathbf d\cdot \hat{\mathbf x}(\varphi,\theta_\xi)+\mathrm i\mathbf d^\perp \cdot \hat{\mathbf x}(\varphi,\theta_\xi)\right)^3}\mathrm d\varphi\int_{0}^{\theta_0}\sin \theta_0\mathrm d\theta\\
	&=\frac{2\pi\Gamma(3)(1-\cos \theta_0)}{\tau^3}\frac{1}{\left(\mathbf d\cdot \hat{\mathbf x}(\varphi_\xi,\theta_\xi)+\mathrm i\mathbf d^\perp \cdot \hat{\mathbf x}(\varphi_\xi,\theta_\xi)\right)^3}.
	\end{aligned}
	\end{equation}
For sufficient large $\tau$, from \eqref{eq:int 29}, one has
	\begin{equation}\notag
	\vert I_{r_2}\vert =\left|\int_{h}^\infty r^2e^{r\tau( \mathbf d\cdot \hat{\mathbf x}+\mathrm i\mathbf d^\perp \cdot \hat{\mathbf x})}\mathrm d \mathrm r\right|\leq \frac{2}{\zeta\tau}e^{-\frac{1}{2}\zeta h\tau},
	\end{equation}
which implies that
	\begin{equation}\label{eq:Ir2}
	\left \vert \int_{0}^{2\pi}\mathrm d\varphi \int_{0}^{\theta_0}I_{r_2}\sin\theta\mathrm d\theta\right\vert \leq\int_{0}^{2\pi}\int_{0}^{\theta_0}\vert I_{r_2}\vert \mathrm d\varphi\mathrm d\theta
	\leq\frac{4\pi \theta_0}{\zeta\tau}e^{-\frac{1}{2}\zeta h\tau}. 
	\end{equation}
	From \eqref{eq:I1 214} and \eqref{eq:Ir2},  using  Cauchy-Schwarz inequality and \eqref{eq:d cond}, one can prove \eqref{eq:conicest1} for the case that  $\mathcal C_h$ is a conic corner, where $C_{\mathcal C_h}=\sqrt{2}\pi(1-\cos\theta_0)$ in \eqref{eq:conicest1} .

For \eqref{eq:conicest2}, from \eqref{eq:estgam}, it yields that
	\begin{equation}\notag
	\begin{aligned}
	\int_{\mathcal C^h}| \mathbf x|^\alpha u_0\mathrm d\mathbf x&=\frac{\Gamma(n+3)}{\tau^{n+3}}\int_{0}^{2\pi}\mathrm d\varphi\int_{0}^{\theta_0}\left(\frac{1}{(\mathbf d\cdot \hat{\mathbf x}+\mathrm i\mathbf d^\perp\cdot \hat{\mathbf x})^3} +I_R\right)\sin\theta\mathrm d\theta,
	\end{aligned}
	\end{equation}
	where
	$
	I_R=\int_{h}^\infty r^{\alpha+3}e^{r(\tau \mathbf d\cdot \hat{\mathbf x}+\mathrm i\mathbf d^\perp\cdot \hat{\mathbf x})}\mathrm d r	$. In view  of \eqref{eq:int 29} we obtain \eqref{eq:conicest2}.
	
	For \eqref{eq:est 29}, according to  polar coordinate transformation and \eqref{eq:d cond}, one has
\begin{align}\notag
\|u_0\|_{L^2(\partial \mathcal C_h\cap \partial B_{h}(\mathbf 0))}&=\left (\int_{0}^{\theta_0}\int_{0}^{2\pi}e^{2h\tau \mathbf d\cdot \hat{\mathbf x}}\mathrm d\varphi\mathrm d\theta\right )^\frac{1}{2}
\leq (2\pi \theta_0)^\frac{1}{2}e^{-\zeta h\tau},
\end{align}
which  can be  used to derive \eqref{eq:est 29} immediately. Similarly, using  \eqref{eq:d cond} and Cauchy-Schwarz inequality, one can  show that \eqref{eq:est 210} is valid  for the case that $\mathcal  C_h$ is a conic corner. 

Finally, the case that $\mathcal C_h$ is a polyhedral corner in $\mathbb R^3$ can be proved in a similar manner; see also \cite[Lemma 3.4]{BL3}. 
	
	The proof is complete. 
\end{proof}

A main auxiliary theorem is given as follows.

\begin{theorem}\label{thm:1}
Let $(D; f)\in\mathscr{A}$ and $\mathcal{C}_h$ be a corner. Consider the following system of differential equations for $u\in H_{loc}^2(\mathcal{C}_h)$ and $v\in H_{loc}^2(\mathcal{C}_h)$:
\begin{equation}\label{eq:a1}
\begin{cases}
\Delta u+f(\mathbf x, u)=0 &\quad \mbox{in}\ \ \mathcal{C}_h,\\
\Delta v+\lambda v=0 & \quad\mbox{in}\ \ \mathcal{C}_h,\\
u=v,\quad \partial_\nu u=\partial_\nu v & \quad\mbox{on}\ \ \partial\mathcal{C}_h\backslash \partial B_h ,
\end{cases}
\end{equation}
where $\nu$ is the exterior unit normal vector to $\partial D$. Then one has
\begin{equation}\label{eq:a2}
 \lambda u(\mathbf  x_ 0)-f(\mathbf x_0, u(\mathbf x_0))=0,
\end{equation}
where $\mathbf x_0$ is the apex of $\mathcal{C}_h$.
\end{theorem}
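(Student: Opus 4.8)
The plan is to use the Complex Geometric Optics (CGO) solutions $u_0$ constructed in Lemma~\ref{lem:cgoest} as test functions, and to extract the pointwise identity \eqref{eq:a2} by balancing the leading-order asymptotics in $\tau$ of an integral identity obtained from Green's formula. First I would take the difference $w=u-v$, which by \eqref{eq:a1} satisfies $\Delta w = -f(\mathbf x,u)+\lambda v = -\big(f(\mathbf x,u)-\lambda u\big)+\lambda w$ in $\mathcal C_h$, with $w=\partial_\nu w=0$ on $\partial\mathcal C_h\setminus\partial B_h$. Multiplying by $u_0$ (which is harmonic), integrating over $\mathcal C_h$, and integrating by parts twice, all boundary contributions on the flat faces $\partial\mathcal C_h\setminus\partial B_h$ vanish because of the homogeneous Cauchy data for $w$, leaving only the spherical cap $\Gamma_h:=\partial\mathcal C_h\cap\partial B_h(\mathbf x_0)$. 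This yields an identity of the schematic form
\begin{equation}\notag
\int_{\mathcal C_h}\big(f(\mathbf x,u)-\lambda u\big)\,u_0\,\mathrm d\mathbf x = \int_{\Gamma_h}\big(u_0\,\partial_\nu w - w\,\partial_\nu u_0\big)\,\mathrm d S + \lambda\int_{\mathcal C_h} w\,u_0\,\mathrm d\mathbf x .
\end{equation}

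Next I would estimate each term as $\tau\to\infty$. The boundary term over $\Gamma_h$ is controlled by $\|u_0\|_{H^1(\Gamma_h)}$ and $\|\partial_\nu u_0\|_{L^2(\Gamma_h)}$ together with the (finite, $\tau$-independent) $H^2(\mathcal C_h)$-norm of $w$ and a trace estimate; by \eqref{eq:est 29}--\eqref{eq:est 210} it decays exponentially like $\tau^2 e^{-\zeta h\tau}$, hence is negligible. For the volume term on the right, since $w\in H^2_{loc}(\mathcal C_h)$ with $w(\mathbf x_0)=0$ (as $w$ and $\partial_\nu w$ vanish on the two faces meeting at $\mathbf x_0$), one has $|w(\mathbf x)|\lesssim |\mathbf x-\mathbf x_0|$ near the apex, so by \eqref{eq:conicest2} with $\alpha=1$ this term is $O(\tau^{-(n+1)})+$ (exp. small), i.e. of lower order than the expected leading term $\tau^{-n}$. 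It remains to analyze the left-hand side: write $f(\mathbf x,u)-\lambda u = \big(f(\mathbf x_0,u(\mathbf x_0))-\lambda u(\mathbf x_0)\big) + \big[(f(\mathbf x,u(\mathbf x))-\lambda u(\mathbf x)) - (f(\mathbf x_0,u(\mathbf x_0))-\lambda u(\mathbf x_0))\big]$. Using the $C^\gamma$-continuity of $f$, the $C^1$-dependence on $u$, and the Hölder continuity of $u\in H^2_{loc}\hookrightarrow C^{0,\sigma}$ near $\mathbf x_0$, the bracketed remainder is $O(|\mathbf x-\mathbf x_0|^{\min(\gamma,\sigma)})$, so its integral against $u_0$ is again $O(\tau^{-(n+\min(\gamma,\sigma))})$ plus exp. small by \eqref{eq:conicest2}. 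The constant piece contributes $\big(f(\mathbf x_0,u(\mathbf x_0))-\lambda u(\mathbf x_0)\big)\int_{\mathcal C_h}u_0\,\mathrm d\mathbf x$, whose modulus is bounded below by $c_{\mathcal C_h}\tau^{-n}$ for large $\tau$ thanks to \eqref{eq:conicest1}.

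Putting it together: dividing the whole identity by $\tau^{-n}$ and letting $\tau\to\infty$, every term vanishes except the constant piece, forcing $|f(\mathbf x_0,u(\mathbf x_0))-\lambda u(\mathbf x_0)|\cdot C_{\mathcal C_h}=0$, which gives \eqref{eq:a2}. The main obstacle I anticipate is the rigorous justification that $w$ (and the Hölder modulus of $u$) behave as claimed near the apex — in particular establishing the linear vanishing $|w(\mathbf x)|\lesssim|\mathbf x-\mathbf x_0|$ from the homogeneous Cauchy data on the non-smooth piece $\partial\mathcal C_h\setminus\partial B_h$, and controlling all constants uniformly so the exponents $n+\gamma$, $n+\sigma$, $n+1$ strictly beat $n$; this requires care with interior elliptic regularity up to the flat faces (away from the apex) and a careful choice of the extension/cutoff so that the regularity of $f(\mathbf x,u)\in L^2$ alone suffices to place $u,v\in H^2_{loc}$, which is exactly the hypothesis in the statement.
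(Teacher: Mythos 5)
Your proposal is correct and follows essentially the same route as the paper: pair the equation for $w=u-v$ with the harmonic CGO $u_0$ via Green's formula, kill the flat-face boundary terms with the homogeneous Cauchy data, split the volume integrand into its value at the apex plus a H\"older remainder, and then use \eqref{eq:conicest1}--\eqref{eq:est 210} to isolate the $\tau^{-n}$ leading term. The one obstacle you flag is a non-issue: the paper never needs the linear vanishing $|w(\mathbf x)|\lesssim|\mathbf x-\mathbf x_0|$, only the H\"older rate $|\mathbf x-\mathbf x_0|^{\alpha}$ with $\alpha=\min\{\beta,\gamma\}>0$ coming from $H^2\hookrightarrow C^{0,\beta}$ together with $v(\mathbf x_0)=u(\mathbf x_0)$ from the transmission condition, which already beats $\tau^{-n}$ by \eqref{eq:conicest2}.
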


\begin{proof} 
Since $\Delta$ is invariant under rigid motions, without loss of generality, we assume that the apex $\mathbf x_0$ of $\mathcal C_h$ coincides with the origin.  By virtue of Green's formula and \eqref{eq:a1}, we have the following integral identity: 
\begin{equation}\label{eq:cgogreen}
\int_{\mathcal C_h} (\lambda v-f(\mathbf x,u))u_0\mathrm d \mathbf x =\int_{\partial C_h\cap \partial B_h(\mathbf 0) }u_0\partial_\nu(u-v)-(u-v)\partial_\nu u_0\mathrm d\sigma,
\end{equation}
where $u_0$ is defined in \eqref{eq:cgo}. 
According to Sobolev's embedding theorem,  we have  $u,v\in C^\beta (\mathcal C_h)$ ($\beta\in (0,1]$ for $n=2$ and $\beta\in (0,1/2]$ for $n=3$) since $u,v\in H^2(\mathcal C_h)$. By further using the H\"older continuity of $f(x,\cdot)$ and the transmission conditions, we can derive the following expansions:
\begin{align}
F(\mathbf x):=\lambda v-f(\mathbf x,u)&=\lambda u(\mathbf 0)-f(\mathbf 0,u(\mathbf 0))+\delta_{v}(\mathbf x)+\delta_{f(\mathbf x,u)}(\mathbf x)  \notag,\\
 \vert \delta_{f(\mathbf x,u)}(\mathbf x) \vert &\leq \| f(\mathbf x,u)\|_{C^\alpha(\mathcal C_h)}\vert \mathbf x\vert ^\alpha,\quad \vert \delta_{v}(\mathbf x) \vert \leq \| v\|_{C^\alpha(\mathcal C_h)}\vert \mathbf x\vert ^\alpha  \label{eq:alphaex},
\end{align}
where $\alpha=\min\{\beta,\gamma\} \in (0, 1)$ depending on the H\"older indices $\gamma$ and $\beta$. 

Combining \eqref{eq:alphaex} with \eqref{eq:cgogreen}, one can show that
\begin{align}
(\lambda u (\mathbf 0)-f(\mathbf 0,u(\mathbf 0)))\int_{\mathcal C_h}u_0(\mathbf x)\mathrm d\mathbf x
&=-\int_{\mathcal C_h}(\delta_v(\mathbf x)+\delta_{f(\mathbf x,u)}) u_0\mathrm d\mathbf x \notag\\
&+\int_{\partial C_h\cap \partial B_h(\mathbf 0)}u_0\partial_\nu(u-v)-(u-v)\partial_\nu u_0\mathrm d\sigma \label{eq:I}.
\end{align}
By virtue of \eqref{eq:alphaex} and \eqref{eq:conicest2}, one has
\begin{equation}
\begin{aligned}
\left\vert \int_{\mathcal C_h}(\delta_v(\mathbf x)+\delta_{f(\mathbf x,u)}) u_0\mathrm d\mathbf x \right\vert
&\leq (\|v\|_{C^\alpha(\mathcal C_h)}+\|f\|_{C^\alpha(\mathcal C_h)})\int_{\mathcal C_h}\vert \mathbf x\vert^\alpha |u_0|\mathrm d\mathbf x\\
&\lesssim \tau^{-(\alpha +n)}+\frac{1}{\tau}e^{-\frac{1}{2}\zeta h\tau},\ n=2,3.
\end{aligned}
\end{equation}

According to  the trace theorem and the fact that $u,v\in H^1(\mathcal C_h)$, from \eqref{eq:est 29} and \eqref{eq:est 210},  we can deduce that 
\begin{align}
\left \vert\int_{\partial C_h\cap \partial B_h(\mathbf 0)}u_0\partial_\nu(u-v)\mathrm d\sigma \right \vert &\leq \|u_{0}\|_{H^{\frac{1}{2}}(\partial C_h\cap \partial B_h(\mathbf 0))} \|\partial_{\nu}(u-v)\|_{H^{-\frac{1}{2}}(\partial C_h\cap\partial B_h(\mathbf 0))}\notag
\\
&\leq C\|u_0\|_{H^1(\partial \mathcal C_h\cap\partial B_h(\mathbf 0))}\|u-v\|_{H^1(C_h)}\notag\\
&\lesssim (2\tau^2 +1)^\frac{1}{2}e^{-\zeta h\tau},\label{eq:J1}\\
\left \vert \int_{\partial C_h\cap\partial B_h(\mathbf 0)}(u-v)\partial_\nu u_0\mathrm d\sigma \right \vert &\leq \|\partial _{\nu} u_0\|_{ L^{2}(\partial C_h\cap \partial B_h(\mathbf 0))} \|u-v\|_{ L^{2}(\partial C_h\cap \partial B_h(\mathbf 0))}\notag \\
&\leq C\|u-v\|_{H^1(\mathcal C_h)}\|\partial _\nu u_0\|_{L^2(\partial \mathcal C_h\cap \partial B_h(\mathbf 0))},
\notag\\
&\leq\sqrt 2C \tau e^{-\zeta h\tau},\label{eq:J2}
\end{align}
as $\tau \rightarrow \infty$, where $C$ is a generic constant originating from the trace theorem.

 Substituting \eqref{eq:conicest1}, \eqref{eq:conicest2}, \eqref{eq:J1} and \eqref{eq:J2} into \eqref{eq:I}, one has
 \begin{equation}\label{eq:u0f0}
 \left( C_{\mathcal C_h} \tau^{-n} +\mathcal O\left(\tau^{-1}e^{-\frac{1}{2}\zeta h\tau} \right) \right)\vert \lambda u(\mathbf 0)-f(\mathbf 0,u(\mathbf 0))\vert \lesssim \tau^{-(\alpha +n)}+(1+\tau)e^{-\zeta h\tau}+\frac{1}{\tau}e^{-\frac{1}{2}\zeta h\tau}
 \end{equation}
as $\tau\to \infty$. Multiplying $\tau^n$ on both sides of \eqref{eq:u0f0} and letting $\tau \to \infty$, then we can derive \eqref{eq:a2}. We complete the proof of Theorem \ref{thm:1}.
\end{proof}

We can show a local unique recovery result for the inverse problem \eqref{eq:ip1}. Before that, we introduce an admissibility condition for $\psi$.

\medskip

\noindent{\bf Assumption A.}~We say that $\psi\in H^{1/2}(\partial\Omega)$ is admissible and write $\psi\in \mathscr{B}$ if the solution to \eqref{eq:helm1} fulfills:
\begin{equation}\label{eq:a3}
\lambda u(\mathbf x_c)-f(\mathbf x, u(\mathbf x_c))\neq 0,\quad \mbox{or} \quad \lambda u(\mathbf x)-f(\mathbf x, u(\mathbf x))\neq 0,\quad \forall \mathbf x\in \Omega \backslash \overline{D}, 
\end{equation}
where $\mathbf x_c \in \partial D$ satisfies  $D\cap B_h(\mathbf x_c)=\mathcal C_h$ defined in \eqref{eq:C_h def} for a sufficient small $h\in \mathbb R_+$. 

It is emphasised that in Section~\ref{sect:5}, we shall show that Assumption A can hold in a certain generic scenario of practical interest. 

\begin{theorem}\label{thm:2}
Let $(D_j; f_j)\in\mathscr{A}$, $j=1,2$, and suppose that
\begin{equation}\label{eq:a4}
\Lambda_{D_1, f_1}(\psi)=\Lambda_{D_2, f_2}(\psi)\quad\mbox{for a fixed $\psi\in \mathscr{B}$}.
\end{equation}
Then $D_1\Delta D_2$ cannot possess a corner on $\partial \mathbf{G}$, where $\mathbf{G}$ is the connected component of $\Omega\backslash\overline{D_1\cup D_2}$ that connects to $\partial \Omega$.
\end{theorem}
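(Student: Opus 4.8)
The plan is to argue by contradiction: assume $D_1\Delta D_2$ has a corner on $\partial\mathbf{G}$, locate this corner precisely, and then use Theorem~\ref{thm:1} together with the admissibility Assumption~A to derive a contradiction. First I would set up the geometry. Since $\Lambda_{D_1,f_1}(\psi)=\Lambda_{D_2,f_2}(\psi)$ and $\mathbf{G}$ is the connected component of $\Omega\backslash\overline{D_1\cup D_2}$ abutting $\partial\Omega$, unique continuation for the equation $\Delta w+\lambda w=0$ (which both solutions satisfy on $\Omega\backslash\overline{D_1}$ and $\Omega\backslash\overline{D_2}$ respectively, since $a(\mathbf x,u)=\lambda u$ there) gives $u_1=u_2$ in $\mathbf{G}$; here I use that $\Omega\backslash\overline{D_1\cup D_2}$ is connected to $\partial\Omega$ and that the Cauchy data agree on $\partial\Omega$. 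Write $u:=u_1$ and $v:=u_2$ for the two solutions. A corner of $D_1\Delta D_2$ on $\partial\mathbf{G}$ is, without loss of generality, a corner $\mathbf x_c$ of (say) $D_1$ with $\mathbf x_c\notin\overline{D_2}$, so that in a small ball $B_h(\mathbf x_c)$ one has $\mathcal C_h:=D_1\cap B_h(\mathbf x_c)\subset\mathbf{G}\cup\{\text{part of }D_1\}$, with $B_h(\mathbf x_c)\cap D_2=\emptyset$, and $\mathcal C_h$ is a corner in the sense of \eqref{eq:C_h def}.

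Next I would verify the hypotheses of Theorem~\ref{thm:1} locally at $\mathbf x_c$. On $\mathcal C_h$ the solution $u=u_1$ satisfies $\Delta u+f_1(\mathbf x,u)=0$ because $\mathcal C_h\subset D_1$. On the same set $\mathcal C_h$, since $B_h(\mathbf x_c)\cap D_2=\emptyset$, the solution $v=u_2$ satisfies $\Delta v+\lambda v=0$. On the portion of the boundary $\partial\mathcal C_h\backslash\partial B_h(\mathbf x_c)$ — which lies in $\mathbf{G}$ — we have $u=u_1=u_2=v$ and $\partial_\nu u=\partial_\nu v$ by the unique continuation identity $u_1\equiv u_2$ in $\mathbf G$ established above (the regularity $u,v\in H^2_{loc}$ near this part of the boundary follows from interior elliptic regularity since the relevant boundary piece is in the interior of $\Omega$). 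Thus the system \eqref{eq:a1} holds, and Theorem~\ref{thm:1} yields
\begin{equation}\label{eq:contra1}
\lambda u(\mathbf x_c)-f_1(\mathbf x_c,u(\mathbf x_c))=0.
\end{equation}

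Finally I would invoke Assumption~A to get the contradiction. Since $\psi\in\mathscr{B}$, the first alternative in \eqref{eq:a3} reads $\lambda u(\mathbf x_c)-f(\mathbf x,u(\mathbf x_c))\neq 0$ at precisely such a corner point $\mathbf x_c\in\partial D$ with $D\cap B_h(\mathbf x_c)=\mathcal C_h$; here the relevant $(D;f)$ is $(D_1;f_1)$, and \eqref{eq:contra1} contradicts this. If instead the corner $\mathbf x_c$ comes from $D_2$ not meeting $\overline{D_1}$, the same argument with the roles of $(u,f_1,D_1)$ and $(v,f_2,D_2)$ interchanged applies. One should also treat the case where the corner of $D_1\Delta D_2$ at $\mathbf x_c$ is a corner of $D_1$ that happens to lie on $\overline{D_2}$ as well; but on $\partial\mathbf G$ such a configuration would force $\mathbf x_c$ to be an interior point of $\overline{D_1\cup D_2}$ (so not on $\partial\mathbf G$), or the two corners coincide, which is excluded by a standard argument showing that if $D_1$ and $D_2$ have the same corner with the same opening at a boundary point of $\mathbf G$ then locally $D_1=D_2$ there and $\mathbf x_c$ is not a corner of $D_1\Delta D_2$. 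The main obstacle is this careful geometric bookkeeping: one must ensure that at a corner of $D_1\Delta D_2$ lying on $\partial\mathbf G$, one genuinely obtains a one-sided corner configuration $\mathcal C_h\subset D_j$ with $B_h(\mathbf x_c)$ disjoint from the other inclusion — or reduce to it — so that the hypotheses of Theorem~\ref{thm:1} are met; the analytic core (Theorem~\ref{thm:1}, whose proof uses the CGO estimates of Lemma~\ref{lem:cgoest}) is already in hand.
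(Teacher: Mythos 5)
Your proposal is correct and follows essentially the same route as the paper's proof: contradiction, unique continuation from the matched Cauchy data to obtain $u_1=u_2$ in $\mathbf G$, reduction to the local transmission system \eqref{eq:a1} on a one-sided corner $\mathcal C_h$ contained in one inclusion and disjoint from the other, and then Theorem \ref{thm:1} combined with Assumption A to reach the contradiction. Your additional geometric bookkeeping at the end is more careful than the paper's ``without loss of generality,'' but the substance is identical.
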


\begin{proof}
By contradiction and also noting that $\Delta$ is invariant under rigid motion,  without loss of generality, we assume that there exists a corner $\mathcal C_h$ defined \eqref{eq:C_h def} satisfying $ D_2\cap B_h(\mathbf 0)=\mathcal C_h \Subset \Omega \backslash \overline{D_1 }$, where $\mathbf 0 \in \partial D_2$. Let $u_j$ be the wave field to the scattering problem \eqref{eq:helm1} associated with $D_j$, $j=1,2$.  By virtue of \eqref{eq:a4}, using the fact that $u_j$ is real analytic in $\Omega \backslash \overline {( D_1\cup D_2)}$, from unique continuation principle,  it yields that  
	\begin{equation}\label{eq:th23}
\begin{cases}
\Delta u_2+f_2(\mathbf x, u_2)=0 &\quad \mbox{in}\ \ \mathcal{C}_h,\\
\Delta u_1+\lambda u_1=0 & \quad\mbox{in}\ \ \mathcal{C}_h,\\
u_2=u_1,\quad \partial_\nu u_2=\partial_\nu u_1 & \quad\mbox{on}\ \ \partial\mathcal{C}_h\backslash \partial B_h. 
\end{cases}
\end{equation}
Since $f_2\in \mathscr{A}$, according to Theorem \ref{thm:1}, it arrives that
$
 \lambda u_2(\mathbf 0)-f_2(\mathbf 0, u(\mathbf 0))=0,
$
which contradicts to \eqref{eq:a3}.

The proof is complete. 
\end{proof}

\subsection{Global unique identifiability results}

If we impose certain a-prior knowledge on the inclusion, we can establish the global uniqueness in determining the shape of the inclusion by a single measurement in the following two theorems by utilizing Theorem \ref{thm:2} and contradiction  arguments. 

\begin{theorem}\label{thm:3}
Let $(D; f)\in\mathscr{A}$, where $D$ is a convex polygon in $\mathbb{R}^2$ or a convex polyhedron in $\mathbb R^3$. Then $D$ is uniquely determined by a single boundary measurement $\Lambda_{D, f}(\psi)$ with a fixed $\psi \in \mathscr{B}$. 

\end{theorem}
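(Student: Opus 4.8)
The plan is to argue by contradiction using the local non-existence of corners established in Theorem~\ref{thm:2}. Suppose two admissible pairs $(D_1;f_1)$ and $(D_2;f_2)$ with $D_1, D_2$ convex polygons (or polyhedra) produce the same boundary data $\Lambda_{D_1,f_1}(\psi)=\Lambda_{D_2,f_2}(\psi)$ for a fixed $\psi\in\mathscr{B}$, yet $D_1\neq D_2$. Let $\mathbf{G}$ be the unbounded-side connected component of $\Omega\backslash\overline{D_1\cup D_2}$ whose boundary meets $\partial\Omega$. By Theorem~\ref{thm:2}, $\partial\mathbf{G}$ contains no corner coming from $D_1\Delta D_2$. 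The strategy is then purely geometric: show that if $D_1\neq D_2$ are both convex polygons, then $\partial\mathbf{G}$ must contain a corner of $D_1$ or $D_2$ that lies on the boundary of $D_1\Delta D_2$, giving the contradiction.

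The key geometric steps I would carry out are as follows. First, since $D_1\neq D_2$ and both are convex, the symmetric difference $D_1\Delta D_2$ is nonempty; one of the two, say $D_2\not\subset D_1$, so $D_2\backslash\overline{D_1}\neq\emptyset$. Consider a point $\mathbf{x}^*$ of $\partial D_2$ that is furthest from $D_1$ in the appropriate sense — more precisely, pick a point of $D_2\backslash\overline{D_1}$ realizing the maximal distance to $\overline{D_1}$ (or, cleanly, a vertex of the convex polygon $D_2$ lying outside $\overline{D_1}$). By convexity of $D_2$, each vertex is a genuine corner point of $D_2$. One then checks that such an extreme vertex $\mathbf{x}^*\in\partial D_2$ lies on $\partial\mathbf{G}$: a small ball $B_h(\mathbf{x}^*)$ meets $D_2$ in a sectorial/polyhedral corner $\mathcal{C}_h$, and by choosing $\mathbf{x}^*$ extremal we can ensure $B_h(\mathbf{x}^*)\cap\overline{D_1}=\emptyset$ for $h$ small, so that $\mathcal{C}_h\subset\Omega\backslash\overline{D_1}$ and the complementary sector $B_h(\mathbf{x}^*)\backslash\overline{D_2}$ lies in $\mathbf{G}$ (using that $\Omega\backslash\overline{D_1\cup D_2}$ is connected near the boundary and that the relevant region connects to $\partial\Omega$). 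Thus $\mathcal{C}_h$ is a corner of $D_1\Delta D_2$ sitting on $\partial\mathbf{G}$, contradicting Theorem~\ref{thm:2}. Finally one invokes Assumption~A ($\psi\in\mathscr{B}$) exactly as needed in Theorem~\ref{thm:2}, noting that the apex $\mathbf{x}^* = \mathbf{x}_c$ satisfies $D_2\cap B_h(\mathbf{x}_c)=\mathcal{C}_h$.

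The main obstacle, and the point requiring genuine care rather than routine verification, is the \emph{topological} claim that the extreme vertex $\mathbf{x}^*$ actually lies on $\partial\mathbf{G}$ — i.e. that the sector $B_h(\mathbf{x}^*)\backslash\overline{D_2}$ connects to the component of $\Omega\backslash\overline{D_1\cup D_2}$ touching $\partial\Omega$, and is not sealed off inside some bounded cavity created by $D_1$. For convex $D_1, D_2$ this can be handled: by choosing $\mathbf{x}^*$ to be the vertex of $D_2$ maximizing the linear functional $\mathbf{x}\mapsto \mathbf{n}\cdot\mathbf{x}$ for a direction $\mathbf{n}$ in which $D_2$ protrudes beyond $D_1$, one gets a supporting hyperplane argument showing the exterior side of that vertex is "globally outermost," hence reachable from $\partial\Omega$ through $\Omega\backslash\overline{D_1\cup D_2}$ (recall $\Omega\backslash\overline{D_j}$ is connected by hypothesis). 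If $D_1\subset D_2$ or vice versa the argument degenerates slightly but is easier: any vertex of the larger polygon not shared by the smaller one works directly. I would also remark that the same proof applies verbatim when $D$ is known a priori to be of corona shape, since a corona boundary likewise contains corner/conic points that must survive on $\partial\mathbf{G}$, and I would cross-reference Theorem~\ref{thm:1} and Lemma~\ref{lem:cgoest} for the analytic input already consumed inside Theorem~\ref{thm:2}.
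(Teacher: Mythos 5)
Your proposal is correct and follows essentially the same route as the paper: the paper proves Theorem~\ref{thm:3} (and Theorem~\ref{thm:finiteconic}) by contradiction, using convexity to exhibit a corner $\mathcal{C}_h\subset D_2\setminus\overline{D_1}$ on $\partial\mathbf{G}$ and then invoking Theorem~\ref{thm:2}. In fact the paper leaves the geometric/topological step (that an extremal vertex of $D_2\setminus\overline{D_1}$ is accessible from $\partial\Omega$) implicit, whereas your supporting-hyperplane argument supplies exactly the detail needed.
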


\begin{figure}
\label{fig:3 cor}
	\centering
	\begin{minipage}{0.49\linewidth}
		\centering
		\includegraphics[width=0.5\linewidth]{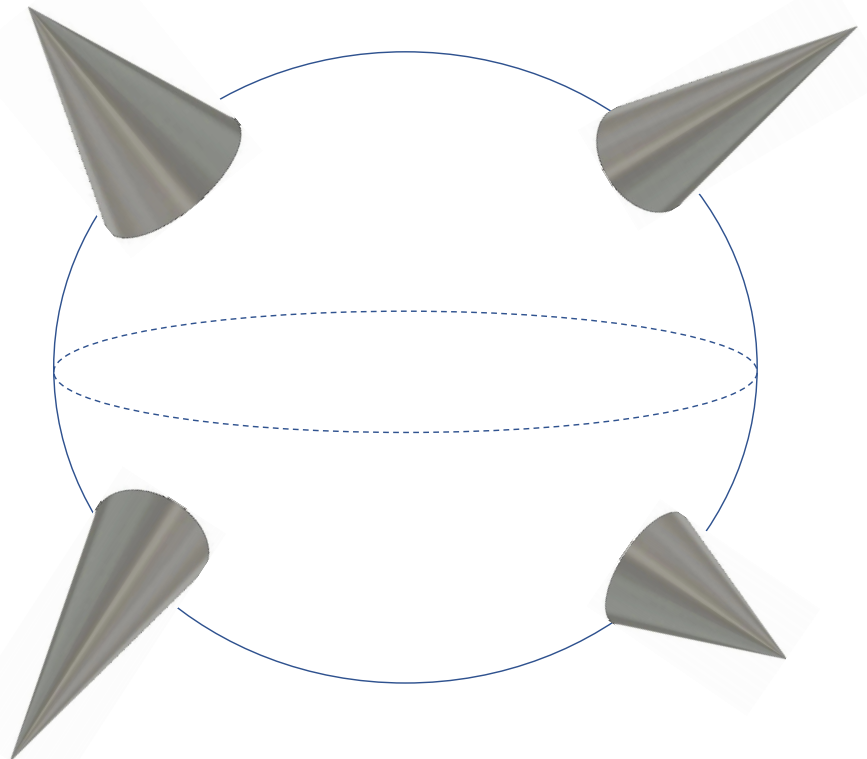}
	\end{minipage}
	\begin{minipage}{0.49\linewidth}
		\centering
		\includegraphics[width=0.5\linewidth]{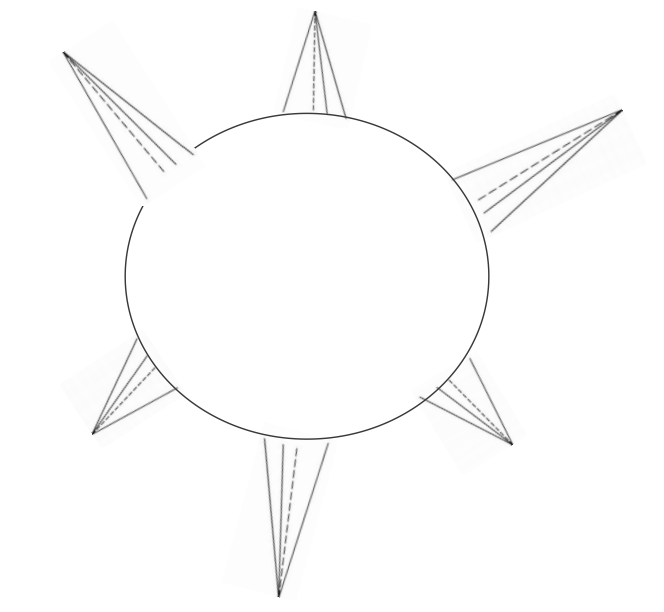}
	\end{minipage}
	
	\begin{minipage}{0.49\linewidth}
		\centering
		\includegraphics[width=0.5\linewidth]{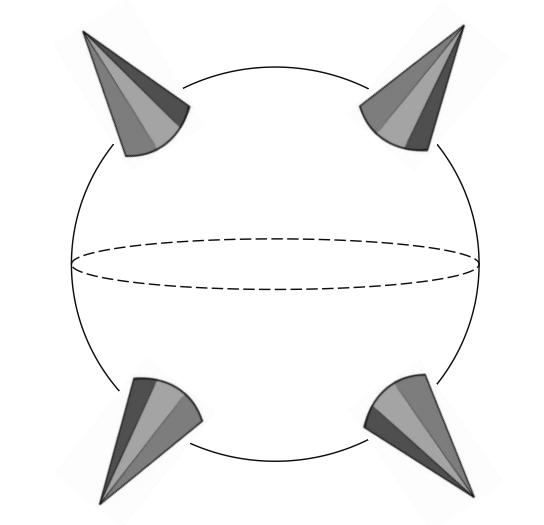}
	\end{minipage}
	\begin{minipage}{0.49\linewidth}
		\centering
		\includegraphics[width=0.5\linewidth]{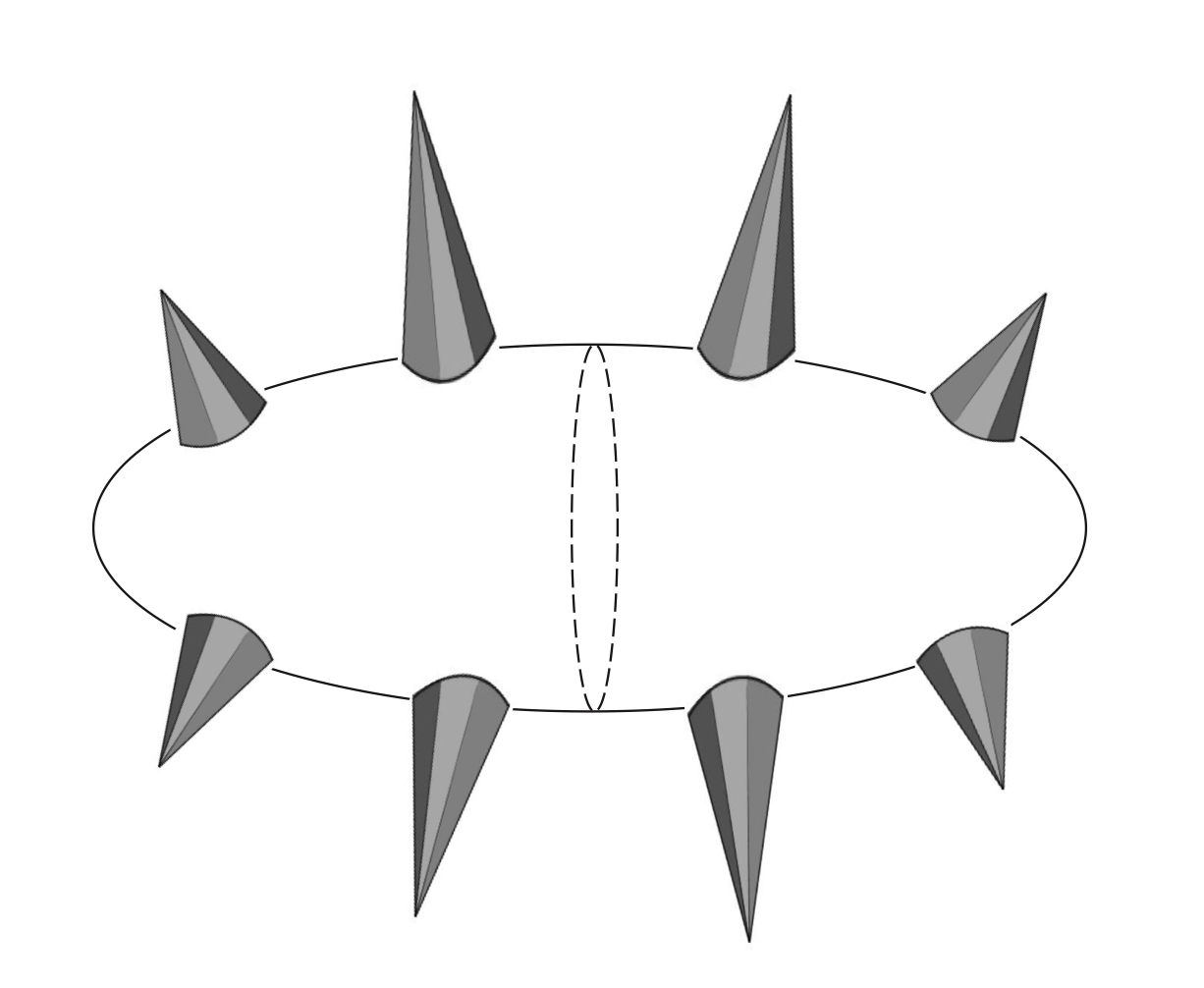}
	\end{minipage}
	\caption{Schematic illustration of corona-shape scatterers. }
\end{figure}

In the following we introduce an admissible class $\mathcal T$ of corona shapes, which shall be used in Theorem \ref{thm:finiteconic}. The schematic illustration of corona-shape  scatterers is displayed in Figure 3. 

\begin{defn}\label{def:cone3}
	Let $\widetilde D$ be a convex bounded Lipschitz domain with a connected complement $\mathbb R^3\setminus\overline{D}$. If there exsit finitely many strictly convex conic cones $\mathcal S_{\mathbf x_j,\theta_j}(j=1,2,\dots,\ell,\ell\in \mathbb N)$ defined in \eqref{eq:cone1} such that
	\begin{itemize}
		\item [(a)]
		the apex $\mathbf x_j\in \mathbb R^3\setminus \overline{\widetilde D}$ and  let $\mathcal S_{\mathbf x_j,\theta_j}^{\ast}=\mathcal S_{\mathbf x_j,\theta_j}\setminus \overline{ \widetilde D} $ respectively, where the apex $\mathbf x_j$ belongs to the strictly convex bounded conic corner of $\mathcal S_{\mathbf x_j,\theta_j}^{\ast}$;
		\item [(b)]
		$\partial \overline{\mathcal S_{\mathbf x_j,\theta_j}^{\ast}}\setminus \partial \overline{\mathcal S_{\mathbf x_j,\theta_j}}\subset \partial \overline {\widetilde D}$ and $\cap _{j=1}^\ell\partial \overline{\mathcal S_{\mathbf x_j,\theta_j}^{\ast}}\setminus \partial \overline{\mathcal S_{\mathbf x_j,\theta_j}}=\emptyset$;
	\end{itemize}
	then $ D :=\cup_{j=1}^\ell\mathcal S_{\mathbf x_j,\theta_j} \cup \widetilde D $ is said to belong to a class $\mathcal T$ of corona shape. 	
\end{defn}

\begin{theorem}\label{thm:finiteconic}
	Suppose that $  D_{m},m=1,2$  belong to the admissible class $\mathcal T$ of corona shape, where
	\begin{equation}\label{eq:Dm}
		D_m=\cup_{j^{(m)}=1}^{\ell^{(m)}}\mathcal C_{\mathbf x_{j^{(m)}},\theta_{j^{(m)}}}\cup 	\widetilde {D_m},\quad m=1,2.
	\end{equation}
	Consider the  scattering problem \eqref{eq:helm1} associated with   $( D _{m},f_m) \in \mathscr{A} ,m=1,2$. If the following conditions:
	\begin{subequations}
		\begin{align}
		\Lambda_{ D_1, f_1}(\psi)&=\Lambda_{ D_2, f_2}(\psi)\quad\mbox{for a fixed $\psi\in \mathscr{B}$}\label{eq:thm25} \\
				\widetilde { D_1}&=	\widetilde {D_2}, \label{eq:ass 56a} \\
			 \theta_{i^{(1)}}&=\theta_{j^{(2)}} \   \mbox{for} \  i^{(1)} \in \{1,\ldots,\ell^{(1)}\}\  \mbox{and}  \  j^{(2)}\in \{1,\ldots,\ell^{(2)}
		\}\ \mbox{when} \ \mathbf x_{i^{(1)}}=\mathbf x_{j^{(2)}},  \label{eq:ass 56b}
		\end{align}
	\end{subequations}
	then $\ell^{(1)}=\ell^{(2)},\ \mathbf x_{j^{(1)}}=\mathbf x_{j^{(2)}}$ and $\theta_{j^{(1)}}=\theta_{j^{(2)}}$, where $j^{(m)}=1,\dots \ell^{(m)}$, $m=1,2$. Namely, one has $ D_1=D _2$. 	
\end{theorem}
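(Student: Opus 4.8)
\textbf{Proof plan for Theorem~\ref{thm:finiteconic}.}
The strategy is a contradiction argument built on the local uniqueness result Theorem~\ref{thm:2}, exploiting the rigid corona structure encoded in Definition~\ref{def:cone3}. Suppose $D_1\neq D_2$. Since $\widetilde{D_1}=\widetilde{D_2}=:\widetilde D$ by \eqref{eq:ass 56a}, the discrepancy must come from the attached conic protrusions $\mathcal C_{\mathbf x_{j^{(m)}},\theta_{j^{(m)}}}$. First I would argue that the connected component $\mathbf G$ of $\Omega\setminus\overline{D_1\cup D_2}$ adjacent to $\partial\Omega$ reaches, on its boundary, the apex of at least one ``unmatched'' cone. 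Concretely, if $D_1\neq D_2$ then either there is an index $i^{(1)}$ whose apex $\mathbf x_{i^{(1)}}$ does not coincide with any $\mathbf x_{j^{(2)}}$ (or vice versa), or every apex is matched but for some matched pair $\mathbf x_{i^{(1)}}=\mathbf x_{j^{(2)}}$ the opening angles would have to differ — which is excluded by \eqref{eq:ass 56b} — so the only possibility is an unmatched apex. Near such an unmatched apex, say $\mathbf x_{i^{(1)}}\in\partial D_1$ with $\mathbf x_{i^{(1)}}\notin\overline{D_2}$, the set $D_1$ coincides locally with the convex conic corner $\mathcal S_{\mathbf x_{i^{(1)}},\theta_{i^{(1)}}}^h=\mathcal C_h$ for small $h$, by condition (a) of Definition~\ref{def:cone3}; and by condition~(b) (the exposed pieces of the cone boundaries are disjoint and lie on $\partial\widetilde D$), a sufficiently small ball $B_h(\mathbf x_{i^{(1)}})$ meets neither $\overline{\widetilde D}$ nor any other cone of $D_1$ other than $\mathcal S_{\mathbf x_{i^{(1)}},\theta_{i^{(1)}}}$, and meets $D_2$ not at all. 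Hence $B_h(\mathbf x_{i^{(1)}})\cap D_1=\mathcal C_h$ and $B_h(\mathbf x_{i^{(1)}})\cap D_2=\emptyset$, so this corner of $D_1$ lies on $\partial\mathbf G$.

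The second step is to transfer the Cauchy-data coincidence \eqref{eq:thm25} into the transmission system \eqref{eq:th23} on $\mathcal C_h$, exactly as in the proof of Theorem~\ref{thm:2}: since $u_1,u_2$ are real-analytic in $\Omega\setminus\overline{D_1\cup D_2}$ and agree on $\partial\Omega$ together with their normal derivatives, the unique continuation principle gives $u_1=u_2$ and $\partial_\nu u_1=\partial_\nu u_2$ throughout $\mathbf G$, in particular on $\partial\mathcal C_h\setminus\partial B_h(\mathbf x_{i^{(1)}})$. In $\mathcal C_h$ we have $\Delta u_1+f_1(\mathbf x,u_1)=0$ (as $\mathcal C_h\subset D_1$) and $\Delta u_2+\lambda u_2=0$ (as $\mathcal C_h\subset\Omega\setminus\overline{D_2}$). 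Applying Theorem~\ref{thm:1} with $u=u_1$, $v=u_2$ yields $\lambda u_1(\mathbf x_{i^{(1)}})-f_1(\mathbf x_{i^{(1)}},u_1(\mathbf x_{i^{(1)}}))=0$. But $\mathbf x_{i^{(1)}}\in\partial D_1$ is precisely a point $\mathbf x_c$ of the type appearing in Assumption~A for the admissible input $\psi\in\mathscr B$, so $\lambda u_1(\mathbf x_{i^{(1)}})-f_1(\mathbf x_{i^{(1)}},u_1(\mathbf x_{i^{(1)}}))\neq 0$ — a contradiction. (Symmetrically one handles the case of an unmatched apex of $D_2$.) Therefore $D_1=D_2$, which forces $\ell^{(1)}=\ell^{(2)}$ and a bijection of apices with $\mathbf x_{j^{(1)}}=\mathbf x_{j^{(2)}}$, and then $\theta_{j^{(1)}}=\theta_{j^{(2)}}$ since the cone is recovered from $D_m$ near each apex.

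The step I expect to be the main obstacle is the first one: the careful geometric bookkeeping showing that if $D_1\neq D_2$ then an \emph{exposed} apex exists — i.e. an apex $\mathbf x_{i^{(1)}}$ at which, for small $h$, the ball $B_h(\mathbf x_{i^{(1)}})$ sees $D_1$ as a single clean conic corner $\mathcal C_h$ and is disjoint from $D_2$. This requires using convexity of $\widetilde D$ and of each cone, together with the separation conditions in Definition~\ref{def:cone3}(a)--(b) (the apex lies outside $\overline{\widetilde D}$ and the exposed cone-boundary pieces are pairwise disjoint and contained in $\partial\widetilde D$), to rule out degenerate configurations where two protrusions overlap near a common apex or where an apex of $D_1$ is buried inside $D_2$; the hypotheses \eqref{eq:ass 56a}--\eqref{eq:ass 56b} are exactly what is needed to close off the remaining matched-but-different-angle case. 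Once the exposed-apex lemma is in place, the rest is a direct citation of Theorems~\ref{thm:1} and~\ref{thm:2} and the unique continuation argument already used above.
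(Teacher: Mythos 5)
Your proposal is correct and follows essentially the same route as the paper: a contradiction argument that uses \eqref{eq:ass 56a}--\eqref{eq:ass 56b} and the corona structure of Definition~\ref{def:cone3} to locate an exposed conic corner in $D_1\Delta D_2$ lying on $\partial\mathbf G$, and then invokes Theorem~\ref{thm:2} (equivalently, Theorem~\ref{thm:1} plus Assumption~A via $\psi\in\mathscr B$) to contradict \eqref{eq:a3}. The paper's own proof is terser — it simply asserts the existence of such a corner ``without loss of generality'' and cites Theorem~\ref{thm:2} — whereas you spell out the geometric bookkeeping and the unique-continuation step; the substance is the same.
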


\begin{proof}
	We prove this theorem by contradiction. Suppose that $ D_1 \neq D _2$, due to \eqref{eq:ass 56a}  and  \eqref{eq:ass 56b}, without loss of generality  one concludes that there exists a conic corner $\mathcal S^h_{\mathbf x_c,\theta_c } \subset  D_2 \backslash \overline{ D_1 }$. Under \eqref{eq:thm25}, by virtue of Theorem \ref{thm:2}, we get the contradiction. 
\end{proof}

\begin{remark}\label{rem:26}
In Theorems \ref{thm:3} and \ref{thm:finiteconic},  a single boundary measurement $\Lambda_{ D, f}(\psi)$ can uniquely determine the inclusion $D$ under certain a-prior knowledge on $D$, where $\psi\in \mathscr{B}$. Namely, if $\psi\in \mathscr{B}$, the the admissible condition $\lambda u(\mathbf x_c)-f(\mathbf x, u(\mathbf x_c))\neq 0$ is fulfilled, where $\mathbf x_c$ is an apex of $D$ and $u$ is the  solution to \eqref{eq:helm1} associated  with $\psi$. The aforementioned admissible condition covers the corresponding admissible assumption for previous uniquely shape  determination of a  convex polygonal or polyhedral or corona-shape acoustic medium scatter $D$ by a single far-field measurement in inverse acoustic scattering problems (cf.\cite[Theorem 4.1]{DCL}) and \cite[Theorems 5.2, 5.3  and Corollary 5.5]{DFL}, where the medium parameter $f(\mathbf x, u)$ characterizing $D$ is linear with respect to the total wave field $u$, namely $f(\mathbf x, u)=q u$ with $q\in L^\infty(D )$. Indeed, the admissible assumption in  \cite{DCL,DFL} is $(q(\mathbf x_c)-\lambda)u(\mathbf x_c) \neq 0$, where $q$ is H\"older continuous near the neighborhood of $\mathbf x_c$. On the other hand, the nonlinearities can leverage certain technical restrictions in the linear counterpart and can help identify the anomalies. For example, when $f(\mathbf x, u)=\lambda u+q(\mathbf x)u^2$, where $f$ has the same linear term as the background medium configuration,  the admissible condition \eqref{eq:a3} turns out to be $q(\mathbf x_c) u^2(\mathbf x_c)\neq 0$. Therefore, for this specific form of  $f(\mathbf x, u)$ characterizing  the anomalous inclusion $D$, although the linear term in $f(\mathbf x, u)$  cannot contribute to the shape determination of $D$,  the nonlinear term  in $f(\mathbf x, u)$ can help one to identity $D$ by a single boundary measurement $\Lambda_{ D, f}(\psi)$ under the admissible condition $q(\mathbf x_c) u^2(\mathbf x_c)\neq 0$, where $D$  is a convex polygon or polyhedron or corona-shape inclusion with certain a-prior knowledge described in Theorem \ref{thm:finiteconic}. 

\end{remark}

\section{Determining both supports and contents of anomalous inclusions}

In this section, we consider the inverse boundary problem \eqref{eq:ip2} in determining both the support and its physical content of an anomalous inclusion by a minimal number of boundary measurements. Throughout the present section, we consider $a(\mathbf x, u)$ in \eqref{eq:helm1} of the following form:
\begin{equation}\label{eq:form1}
a(\mathbf x, u)=\bigg(\sum_{j=1}^N \lambda_j u^j-\lambda u\bigg)\chi_D+\lambda u\chi_\Omega,\quad \mathbf x\in \Omega,
\end{equation} 
where $\lambda_j\in\mathbb{C}$. That is, the inhomogeneity inside $D$ is given by
\begin{equation}\label{eq:form2}
f(\mathbf x, u)=\sum_{j=1}^N \lambda_j u^j,\quad \lambda_j\in\mathbb{C}. 
\end{equation}
Next, we shall show that an anomalous inclusion of the form $(D; f)$ with $D$ being a convex polygon/polyhedron or an admissible corona shape and $f$ of the form \eqref{eq:form2} can be uniquely determined uniquely determined by $N$ properly chosen boundary measurements. To that end, we introduce the following admissibility condition on the boundary inputs. 

\medskip

 \noindent{\bf Assumption B.}~Let $(D; f)$ be described above. We say that $\psi_j\in H^{1/2}(\partial\Omega)$, $j=1,2,\ldots, N$, are admissible and write $\psi_j\in \mathscr{H}$ if the corresponding solutions to \eqref{eq:helm1}, written as $u_{\psi_j}$ in what follows, fulfil the following condition:
\begin{equation}\label{eq:aa3}
\lambda u_{\psi_j}(\mathbf x_c)-f(\mathbf x_c, u_{\psi_j}(\mathbf x_c))\neq 0,\ \ 1\leq j\leq N; \qquad \prod_{1\leq i\leq j\leq N} \left(u_{\psi_j} (\mathbf{x}_c)-u_{\psi_i}(\mathbf{x}_c)\right)\neq 0,
\end{equation}
where $\mathbf x_c \in \partial D$ satisfies  $D\cap B_h(\mathbf x_c)=\mathcal C_h$ defined in \eqref{eq:C_h def} for a sufficient small $h\in \mathbb R_+$. 

\medskip

Similar to Assumption A, we shall show in Section~\ref{sect:5} that Assumption B can hold in a certain generic scenario of practical interest.

 \begin{theorem}\label{thm:simul1}
Let $(D; f)\in\mathscr{A}$,  where $D$ is a convex polygon in $\mathbb{R}^2$ or a convex polyhedron in $\mathbb R^3$. Assume that $f$ is of the form \eqref{eq:form2}. Then both $D$ and $f$ are uniquely determined by $N$ boundary measurements $\Lambda_{D, f}(\psi_j)$ with $\psi_j \in \mathscr{H}$, $j=1,2,\ldots, N$. 

Assume that $D_m$, $m=1,2$, are two  an admissible corona shape as described in Definition~\ref{def:cone3}, where $D_m$ is defined  by \eqref{eq:Dm}. Suppose that
\begin{equation}
	f_m(\mathbf x, u)=\sum_{j=1}^{N_m} \lambda_{j,(m)} u^j,\quad \lambda_{j,(m)}\in\mathbb{C},\quad N_m\in\mathbb N. 
\end{equation}
If the assumption \eqref{eq:ass 56a}, \eqref{eq:ass 56b} and
\begin{equation}
	\Lambda_{ D_1, f_1}(\psi_j)=\Lambda_{ D_2, f_2}(\psi_j)\quad\mbox{for a fixed $\psi\in \mathscr{H}$},\quad j=1,\ldots,\max\{N_1,N_2\}. 
\end{equation}
are fulfilled, then $D_1=D_2$, $N:=N_1=N_2$ and $\lambda_{j,(1)}=\lambda_{j,(2)}  $, $j=1,\ldots, N$. 
\end{theorem}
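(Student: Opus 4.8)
The plan is to prove the two assertions of the theorem one after the other, each by combining the shape-determination machinery of Section 2 with an algebraic interpolation argument for recovering the coefficients $\lambda_j$. For the first assertion (convex polygon/polyhedron $D$), I would begin by noting that the shape $D$ alone is already determined by a single measurement: since $\psi_1\in\mathscr H$ satisfies in particular $\lambda u_{\psi_1}(\mathbf x_c)-f(\mathbf x_c,u_{\psi_1}(\mathbf x_c))\neq 0$ at every candidate corner $\mathbf x_c$, the admissibility class $\mathscr B$ of Theorem~\ref{thm:3} is met, so Theorem~\ref{thm:3} yields $D_1=D_2=:D$ whenever two admissible pairs $(D_m;f_m)$ with $f_m$ of the form \eqref{eq:form2} produce the same $\Lambda_{D_m,f_m}(\psi_1)$. (Here I use that $D_1\Delta D_2$ having no corner on $\partial\mathbf G$, together with convexity of both polygons/polyhedra, forces $D_1=D_2$, exactly as in the proof of Theorem~\ref{thm:3}.)

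With $D$ now fixed, I would recover the $N$ coefficients from the $N$ measurements. Fix a corner $\mathbf x_c$ of $D$ and set $t_j:=u_{\psi_j}(\mathbf x_c)$, $j=1,\dots,N$; by the second condition in \eqref{eq:aa3} the nodes $t_1,\dots,t_N$ are pairwise distinct. For each $j$, the pair $(u_{\psi_j}, v_{\psi_j})$ — where $v_{\psi_j}$ solves $\Delta v+\lambda v=0$ in the exterior connected component and agrees with $u_{\psi_j}$ in a neighbourhood of $\mathbf x_c$ by unique continuation, as in the proof of Theorem~\ref{thm:2} — satisfies the hypotheses of Theorem~\ref{thm:1} on a corner $\mathcal C_h$ at $\mathbf x_c$. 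Theorem~\ref{thm:1} then gives
\begin{equation}\label{eq:interp-eqn}
\lambda\, t_j = f(\mathbf x_c, t_j)=\sum_{i=1}^N \lambda_i\, t_j^{\,i},\qquad j=1,\dots,N.
\end{equation}
Now suppose $(D;f^{(1)})$ and $(D;f^{(2)})$ with $f^{(m)}(\mathbf x,u)=\sum_{i=1}^{N}\lambda_i^{(m)}u^i$ produce the same Cauchy data for all $\psi_j$. Writing $p(t):=\sum_{i=1}^N(\lambda_i^{(1)}-\lambda_i^{(2)})t^i$, relation \eqref{eq:interp-eqn} applied to each $f^{(m)}$ shows $p(t_j)=0$ for $j=1,\dots,N$; since $p$ has no constant term it also vanishes at $t=0$, so $p$ is a polynomial of degree $\le N$ with $N+1$ distinct roots $0,t_1,\dots,t_N$, hence $p\equiv 0$ and $\lambda_i^{(1)}=\lambda_i^{(2)}$ for all $i$. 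This proves the first assertion. (If $N_1\neq N_2$ a priori, the same Vandermonde argument with $\max\{N_1,N_2\}$ distinct nonzero nodes plus the node $0$ forces the higher-order coefficients to vanish, giving $N_1=N_2$.)

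For the second assertion (corona-shape $D_m$), the argument is structurally identical but invokes Theorem~\ref{thm:finiteconic} in place of Theorem~\ref{thm:3} to get $D_1=D_2$: condition \eqref{eq:aa3} with $j=1$ places $\psi_1$ in $\mathscr B$, so under \eqref{eq:ass 56a}, \eqref{eq:ass 56b} and the coincidence of Cauchy data, Theorem~\ref{thm:finiteconic} yields that the two corona decompositions agree, i.e. $D_1=D_2$. One then picks a corner $\mathbf x_c$ of this common domain — every corona shape in class $\mathcal T$ has at least one conic corner at some apex $\mathbf x_{j^{(m)}}$ — and repeats the interpolation argument above with $\max\{N_1,N_2\}$ nodes to conclude $N_1=N_2=:N$ and $\lambda_{j,(1)}=\lambda_{j,(2)}$ for $j=1,\dots,N$. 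The main obstacle, and the only place real care is needed, is ensuring that Theorem~\ref{thm:1} is legitimately applicable at the chosen corner for every $\psi_j$ simultaneously: one must verify that the solutions $u_{\psi_j}$ lie in $H^2_{loc}$ near $\mathbf x_c$ (from interior elliptic regularity and the admissibility of $f$), that unique continuation from $\mathbf G$ propagates the equality $u_{\psi_j}^{(1)}=u_{\psi_j}^{(2)}$ and their normal derivatives up to $\partial\mathcal C_h\setminus\partial B_h$, and that the nonlinearity $\sum_i\lambda_i u^i$ indeed lies in the Hölder class $\mathscr A$ required by Theorem~\ref{thm:1} — the latter being immediate since polynomials in $u$ are smooth and $u\in C^\beta$ locally. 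Everything else is the elementary polynomial-interpolation fact that a nonzero polynomial of degree $\le N$ cannot vanish at $N+1$ distinct points.
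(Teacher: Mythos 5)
Your overall architecture---shape first via the single-measurement results of Section~2, then coefficients via polynomial interpolation at a common corner using the pairwise-distinct values $u_{\psi_j}(\mathbf x_c)$---matches the paper's proof, and your Vandermonde step at the end is fine. The genuine gap is your displayed identity $\lambda t_j=f(\mathbf x_c,t_j)$: it is both underivable and false. It is precisely the negation of the first admissibility condition in \eqref{eq:aa3}, so if it held, the hypothesis $\psi_j\in\mathscr H$ would be vacuous and the theorem empty. The derivation fails because, once $D_1=D_2=:D$ is established, the corner $\mathcal C_h$ lies inside \emph{both} inclusions; the function $v_{\psi_j}$ you invoke solves $\Delta v+\lambda v=0$ only in the exterior component $\Omega\setminus\overline D$, and there is no reason it extends across $\partial D$ as a solution of the linear equation \emph{inside} $\mathcal C_h$ with Cauchy data matching $u_{\psi_j}$ on $\partial\mathcal C_h\setminus\partial B_h$. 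Indeed, the content of Theorem~\ref{thm:1} is exactly that the existence of such a linear extension forces $\lambda u(\mathbf x_0)-f(\mathbf x_0,u(\mathbf x_0))=0$, which Assumption~B rules out; so Theorem~\ref{thm:1} is not applicable at this stage of the argument, and your confusion comes from carrying over the geometry of the shape-determination step (where one domain's corner protrudes into the other's exterior) to the coefficient step (where the corner is common to both domains).

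The correct tool is the two-nonlinearity corner lemma, Lemma~\ref{lem:aux1}. Applying it to the pair $\bigl(u^{(1)}_{\psi_i},u^{(2)}_{\psi_i}\bigr)$, which satisfy $\Delta u+f_m(\mathbf x,u)=0$ in $\mathcal C_h$ for $m=1,2$ respectively and share Cauchy data on $\partial\mathcal C_h\setminus\partial B_h$ by unique continuation from the common exterior, yields directly
$f_1\bigl(\mathbf x_0,u^{(1)}_{\psi_i}(\mathbf x_0)\bigr)-f_2\bigl(\mathbf x_0,u^{(2)}_{\psi_i}(\mathbf x_0)\bigr)=0$,
hence $p(t_i)=0$ with $t_i:=u_{\psi_i}(\mathbf x_0)$, without ever asserting that either side equals $\lambda t_i$. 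From there your interpolation argument goes through verbatim (including the needed observation that $t_i\neq0$, which does follow from the first condition in \eqref{eq:aa3} since $f(\mathbf x_c,0)=0$), and the same repair applies word for word to the corona-shape case.
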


In order to prove Theorem~\ref{thm:simul1}, we first derive an auxiliary lemma. 

\begin{lemma}\label{lem:aux1}
Let $f_m\in\mathscr{A}$, $m=1,2$, and $\mathcal{C}_h$ be a corner. Consider the following system of differential equations for $u\in H_{loc}^2(\mathcal{C}_h)$ and $v\in H_{loc}^2(\mathcal{C}_h)$:
\begin{equation}\label{eq:lem32 eq}
\begin{cases}
\Delta u+f_1(\mathbf x, u)=0 &\quad \mbox{in}\ \ \mathcal{C}_h,\\
\Delta v+f_2(\mathbf x, v)=0 & \quad\mbox{in}\ \ \mathcal{C}_h,\\
u=v,\quad \partial_\nu u=\partial_\nu v & \quad\mbox{on}\ \ \partial\mathcal{C}_h\backslash \partial B_h ,
\end{cases}
\end{equation}
where $\nu$ is the exterior unit normal vector to $\partial D$. Then one has
\begin{equation}\label{eq:lem 32 a2}
 f_1(\mathbf{x}_0, u(\mathbf{x}_0))-f_2(\mathbf{x}_0, v(\mathbf x_0))=0,
\end{equation}
where $\mathbf x_0$ is the apex of $\mathcal{C}_h$.
\end{lemma}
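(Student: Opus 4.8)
The plan is to mimic the proof of Theorem~\ref{thm:1}, replacing the linear equation $\Delta v+\lambda v=0$ by the second semilinear equation $\Delta v+f_2(\mathbf x,v)=0$, and tracking the single harmonic test function $u_0$ from Lemma~\ref{lem:cgoest}. As before, by the translation invariance of $\Delta$ we may assume the apex $\mathbf x_0$ of $\mathcal C_h$ is the origin. First I would pick $\mathbf d,\mathbf d^\perp\in\mathbb S^{n-1}$ and $\zeta>0$ as in \eqref{eq:d cond}, set $u_0(\mathbf x)=e^{\tau(\mathbf d+\mathrm i\mathbf d^\perp)\cdot\mathbf x}$ so that $\Delta u_0=0$ in $\mathbb R^n$, and apply Green's formula on $\mathcal C_h$ to the pair $(u-v,u_0)$. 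Using the two PDEs in \eqref{eq:lem32 eq} and the Cauchy data matching $u=v$, $\partial_\nu u=\partial_\nu v$ on $\partial\mathcal C_h\setminus\partial B_h$, one obtains the integral identity
\begin{equation}\notag
\int_{\mathcal C_h}\bigl(f_2(\mathbf x,v)-f_1(\mathbf x,u)\bigr)u_0\,\mathrm d\mathbf x=\int_{\partial\mathcal C_h\cap\partial B_h(\mathbf 0)}u_0\,\partial_\nu(u-v)-(u-v)\,\partial_\nu u_0\,\mathrm d\sigma.
\end{equation}

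Next I would expand the left-hand integrand about the apex. Since $u,v\in H^2(\mathcal C_h)$, Sobolev embedding gives $u,v\in C^\beta(\mathcal C_h)$ with $\beta\in(0,1]$ for $n=2$ and $\beta\in(0,1/2]$ for $n=3$; combined with the $C^\gamma$-continuity of $f_1(\mathbf x,\cdot)$ and $f_2(\mathbf x,\cdot)$ from the admissibility class $\mathscr A$, we get
\begin{equation}\notag
f_2(\mathbf x,v)-f_1(\mathbf x,u)=f_2(\mathbf 0,v(\mathbf 0))-f_1(\mathbf 0,u(\mathbf 0))+\delta(\mathbf x),\qquad |\delta(\mathbf x)|\lesssim|\mathbf x|^{\alpha},\ \ \alpha=\min\{\beta,\gamma\}\in(0,1).
\end{equation}
Substituting this expansion, the main term factors out as $\bigl(f_2(\mathbf 0,v(\mathbf 0))-f_1(\mathbf 0,u(\mathbf 0))\bigr)\int_{\mathcal C_h}u_0\,\mathrm d\mathbf x$. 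Then I would estimate each remaining piece exactly as in the proof of Theorem~\ref{thm:1}: the $\delta$-term is controlled by \eqref{eq:conicest2} against $\int_{\mathcal C_h}|\mathbf x|^\alpha|u_0|\,\mathrm d\mathbf x\lesssim\tau^{-(\alpha+n)}+\tau^{-1}e^{-\frac12\zeta h\tau}$, and the two boundary terms on $\partial\mathcal C_h\cap\partial B_h(\mathbf 0)$ are bounded, via the trace theorem together with $u-v\in H^1(\mathcal C_h)$ and the estimates \eqref{eq:est 29}, \eqref{eq:est 210}, by $(2\tau^2+1)^{1/2}e^{-\zeta h\tau}$ and $\sqrt2\,\tau e^{-\zeta h\tau}$ respectively. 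Invoking the lower bound \eqref{eq:conicest1}, $\bigl|\int_{\mathcal C_h}u_0\,\mathrm d\mathbf x\bigr|\geq C_{\mathcal C_h}\tau^{-n}+\mathcal O(\tau^{-1}e^{-\frac12\zeta h\tau})$, we arrive at
\begin{equation}\notag
\Bigl(C_{\mathcal C_h}\tau^{-n}+\mathcal O\bigl(\tau^{-1}e^{-\frac12\zeta h\tau}\bigr)\Bigr)\bigl|f_1(\mathbf 0,u(\mathbf 0))-f_2(\mathbf 0,v(\mathbf 0))\bigr|\lesssim\tau^{-(\alpha+n)}+(1+\tau)e^{-\zeta h\tau}+\tau^{-1}e^{-\frac12\zeta h\tau}.
\end{equation}
Multiplying by $\tau^{n}$ and letting $\tau\to\infty$ forces $f_1(\mathbf x_0,u(\mathbf x_0))-f_2(\mathbf x_0,v(\mathbf x_0))=0$, which is \eqref{eq:lem 32 a2}.

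The argument is essentially a routine adaptation of Theorem~\ref{thm:1}, so there is no serious obstacle; the only point requiring a little care is that now \emph{both} solutions $u$ and $v$ carry only Hölder (rather than smooth) regularity at the apex and both nonlinear terms $f_1,f_2$ contribute remainders, so one must check that the worst remainder exponent $\alpha=\min\{\beta,\gamma\}$ is still strictly positive and still beats the leading $\tau^{-n}$ after multiplication by $\tau^{n}$ — which it does since $\alpha>0$. Everything else (the Green identity, the interior $H^2$ regularity of $u,v$ used to justify the traces, and the decay estimates for $u_0$) is already in place from the admissibility of $\mathscr A$ and from Lemma~\ref{lem:cgoest}.
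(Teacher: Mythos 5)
Your proposal is correct and follows essentially the same route as the paper: the paper's proof of this lemma is precisely a sketch that sets $\lambda=0$ in the test function $u_0$, applies Green's formula to get $\int_{\mathcal C_h}(f_1(\mathbf x,u)-f_2(\mathbf x,v))u_0\,\mathrm d\mathbf x$ equal to the boundary terms on $\partial\mathcal C_h\cap\partial B_h$, expands the integrand at the apex via Sobolev embedding and the H\"older continuity of $f_1,f_2$, and then invokes the estimates of Lemma~\ref{lem:cgoest} exactly as in Theorem~\ref{thm:1}. Your version is in fact slightly more careful than the paper's sketch (which leaves the final limiting step implicit), and your identification of $\partial\mathcal C_h\cap\partial B_h(\mathbf 0)$ as the surviving boundary piece is the correct reading.
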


\begin{proof} 
The proof of this lemma is similar to that for Theorem \ref{thm:1}. We sketch the argument in what follows. Let $u_0$ be given \eqref{eq:cgo} by letting $\lambda=0$. By virtue of \eqref{eq:lem32 eq} and Green's formula, one has
\begin{equation}\label{eq:lemma 32 cgogreen}
\int_{\mathcal C_h} (f_1(\mathbf x,u)-f_2(\mathbf x,v))u_0\mathrm d \mathbf x =\int_{\partial C_h\backslash \partial B_h(\mathbf x_0) }u_0\partial_\nu(u-v)-(u-v)\partial_\nu u_0\mathrm d\sigma. 
\end{equation}
By Sobolev's embedding theorem,  we have  $u,v\in C^\beta (\mathcal C_h)$ ($\beta\in (0,1]$ for $n=2$ and $\beta\in (0,1/2]$ for $n=3$) since $u,v\in H^2(\mathcal C_h)$. By further using the H\"older continuity of $f_m(\mathbf{x},\cdot)$, $m=1,2$, it yields that following expansions:
\begin{align}
F(\mathbf x):=f_1(\mathbf x,u)-f_2(\mathbf x,v)&=f_1(\mathbf x_0,u(\mathbf x_0))-f_2(\mathbf x_0,v(\mathbf x_0))+\delta_{f_1(\mathbf x,u)-f_2(\mathbf x,v)}(\mathbf x)\notag,\\
 \vert \delta _{f_1(\mathbf x,u)-f_2(\mathbf x,v)}(\mathbf x)\vert &\leq \| F(\mathbf x)\|_{C^\alpha(\mathcal C_h)}\vert \mathbf x\vert ^\alpha \label{eq:lem 32 alphaex},
\end{align}
where $\alpha\in (0, 1)$ depending on the H\"older indices $\gamma$ and $\beta$. 

In view of \eqref{eq:lemma 32 cgogreen} and \eqref{eq:lem 32 alphaex}, by virtue of \eqref{eq:conicest1}, we can follow the similar argument in the proof of Theorem \ref{thm:1} to prove this lemma. 
\end{proof}

\begin{proof}[Proof of Theorem~\ref{thm:simul1}]
Let $(D_m; f_m)$, $m=1,2$, be two anomalous inclusions as described in the statement of the theorem. Assume that
\begin{equation}\label{eq:pp1}
f_m(\mathbf x, u)=\sum_{j=1}^{N_m} \lambda_j^{(m)} u^j,\quad \lambda^{(m)}_j\in\mathbb{C},\ \ m=1,2.
\end{equation}
By introducing zero coefficients if necessary, we can assume that $N_1=N_2$ and set $N:=N_1=N_2$. 
We also assume that
\begin{equation}\label{eq:pp2}
\Lambda_{D_1, f_1}(\psi_j)=\Lambda_{D_2, f_2}(\psi_j), \ \ \psi_j\in\mathscr{H}, \ \ j=1,2,\ldots,N. 
\end{equation}

First, by following a similar argument to the proofs of Theorems~\ref{thm:3} and \ref{thm:finiteconic}, and using the first admissibility condition in \eqref{eq:aa3}, one can show that
\begin{equation}\label{eq:aa4}
D_1=D_2. 
\end{equation}

Set $D=D_1=D_2$ and let $\mathcal{C}_h$ be a corner on $\partial D$ with the apex being $\mathbf{x}_0$. By \eqref{eq:pp2}, we have
\begin{equation}\label{eq:aa5}
\begin{cases}
\Delta u^{(1)}_{\psi_j}+f_1(\mathbf x, u^{(1)}_{\psi_j})=0 &\quad \mbox{in}\ \ \mathcal{C}_h,\\
\Delta u^{(2)}_{\psi_j}+f_2(\mathbf x, u^{(2)}_{\psi_j})=0 & \quad\mbox{in}\ \ \mathcal{C}_h,\\
u_{\psi_j}^{(1)}=u_{\psi_j}^{(2)},\quad \partial_\nu u_{\psi_j}^{(1)}=\partial_\nu u_{\psi_j}^{(2)} & \quad\mbox{on}\ \ \partial\mathcal{C}_h\backslash \partial B_h ,
\end{cases}
\end{equation}
for $j=1,2,\ldots, N$, where $u_{\psi_j}^{(m)}$ signifies the solution to \eqref{eq:helm1} associated with $f_m$ and $\psi_j$, $m=1, 2$ and $1\leq j\leq N$. By Lemma~\ref{lem:aux1}, we readily have
\begin{equation}\label{eq:aa6}
\sum_{j=1}^N \lambda_j^{(1)} [u_{\psi_i}^{(1)}(\mathbf{x}_0)]^j-\sum_{j=1}^N \lambda_j^{(2)} [u_{\psi_i}^{(2)}(\mathbf{x}_0)]^j=0,\quad i=1,2,\ldots, N. 
\end{equation}
On the other hand, by \eqref{eq:aa5}, we note that
\begin{equation}\label{eq:aa7}
u_{\psi_i}^{(1)}(\mathbf{x}_0)=u_{\psi_i}^{(2)}(\mathbf{x}_0):=u_{\psi_i}(\mathbf{x}_0),\quad i=1,2,\ldots, N. 
\end{equation}
By combining \eqref{eq:aa6} and \eqref{eq:aa7}, we readily have
\begin{equation}\label{eq:aa8}
\sum_{j=1}^N \big(\lambda_j^{(1)}-\lambda_j^{(2)} \big) [u_{\psi_i}(\mathbf{x}_0)]^j=0,
\end{equation}
which together with the second admissibility condition in \eqref{eq:aa3} readily yields that
\[
\lambda_j^{(1)}=\lambda_j^{(2)},\quad j=1,2,\ldots, N. 
\]

The unique determination for the support and its physical content   of an admissible inclusion $(D;f)$ of corona shape as described in Definition~\ref{def:cone3} by $N$ measurements can be proved in a similar way, where $N$ is an a-prior parameter of $f$ with the form \eqref{eq:form2}.

The proof is complete. 
\end{proof}

\section{Determining embeded nonlinear anomalies}

In this  section  we consider the determination of the shape and physical parameters of the embedded nonlinear anomalies by minimal  measurements,  which have a polygonal or polyhedral nest structure. We first introduce several definitions. 

\begin{defn}\label{def:1}
$D$ is said to have a polygonal-nest  or polyhedral-nest partition if there exist $\Sigma_\ell$, $\ell=1, 2, \ldots, N$, $N\in \mathbb{N}$, such that each $\Sigma_\ell$ is an open convex simply-connected polygon or polyhedron and
\begin{equation}\label{eq:def1}
\Sigma_N\Subset\Sigma_{N-1}\Subset\cdots\Subset\Sigma_2\Subset\Sigma_1=D. 
\end{equation}
\end{defn}

\begin{figure}[h]
	\centerline{\includegraphics[width=0.4\linewidth]{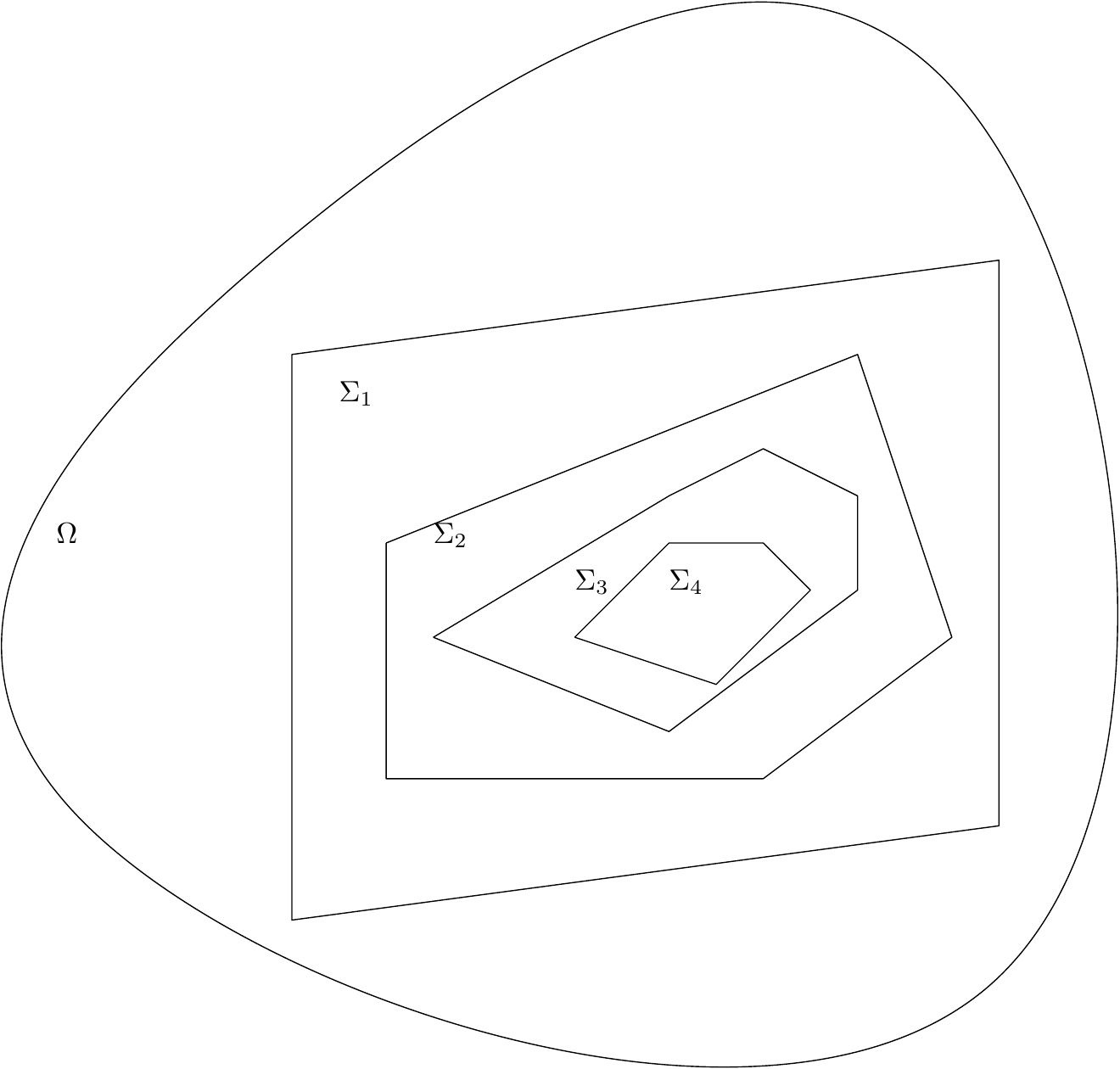}}
	\caption{Schematic  illustration  of the polygonal-nest structure.}
\end{figure}

In the follow two definitions, we introduce an anomalous inclusion possessing  a polygonal-nest or polyhedral-nest structure of the class $\mathcal A$ or $\mathcal B$, respectively.

\begin{defn}\label{def:3}
Let $(D; f)\in\mathscr{A}$ be an anomalous inclusion. It is said to possess a polygonal-nest or polyhedral-nest structure of the class $\mathcal A$ if the following conditions are fulfilled:
\begin{enumerate}
\item $D$ has a polygonal-nest or polyhedral-nest partition as described in Definition~\ref{def:1};

\item each $\Sigma_\ell$ is an anomalous inclusion such that
\begin{equation}
\begin{split}
\label{eq:f sec4}
	f(\mathbf x ,u)\big|_{ U_\ell}&=\sum_{j=1}^{M_{\ell} } \lambda_j^{(\ell)} u^j,\quad \lambda_j\in\mathbb{C},\quad U_\ell:=\Sigma_\ell\backslash\overline{\Sigma_{\ell+1}},\quad M_\ell \in \mathbb N, \quad 1\leq \ell\leq N,
\end{split}
\end{equation}
where for any $\ell \in \{1,\ldots,N-1\}$, it holds that
\begin{align}\notag
	\sum_{j=1}^{M_{\ell} } \lambda_j^{(\ell)} t^j\neq \sum_{j=1}^{M_{\ell+1} } \lambda_j^{(\ell+1)} t^j,\quad t\in \mathbb C. 
\end{align}

\end{enumerate}

\end{defn}

\begin{defn}\label{def:4}
Let $(D; f)\in\mathscr{A}$ be an anomalous inclusion. It is said to possess a polygonal-nest or polyhedral-nest structure of the class $\mathcal B$ if the following conditions are fulfilled:
\begin{enumerate}
\item $D$ has a polygonal-nest or polyhedral-nest partition as described in Definition~\ref{def:1};

\item each $\Sigma_\ell$ is an anomalous inclusion such that
\begin{equation}
\begin{split}
\label{eq:f sec5}
	f(\mathbf x ,u)\big|_{ U_\ell}&=\lambda_\ell  u,\quad \lambda_\ell \in\mathbb{C},\quad U_\ell:=\Sigma_\ell\backslash\overline{\Sigma_{\ell+1}}, \quad 1\leq \ell\leq N-1, \\  
	f(\mathbf x ,u)\big|_{ \Sigma_N}&=f_N(\mathbf x, u),\quad f_N\in \mathscr{A},
\end{split}
\end{equation}
where 
\begin{equation}\label{eq:lambda  cond}
	\lambda_\ell \neq \lambda_{\ell+1},\quad \ell=1,\ldots, N-2. 
\end{equation}


\end{enumerate}

\end{defn}

We shall give the unique shape  and physical parameter determination for two admissible classes introduced in Theorems \ref{thm:41} and \ref{cor:41} by minimal boundary measurements under the following admissible assumption. 

\medskip

 \noindent{\bf Assumption C.}~Let $(D; f)$ be described in  Definition \ref{def:3}, where $f$ has the form \eqref{eq:f sec4}. We say that $\psi_j^{(\ell)}\in H^{1/2}(\partial\Omega)$, $j=1,2,\ldots, M_{\ell}$, $\ell=1,\ldots,N$, are admissible and write $\psi_j^{(\ell)}\in \mathscr{C}$ if the corresponding solutions to \eqref{eq:helm1}, written as $u_{\psi_j^{(\ell)}}$ in what follows, fulfill the following condition:
\begin{equation}\label{eq:aa sec 4}
\begin{split}
&\lambda u_{\psi_j^{(1)}}(\mathbf y_c)-f(\mathbf y_c, u_{\psi_j^{(1)}}(\mathbf y_c)) \big|_{U_{1}}\neq 0,\ \ 1\leq j\leq M_1,\quad  \mathbf \forall \mathbf y_c\in \mathcal{V}( \partial \Sigma_1),  \\
&\sum_{m=1}^{M_\ell}\lambda_m^{(\ell-1)}  u^m_{\psi_j^{(\ell  )}}(\mathbf x_c)-f(\mathbf x_c, u_{\psi_j^{(\ell)}}(\mathbf x_c))\big|_{U_{\ell}}\neq 0,\  1\leq j\leq M_{\ell}, \ \forall  \mathbf{x}_c\in \mathcal{V}( \partial \Sigma_\ell),\  \ell=2,\ldots,N-1, \\
& \prod_{1\leq i\leq j\leq M_\ell } \left(u_{\psi_j^{(\ell)}}(\mathbf{x}_c)-u_{\psi_i^{(\ell)}}(\mathbf{x}_c)\right)\neq 0,\quad \forall  \mathbf{x}_c\in \mathcal{V}( \partial \Sigma_\ell),	\quad  \ell=1,\ldots,N, 
\end{split}
\end{equation}
where $\mathcal{V}( \partial \Sigma_\ell)$ is the vertex set of $\Sigma_\ell$, $\ell=1,\ldots,N$.


\medskip

Similar to Assumptions A and B before, we shall show in Section~\ref{sect:5} that Assumption C can hold in a certain generic scenario of practical interest.

We are now in a position to present the main  theorem of this section. 

\begin{theorem}\label{thm:41}
Let $(D; f)\in\mathscr{A}$,  where $D$ has a  polygonal-nest structure in $\mathbb{R}^2$ or polyhedral-nest structure  in $\mathbb R^3$ of the class $\mathcal A$. Assume that $f$ is of  the form \eqref{eq:f sec4}. Then both $D$ and $f$ are uniquely determined by $\sum_{\ell=1}^N M_\ell$ boundary measurements $\Lambda_{D, f}(\psi_j^{(\ell)})$ with $\psi_j^{(\ell)} \in \mathscr{C}$, $j=1,2,\ldots, M_\ell $  and $\ell=1,\ldots, N$. 
\end{theorem}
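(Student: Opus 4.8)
\textbf{Proof proposal for Theorem~\ref{thm:41}.}

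The plan is to proceed by induction on the layers $\Sigma_1 \supset \Sigma_2 \supset \cdots \supset \Sigma_N$, peeling off one layer at a time and at each stage using the corner-singularity machinery of Theorem~\ref{thm:1}, Lemma~\ref{lem:aux1}, and the contradiction scheme of Theorem~\ref{thm:2} (global version: Theorems~\ref{thm:3} and~\ref{thm:finiteconic}). Let $(D_m; f_m)$, $m=1,2$, be two admissible inclusions of class $\mathcal A$ producing the same $\sum_{\ell=1}^N M_\ell$ boundary measurements $\Lambda_{D_m, f_m}(\psi_j^{(\ell)})$ with $\psi_j^{(\ell)}\in\mathscr{C}$. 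Write $D_m = \cup_\ell U_\ell^{(m)}$ with nested polygons/polyhedra $\Sigma_\ell^{(m)}$ and coefficient vectors $\lambda_j^{(\ell),(m)}$; by adjoining zero coefficients we may assume $M_\ell^{(1)}=M_\ell^{(2)}=:M_\ell$ and that both have the same number of layers $N$ (a short argument: if the layer counts differed, the innermost ``extra'' layer would create a corner across which the two solutions satisfy a transmission problem of the type in Theorem~\ref{thm:1}, and the first admissibility condition in \eqref{eq:aa sec 4} would be violated).

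\emph{Base step: recovering $\Sigma_1 = D$ and the outermost coefficients $\lambda_j^{(1)}$.} Since $D_1,D_2$ are convex polygons/polyhedra, apply verbatim the argument of Theorem~\ref{thm:3} (equivalently Theorem~\ref{thm:2}): if $\Sigma_1^{(1)}\neq\Sigma_1^{(2)}$, then $D_1\Delta D_2$ has a corner on the boundary of the component of $\Omega\setminus\overline{D_1\cup D_2}$ touching $\partial\Omega$, at whose apex $\mathbf y_c$ Theorem~\ref{thm:1} forces $\lambda u_{\psi_j^{(1)}}(\mathbf y_c) - f(\mathbf y_c, u_{\psi_j^{(1)}}(\mathbf y_c))\big|_{U_1} = 0$, contradicting the first line of \eqref{eq:aa sec 4}. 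Hence $\Sigma_1:=\Sigma_1^{(1)}=\Sigma_1^{(2)}$. Now at each vertex $\mathbf x_c\in\mathcal V(\partial\Sigma_1)$ the two solutions $u_{\psi_i^{(1)}}^{(m)}$ agree and satisfy, in the corner $\mathcal C_h = \Sigma_1\cap B_h(\mathbf x_c)$, the system \eqref{eq:lem32 eq} with $f_1 = \sum_j \lambda_j^{(1),(1)} u^j$ and $f_2 = \sum_j \lambda_j^{(1),(2)} u^j$; Lemma~\ref{lem:aux1} yields $\sum_{j=1}^{M_1}(\lambda_j^{(1),(1)}-\lambda_j^{(1),(2)})[u_{\psi_i^{(1)}}(\mathbf x_c)]^j = 0$ for $i=1,\dots,M_1$. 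The second admissibility condition in \eqref{eq:aa sec 4} (the Vandermonde-type product being nonzero) makes this linear system for the coefficient differences invertible, so $\lambda_j^{(1),(1)} = \lambda_j^{(1),(2)}$ for all $j$.

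\emph{Inductive step.} Suppose $\Sigma_1,\dots,\Sigma_{\ell-1}$ and the coefficients on $U_1,\dots,U_{\ell-1}$ have been shown to coincide for both inclusions. Then $f_1 = f_2$ on $\Omega\setminus\overline{\Sigma_\ell^{(1)}\cup\Sigma_\ell^{(2)}}$ (in the region exterior to the $\ell$-th layer for both), so by unique continuation the solutions $u_{\psi_j^{(\ell)}}^{(1)}$ and $u_{\psi_j^{(\ell)}}^{(2)}$ coincide there; in particular the Cauchy data match on $\partial\Sigma_\ell^{(m)}$. If $\Sigma_\ell^{(1)}\neq\Sigma_\ell^{(2)}$, a corner $\mathcal C_h\Subset \Sigma_\ell^{(2)}\setminus\overline{\Sigma_\ell^{(1)}}$ (or vice versa) appears; inside it $u_{\psi_j^{(\ell)}}^{(2)}$ solves $\Delta u + \sum_{m=1}^{M_\ell}\lambda_m^{(\ell)} u^m = 0$ while $u_{\psi_j^{(\ell)}}^{(1)}$ solves $\Delta v + \sum_{m=1}^{M_{\ell-1}}\lambda_m^{(\ell-1)} v^m = 0$ (using that this corner lies in $U_{\ell-1}$ of the first inclusion), with matching Cauchy data on $\partial\mathcal C_h\setminus\partial B_h$. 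Lemma~\ref{lem:aux1} gives $\sum_m \lambda_m^{(\ell-1)} u^m_{\psi_j^{(\ell)}}(\mathbf x_c) - f(\mathbf x_c, u_{\psi_j^{(\ell)}}(\mathbf x_c))\big|_{U_\ell} = 0$, contradicting the second line of \eqref{eq:aa sec 4}. Hence $\Sigma_\ell := \Sigma_\ell^{(1)} = \Sigma_\ell^{(2)}$, and then the same Vandermonde argument as in the base step, applied at the vertices of $\Sigma_\ell$ via Lemma~\ref{lem:aux1} and the third line of \eqref{eq:aa sec 4}, recovers $\lambda_j^{(\ell),(1)} = \lambda_j^{(\ell),(2)}$. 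After $N$ steps we obtain $D_1 = D_2$ and $f_1 = f_2$, which is the assertion.

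\emph{Main obstacle.} The delicate point is the shape-matching at each inductive stage: one must guarantee that when $\Sigma_\ell^{(1)}\neq\Sigma_\ell^{(2)}$ there genuinely exists a corner of one lying strictly inside the ``smooth'' region of the other, so that the transmission system \eqref{eq:lem32 eq} is set up with the correct nonlinearities on each side — this uses both the convexity/nest structure (a vertex of a convex polygon cannot be absorbed into the interior of another convex polygon without protruding) and the fact that the \emph{exterior} layer's coefficients are already known to agree (so the two PDEs genuinely differ across that corner, by the nondegeneracy hypothesis $\sum_j \lambda_j^{(\ell-1)} t^j \neq \sum_j \lambda_j^{(\ell)} t^j$). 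One also has to handle the bookkeeping when the nominal layer counts of the two inclusions differ, which is absorbed by the zero-padding convention together with the same corner argument applied to the innermost genuine layer. Everything else is a routine iteration of already-established lemmas.
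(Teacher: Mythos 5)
Your proposal is correct and follows essentially the same route as the paper: zero-padding the coefficient vectors, an induction over the nested layers in which the shape of $\Sigma_\ell$ is matched via the corner transmission problem of Lemma~\ref{lem:aux1} against the second admissibility condition in \eqref{eq:aa sec 4}, the coefficients are then recovered by the Vandermonde-type nondegeneracy in the third condition, and the equality of layer counts is obtained by the same corner argument applied to a putative extra innermost layer. No substantive difference from the paper's argument.
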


\begin{proof}
 Assume that $(D_m; f_m)$ ($m=1,2$) are two anomalous inclusions as described in the statement of the theorem. Namely, 
\begin{equation}\label{eq:two d}
\begin{split}
&\Sigma_{N_1,(m)} \Subset\Sigma_{N_1-1,(m)}\Subset\cdots\Subset\Sigma_{2,(m)}\Subset\Sigma_{1,(m)}=D_m,\\
&f_m(\mathbf x ,u)\big|_{ U_{\ell,(m)}}=\sum_{j=1}^{M_{\ell,(m)} } \lambda_{j,(m)}^{(\ell)} u^j,\, \lambda_{j,(m)}^{(\ell)}\in\mathbb{C},\, M_{\ell,(m)} \in \mathbb N, \, 1\leq \ell\leq N_m, 
\end{split}
\end{equation}
where each $\Sigma_{\ell,(m)}$ is an open convex simply-connected polygon or polyhedron, and $ U_{\ell,(m)}:=\Sigma_{\ell,(m)}\backslash\overline{\Sigma_{\ell+1,(m)}}$.  Without loss of  generality we  assume that $N_1\leq N_2$. By  introducing zero coefficients if necessary, in view of \eqref{eq:two d}, one can readily know that 
\begin{equation}\label{eq:def1}
\begin{split}
&f_m(\mathbf x ,u)\big|_{ U_{\ell,(m)}}=\sum_{j=1}^{M_{\ell} } \lambda_{j,(m)}^{(\ell)} u^j,\, \lambda_{j,(m)}^{(\ell)}\in\mathbb{C}, \, 1\leq \ell\leq N_1, 
\end{split}
\end{equation}
where $M_{\ell} =\max\{M_{\ell,(1)}, M_{\ell,(2)}  \}$. 

Suppose that
\begin{align}\label{eq:psi assum}
	\Lambda_{D_1, f_1}(\psi_j^{(\ell)})=\Lambda_{D_2, f_2}(\psi_j^{(\ell)})\quad\mbox{for $\psi_j^{(\ell)} \in \mathscr{C}$},\quad j=1,\ldots,M_\ell,\quad \ell=1,\ldots, N_1. 
\end{align}

In the following we prove this theorem by mathematical induction. 
Under the assumption \eqref{eq:psi assum}, according to Theorem \ref{thm:simul1}, it holds  $\partial D_1=\partial D_2$,  which implies that $\partial \Sigma_{1,(1)}=\partial \Sigma_{1,(2)}$. Furthermore,  from Theorem \ref{thm:simul1}, one can claim that 
$$
\lambda_{j,(1)}^{(1)}=\lambda_{j,(2)}^{(1)},\quad j=1,2,\ldots, M_{1}. 
$$
Let $u^{(m)}_{\psi_j^{(\ell)}}$ be the solution  of \eqref{eq:helm1} associated with $(D_m;f_m)$ and $\psi_j^{(\ell)}$. Hence by unique continuation, one has
$$
u^{(1)}_{\psi_j^{(2) } }  \Big|_{U_1 } =u^{(2)}_{\psi_j^{(2) } }  \Big|_{U_1 } \mbox{  in } U_1=\Sigma_1\backslash \overline{\Sigma_2 }.
$$

Suppose that there exits an index $n_* \in \mathbb{N} \backslash\{1\} $ such that 
\begin{equation}\label{eq:lambda 48}
	\partial  \Sigma_{\ell}:= \partial \Sigma_{\ell,(1)}=\partial \Sigma_{\ell,(2)},   \quad \lambda_{j,(1)}^{(\ell)}=\lambda_{j,(2)}^{(\ell)},\quad j=1,\ldots,M_\ell, \quad {\ell=2,\ldots,n_*-1. }
\end{equation}
Therefore we can recursively prove that
\begin{equation}\label{eq:u1l+1}
\begin{split}
	&u^{(1)}_{\psi_j^{(\ell+1) } }  \Big|_{U_\ell   } =u^{(2)}_{\psi_j^{(\ell+1) } }  \Big|_{U_\ell } \mbox{  in } U_\ell =\Sigma_\ell \backslash \overline{\Sigma_{\ell+1} },\ j=1,2,\ldots, M_{\ell +1},  \ \ell=1,2,\ldots, n_*-2\\
	&u^{(1)}_{\psi_j^{(n_*) } }  \Big|_{U_{n_*-1,(m)}   } =u^{(2)}_{\psi_j^{(n_*) } }  \Big|_{U_{n_*-1,(m)} } \mbox{  in } U_{n_*-1,(m)} =\Sigma_{n_*-1} \backslash \overline{\Sigma_{n_*,(m)} },\, j=1,\ldots, M_{n_*},  \, m=1,2, 
	\end{split}
\end{equation}
by using \eqref{eq:psi assum}

Assume that $\partial \Sigma_{n_*,(1)}\neq \partial \Sigma_{n_*,(2)}$. By the convexity  of $\Sigma_{n_*,(m)}$ ($m=1,2$), without loss of generality, we can suppose that  there  exists a polyhedral corner $\mathcal C_h$ with the apex $\mathbf x_c$ satisfying $\mathcal C_h \subset  \Sigma_{n_*,(1)}\backslash\overline{\Sigma_{n_*,(2)} } $. According to \eqref{eq:lambda 48} and  \eqref{eq:u1l+1}, it yields that
\begin{equation} \notag 
\begin{cases}
\Delta u^{(1)}_{\psi_i^{(n_*)}}+\sum_{j=1}^{M_{n_*} } \lambda_{j,(1)}^{(n_*)} \left(u^{(1)}_{\psi_i^{(n_*)}}\right)^j=0 &\quad \mbox{in}\ \ \mathcal{C}_h,\\
\Delta u^{(2)}_{\psi_i^{(n_*)}}+\sum_{j=1}^{M_{n_*} } \lambda_{j,(1)}^{(n_*-1)} \left(u^{(2)}_{\psi_i^{(n_*)}}\right)^j=0 & \quad\mbox{in}\ \ \mathcal{C}_h,\\
u_{\psi_i^{(n_*)}}^{(1)}=u_{\psi_i^{(n_*)}}^{(2)},\quad \partial_\nu u_{\psi_i^{(n_*)}}^{(1)}=\partial_\nu u_{\psi_i^{(n_*)}}^{(2)} & \quad\mbox{on}\ \ \partial\mathcal{C}_h\backslash \partial B_h ,
\end{cases}
\end{equation}
where $u_{\psi_i^{(n_*)}}^{(m)}\in H^2(\mathcal C_h)$ by noting interior elliptic regularity. Using Lemma \ref{lem:aux1}, one  has
$$
\sum_{j=1}^{M_{n_*} } \lambda_{j,(1)}^{(n_*-1)} \left(u^{(1)}_{\psi_i^{(n_*)}}\right)^j- \sum_{j=1}^{M_{n_*} } \lambda_{j,(1)}^{(n_*)} \left(u^{(1)}_{\psi_i^{(n_*)}}\right)^j=0
$$
which contradicts  to the second admissible condition in \eqref{eq:aa sec 4}. Therefore, it is ready to know that $\partial \Sigma_{n_*}:=\partial \Sigma_{n_*,(1)}=\partial \Sigma_{n_*,(2)}$.

Let $\mathcal{C}_h$ be a corner on $\partial \Sigma_{n_*}=\partial \Sigma_{n_*,(1)}=\partial \Sigma_{n_*,(2)}  $ with the apex  $\mathbf{x}_0$. According to \eqref{eq:u1l+1}, it yields that
\begin{equation}\label{eq:aa5 s4}
\begin{cases}
\Delta u^{(1)}_{\psi_i^{(n_*)}}+\sum_{j=1}^{M_{n_*} } \lambda_{j,(1)}^{(n_*)} \left(u^{(1)}_{\psi_i^{(n_*)}}\right)^j=0 &\quad \mbox{in}\ \ \mathcal{C}_h,\\
\Delta u^{(2)}_{\psi_i^{(n_*)}}+\sum_{j=1}^{M_{n_*} } \lambda_{j,(2)}^{(n_*)} \left(u^{(2)}_{\psi_i^{(n_*)}}\right)^j=0 & \quad\mbox{in}\ \ \mathcal{C}_h,\\
u_{\psi_i^{(n_*)}}^{(1)}=u_{\psi_i^{(n_*)}}^{(2)},\quad \partial_\nu u_{\psi_i^{(n_*)}}^{(1)}=\partial_\nu u_{\psi_i^{(n_*)}}^{(2)} & \quad\mbox{on}\ \ \partial\mathcal{C}_h\backslash \partial B_h ,
\end{cases}
\end{equation}
for $i=1,2,\ldots, M_{n_*}$, where $u_{\psi_i^{(n_*)}}^{(m)}$ signifies the solution to \eqref{eq:helm1} associated with $f_m$ and $\psi_i^{(n_*)}$,  and $1\leq i\leq M_{n_*}$. Using Lemma~\ref{lem:aux1}, one readily has
\begin{equation}\label{eq:aa6m s4}
\sum_{j=1}^{M_{n_*}}\left( \lambda_{j,(1)}^{(n_*)}- \lambda_{j,(2)}^{(n_*)} \right) [u_{\psi_i^{(n_*)}}^{(1)}(\mathbf{x}_0)]^j=0,\quad i=1,2,\ldots, M_{n_*}. 
\end{equation}
by noting the transmission condition in  \eqref{eq:aa5 s4}. By virtue of the third admissible condition in \eqref{eq:aa sec 4} together with \eqref{eq:aa6m s4}, it arrives that 
\[
\lambda_{j,{(1)}}^{(n_*)}=\lambda_{j,{(2)}}^{(n_*)},\quad j=1,2,\ldots, M_{n_*}. 
\]
Moreover,  by \eqref{eq:psi assum} one conclude that
$$
u^{(1)}_{\psi_j^{(n_*+1) } }  \Big|_{U_{n_*, (m)}  } =u^{(2)}_{\psi_j^{(n_*+1) } }  \Big|_{U_{n_*,(m)} } \mbox{  in } U_{n_*,(m)} =\Sigma_{n_*} \backslash \overline{\Sigma_{n_*+1, (m)} }, \, j=1,\ldots, M_{n_*+1}, \, m=1,2. 
$$

 We can prove $N_1=N_2$ by using the contradiction. Indeed, we assume that $N_1<N_2$. Therefore,  there exits a corner point $\mathbf x_c \in \partial \Sigma_{N_1+1,(2)}$ lying inside of  $\Sigma_{N_1,(2)}$. From Lemma \ref{lem:aux1}, we can prove that
 $$
 \sum_{j=1}^{M_{N_1}}\lambda_j^{(N_1)} \left( u^{(2)}_{\psi_i^{(N_1  )}}(\mathbf x_c)\right)^j-f(\mathbf x_c, u^{(2)}_{\psi_i^{(N_1)}}(\mathbf x_c))\big|_{U_{N_1+1,(2)}}= 0,\ \ 1\leq i\leq M_{N_1},
  $$
 which contradicts to the second admissible condition of \eqref{eq:aa sec 4}.

The proof is complete. 
\end{proof}

In the next theorem we prove that an anomalous inclusion possessing a polygonal-nest structure in $\mathbb{R}^2$ or polyhedral-nest structure  in $\mathbb R^3$ of the class $\mathcal B$ can be uniquely determined  by a single boundary measurement fulfilling Assumption D introduced below.

\medskip

 \noindent{\bf Assumption D.}~Let $(D; f)$ be described in  Definition \ref{def:4}, where $f$ has the form \eqref{eq:f sec5}. We say that $\psi \in H^{1/2}(\partial\Omega)$ is admissible and write $\psi \in \mathscr{D}$ if the corresponding solutions to \eqref{eq:helm1}, written as $u_{\psi}$ in what follows, fulfill the following condition:
\begin{equation}\label{eq:aa sec 5}
\begin{split}
&\lambda u_{\psi }(\mathbf y_c)-f(\mathbf y_c, u_{\psi }(\mathbf y_c)) \big|_{U_{1}} \neq 0, \quad \forall \mathbf y_c \in\mathcal{V}( \partial \Sigma_1),    \\
& \lambda_{\ell-1}  u_{\psi }(\mathbf x_c)-f(\mathbf x_c, u_{\psi}(\mathbf x_c))\big|_{U_{\ell}}\neq 0, \quad \forall \mathbf x_c \in \mathcal{V}(\Sigma_{\ell}  ),  \quad  \ell=2,\ldots,N-1, \\
& \lambda_\ell  u_{\psi }(\mathbf x_c)-f_N(\mathbf x_c, u_{\psi}(\mathbf x_c)) \neq 0, \quad \forall  \mathbf{x}_c\in \mathcal{V}( \partial \Sigma_N),  \\
& u_{\psi }(\mathbf{x}_c) \neq 0, \quad \forall  \mathbf{x}_c\in \mathcal{V}( \partial \Sigma_\ell),\quad \ell=1,\ldots,N,
\end{split}
\end{equation}
where $\mathcal{V}( \partial \Sigma_\ell)$ is the vertex set of $\Sigma_\ell$, $\ell=1,\ldots,N.$ 

\medskip

In Section~\ref{sect:5}, we shall show that in a certain generic scenario of practical interest that Assumption D can hold.

\begin{theorem}\label{cor:41}
Let $(D; f)\in\mathscr{A}$,  where $D$ has a  polygonal-nest structure in $\mathbb{R}^2$ or polyhedral-nest structure  in $\mathbb R^3$ of the class $\mathcal B$. Assume that $f$ is of  the form \eqref{eq:f sec5}. Then both $D$ and the  physical parameters of $f$ $\lambda_\ell$ ($\ell=1,\ldots,N-1$) are uniquely determined by a single boundary measurement $\Lambda_{D, f}(\psi)$ with $\psi \in \mathscr{D}$. 
\end{theorem}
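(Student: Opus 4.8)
The plan is to mimic the inductive argument of Theorem~\ref{thm:41}, but now the nested layers $U_1,\dots,U_{N-1}$ carry purely linear contents $\lambda_\ell u$ while the innermost core $\Sigma_N$ carries the general admissible nonlinearity $f_N$; accordingly a \emph{single} measurement $\psi\in\mathscr{D}$ suffices to peel off one layer at a time. Suppose two such inclusions $(D_1;f_1)$ and $(D_2;f_2)$ of class $\mathcal B$ produce $\Lambda_{D_1,f_1}(\psi)=\Lambda_{D_2,f_2}(\psi)$ for a fixed $\psi\in\mathscr{D}$, with nests $\Sigma_{N_m,(m)}\Subset\cdots\Subset\Sigma_{1,(m)}=D_m$ and layer coefficients $\lambda_\ell^{(m)}$. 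First I would establish $\partial\Sigma_{1,(1)}=\partial\Sigma_{1,(2)}$: if not, convexity of the outermost polygons/polyhedra forces a corner $\mathcal C_h$ of one lying in the connected exterior component of the other, so by unique continuation the hypotheses of Theorem~\ref{thm:1} are met at that corner (with the background constant $\lambda$), giving $\lambda u_\psi(\mathbf x_0)-f(\mathbf x_0,u_\psi(\mathbf x_0))|_{U_1}=0$, contradicting the first condition in \eqref{eq:aa sec 5}. Hence $D_1=D_2$ at the top level and, by unique continuation across $\partial\Sigma_1$, $u_\psi^{(1)}=u_\psi^{(2)}$ in $U_1$.

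The core of the argument is the induction step. Assume $\partial\Sigma_{\ell}:=\partial\Sigma_{\ell,(1)}=\partial\Sigma_{\ell,(2)}$, that $\lambda_1^{(1)}=\lambda_1^{(2)},\dots,\lambda_{\ell-1}^{(1)}=\lambda_{\ell-1}^{(2)}$, and that $u_\psi^{(1)}=u_\psi^{(2)}$ on $U_\ell$ (or on $\Sigma_\ell\setminus\overline{\Sigma_{\ell+1,(m)}}$ before the next boundary is matched). To recover $\partial\Sigma_{\ell+1}$: if $\partial\Sigma_{\ell+1,(1)}\neq\partial\Sigma_{\ell+1,(2)}$, pick a corner $\mathcal C_h$ with apex $\mathbf x_c$ inside, say, $\Sigma_{\ell+1,(1)}\setminus\overline{\Sigma_{\ell+1,(2)}}$; then $u_\psi^{(1)}$ solves $\Delta u+\lambda_{\ell+1}^{(1)}u=0$ there (next layer of $D_1$) while $u_\psi^{(2)}$ solves $\Delta v+\lambda_\ell v=0$ (still in layer $U_\ell$ of $D_2$), with matching Cauchy data on $\partial\mathcal C_h\setminus\partial B_h$ from the induction hypothesis; Lemma~\ref{lem:aux1} yields $\lambda_{\ell}u_\psi(\mathbf x_c)-\lambda_{\ell+1}^{(1)}u_\psi(\mathbf x_c)=0$, i.e. $(\lambda_\ell-\lambda_{\ell+1}^{(1)})u_\psi(\mathbf x_c)=0$, which contradicts $\lambda_\ell\neq\lambda_{\ell+1}$ in \eqref{eq:lambda cond} together with $u_\psi(\mathbf x_c)\neq 0$ from the last line of \eqref{eq:aa sec 5}. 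So $\partial\Sigma_{\ell+1,(1)}=\partial\Sigma_{\ell+1,(2)}$. Now on a corner $\mathcal C_h$ of this common boundary, $u_\psi^{(1)}$ solves $\Delta u+\lambda_{\ell+1}^{(1)}u=0$ and $u_\psi^{(2)}$ solves $\Delta v+\lambda_{\ell+1}^{(2)}v=0$ with matching Cauchy data, so Lemma~\ref{lem:aux1} gives $(\lambda_{\ell+1}^{(1)}-\lambda_{\ell+1}^{(2)})u_\psi(\mathbf x_0)=0$, and $u_\psi(\mathbf x_0)\neq 0$ forces $\lambda_{\ell+1}^{(1)}=\lambda_{\ell+1}^{(2)}$. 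Unique continuation then propagates $u_\psi^{(1)}=u_\psi^{(2)}$ into the next layer, completing the step.

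For the innermost core one repeats the boundary-matching argument using the third line of \eqref{eq:aa sec 5}: a mismatch $\partial\Sigma_{N,(1)}\neq\partial\Sigma_{N,(2)}$ would place a corner $\mathcal C_h$ where one solution obeys the linear equation $\Delta v+\lambda_{N-1}v=0$ and the other obeys $\Delta u+f_N(\mathbf x,u)=0$; Theorem~\ref{thm:1} (applied with the constant $\lambda_{N-1}$ in the role of $\lambda$) gives $\lambda_{N-1}u_\psi(\mathbf x_c)-f_N(\mathbf x_c,u_\psi(\mathbf x_c))=0$, contradicting the admissibility condition. Finally, to rule out $N_1\neq N_2$ (say $N_1<N_2$), note that after identifying all layers of $D_1$, $D_2$ would still have an extra polygonal boundary $\partial\Sigma_{N_1+1,(2)}$ inside $\Sigma_{N_1,(1)}=\Sigma_{N_1,(2)}$; a corner there again produces, via Theorem~\ref{thm:1} or Lemma~\ref{lem:aux1}, a vanishing that contradicts \eqref{eq:aa sec 5}. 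Therefore $N_1=N_2$, $D_1=D_2$, and $\lambda_\ell^{(1)}=\lambda_\ell^{(2)}$ for $\ell=1,\dots,N-1$. The main obstacle I anticipate is purely bookkeeping: tracking exactly which equation each $u_\psi^{(m)}$ satisfies on a given corner (it depends on whether that corner sits on an already-matched boundary or strictly between two mismatched ones), and ensuring at each stage that the relevant admissibility clause in \eqref{eq:aa sec 5} is the one being contradicted; the analytic content is entirely supplied by Theorem~\ref{thm:1}, Lemma~\ref{lem:aux1}, and unique continuation, with no new estimates required.
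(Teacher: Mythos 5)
Your proposal is correct and follows essentially the same route as the paper: a layer-by-layer induction in which corner singularities are exploited via Theorem~\ref{thm:1}/Lemma~\ref{lem:aux1} to first match each interface $\partial\Sigma_\ell$ (contradicting the relevant clause of \eqref{eq:aa sec 5}, which you equivalently decompose as $\lambda_\ell\neq\lambda_{\ell+1}$ together with $u_\psi(\mathbf x_c)\neq 0$) and then the coefficient $\lambda_\ell$, with unique continuation propagating the equality of the two solutions into the next layer and the same contradiction argument forcing $N_1=N_2$. The only cosmetic difference is that the paper routes the outermost-boundary step through Theorem~\ref{thm:simul1} rather than invoking Theorem~\ref{thm:1} directly, which amounts to the same corner argument.
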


\begin{proof}
	We sketch the proof of this theorem by modifying necessary parts of the proof  of Theorem \ref{thm:41}. By contradiction, suppose that there exist two anomalous inclusions $(D_m;f_m)$ described by the statement of this theorem such that 
	\begin{align}\label{eq:psi assum nonlinear}
	\Lambda_{D_1, f_1}(\psi )=\Lambda_{D_2, f_2}(\psi )\quad\mbox{for $\psi  \in \mathscr{C}$}, 
\end{align} 
	where 
	\begin{equation}\label{eq:two d nonlinear}
\begin{split}
&\Sigma_{N_1,(m)} \Subset\Sigma_{N_1-1,(m)}\Subset\cdots\Subset\Sigma_{2,(m)}\Subset\Sigma_{1,(m)}=D_m,\\
&f_m(\mathbf x ,u)\big|_{ U_{\ell,(m)}}=\lambda_{\ell,(m)} u,\quad \lambda_{\ell ,(m)}\in\mathbb{C},\quad 1\leq \ell\leq N_m. 
\end{split}
\end{equation}
Here each $\Sigma_{\ell,(m)}$ is an open convex simply-connected polygon or polyhedron, and $ U_{\ell,(m)}:=\Sigma_{\ell,(m)}\backslash\overline{\Sigma_{\ell+1,(m)}}$.

Using the first admissible condition in \eqref{eq:aa sec 5}, under \eqref{eq:psi assum nonlinear},  from Theorem \ref{thm:simul1}, we can obtain that $\partial \Sigma_1=\partial \Sigma_2$. Once the  unique shape determination of $\partial \Sigma_1$ is derived, by using \eqref{lem:aux1} and noting the fourth admissible condition in \eqref{eq:aa sec 5},  we can prove that  $\lambda_{1,(1)}= \lambda_{1,(2)} $.   Therefore, one has $u^{(1)}_{\psi }  \big|_{U_1 } =u^{(2)}_{\psi}  \big|_{U_1 } \mbox{  in } U_1=\Sigma_1\backslash \overline{\Sigma_2 }$ by unique continuation principle, where $u^{(m)}_{\psi}$ is the solution to \eqref{eq:helm1} associated  with $(D_m;f_m)$ and $\psi$, $m=1,2$.

Following a similar argument  in the proof  of Theorem \ref{thm:41}, by virtue  of Lemma \ref{lem:aux1} and \eqref{eq:aa sec 5} we can prove that
$$
N_1=N_2:=N,\quad \Sigma_{\ell,(1)}=\Sigma_{\ell,(2)},\quad   \lambda_{\ell,(1)}=\lambda_{\ell,(2)},\quad \ell=1,\ldots,N.
$$

The proof is complete. 
\end{proof}

\section{Discussion on admissibility conditions}\label{sect:5}
In this  section we shall show that the technical {Assumptions A}, {B}, { C} and {D} introduced in the previous sections can be fulfilled under generic scenarios. 

Recall that $\Omega$ is a bounded Lipschitz domain in $\mathbb R^n$ ($n=2,3$), and $D$ is a   bounded Lipschitz domain such that $D\Subset \Omega$ and $\Omega \backslash \overline D$ is connected.  For illustrative purpose, we consider some specific nonlinear Helmholtz equations that arise in the time-harmonic wave scattering theory; see also our discussion in Introduction. It is emphasised that one can derive similar results in other setups by following similar arguments as discussed in what follows.

Let $u\in H^1(\Omega )$ be the solution to 
\begin{equation}\label{eq:sec 5}
	\begin{cases}
	\Delta u + a(\mathbf{x} , u) = 0 & \quad \mbox{ in }\quad \Omega,\\
	u = \psi(\varepsilon,k,\mathbf d ) & \quad \mbox{ on }\quad \partial\Omega,
	\end{cases}
	\end{equation}
where $a(\mathbf{x} , u)=(f(\mathbf x, u)-k^2 u)\chi_D+k^2 u $ with $k\in \mathbb R_+\cup\{0\}$ and $\psi(\mathbf x; \varepsilon,k,\mathbf d ):=\varepsilon e^{\mathrm i k \mathbf x\cdot \mathbf d}$ with $\mathbf d\in \mathbb S^{n-1}$ and $
\varepsilon \in \mathbb R_+ $ with $\varepsilon \ll 1$. 

In the following  proposition, when $f(\mathbf x, u)$ takes the form \eqref{eq:form2}, we shall prove that the solution $u$ to \eqref{eq:sec 5} can be decomposed as $u=\psi(\varepsilon,k,\mathbf d ) +v  $, where  $v$ can be viewed as a  small  perturbation.

\begin{proposition}\label{pro:51}
 Consider the semilinear elliptic equation \eqref{eq:sec 5}, where 
	$$
	f(\mathbf x, u)=\sum_{j=1}^N \lambda_j u^j,\quad \lambda_j\in\mathbb{C}. 
	$$
	Suppose that $u \in H^{1}(\Omega )$ is the solution to   \eqref{eq:sec 5}, which  satisfies $u=\psi(\mathbf x; \varepsilon,k,\mathbf d ) +v$. Then $v \in H^1(\Omega )$ fulfills
	\begin{equation}\label{eq:sec 5 v}
	\begin{cases}
	\Delta v + k^2 v \chi_{\Omega \backslash D } =( k^2 \psi -\sum\limits_{j=1}^N  \lambda_j u^j)\chi_D & \quad \mbox{ in }\quad \Omega,\\
	v = 0 & \quad \mbox{ on }\quad \partial\Omega. 
	\end{cases}
	\end{equation}
Furthermore, if
\begin{equation}\label{eq:pro51 ass}
 k = \Oh(\varepsilon^{\zeta_1} ) \quad\mbox{and} \quad |\lambda_1|= \Oh(\varepsilon^{\zeta_2} ) ,\quad \zeta_j\in \mathbb R_+,\quad j=1,2, 
\end{equation}
then it holds that
 \begin{equation}\label{eq:v est}
 	\|v\|_{H^1( \Omega )} = o(\varepsilon ), 
 \end{equation}
 where $\lim\limits_{\varepsilon \rightarrow 0}\frac{o(\varepsilon )}{\varepsilon }=0.$
	\end{proposition}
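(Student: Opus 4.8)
The plan is to establish \eqref{eq:sec 5 v} by direct substitution and then obtain the smallness estimate \eqref{eq:v est} via an energy argument combined with a continuity/fixed-point scheme. First I would substitute $u=\psi(\mathbf x;\varepsilon,k,\mathbf d)+v$ into \eqref{eq:sec 5}. Since $\Delta\psi+k^2\psi=0$ in all of $\Omega$ (as $\psi$ is a plane wave with wavenumber $k$), subtracting this from \eqref{eq:sec 5} gives $\Delta v + k^2 v - \big(f(\mathbf x,u)-k^2 u\big)\chi_D - k^2 v\,\chi_D \cdot(\text{bookkeeping})=0$; carefully collecting the terms supported in $D$ versus $\Omega\setminus\overline D$, and using $a(\mathbf x,u)=f(\mathbf x,u)$ in $D$ and $a(\mathbf x,u)=k^2 u$ outside, yields exactly $\Delta v + k^2 v\chi_{\Omega\setminus D} = \big(k^2\psi-\sum_{j=1}^N\lambda_j u^j\big)\chi_D$ in $\Omega$ with $v=0$ on $\partial\Omega$. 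This is routine.

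For the estimate, I would treat \eqref{eq:sec 5 v} as a fixed-point equation for $v$. Define the map $T$ sending $w\in H^1_0(\Omega)$ to the solution $v$ of the \emph{linear} problem $\Delta v + k^2 v\chi_{\Omega\setminus D} = \big(k^2\psi-\sum_{j=1}^N\lambda_j(\psi+w)^j\big)\chi_D$, $v|_{\partial\Omega}=0$. Because $k=\Oh(\varepsilon^{\zeta_1})$ is small, the linear operator $-\Delta-k^2\chi_{\Omega\setminus D}$ on $H^1_0(\Omega)$ is invertible (it is a small perturbation of $-\Delta$, whose first Dirichlet eigenvalue on $\Omega$ is bounded below), with operator-norm bound uniform in $\varepsilon$ for $\varepsilon$ small. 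Hence $\|v\|_{H^1(\Omega)}\lesssim \big\|k^2\psi-\sum_{j=1}^N\lambda_j(\psi+w)^j\big\|_{L^2(D)}$ (using $H^{-1}$ duality and that the right side is an $L^2$ function supported in $D$). Now I would estimate the right-hand side on the ball $\|w\|_{H^1(\Omega)}\le\varepsilon$: the term $k^2\psi$ is $\Oh(\varepsilon^{2\zeta_1})\cdot\varepsilon=o(\varepsilon)$; the linear term $\lambda_1(\psi+w)$ contributes $\Oh(|\lambda_1|)\cdot\Oh(\varepsilon)=\Oh(\varepsilon^{1+\zeta_2})=o(\varepsilon)$ by \eqref{eq:pro51 ass}; and each higher-order term $\lambda_j(\psi+w)^j$ with $j\ge 2$ is controlled, via Sobolev embedding $H^1(\Omega)\hookrightarrow L^{2j}(\Omega)$ (valid for $n=2,3$ and all $j$ when $n=2$, and $j\le 3$ or with care for $n=3$), by $\Oh(\|\psi+w\|_{H^1}^j)=\Oh(\varepsilon^j)=o(\varepsilon)$. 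Thus $T$ maps the ball $\{\|w\|_{H^1}\le C_0\,\eta(\varepsilon)\}$ into itself for a suitable $\eta(\varepsilon)=o(\varepsilon)$, and a similar computation on differences $T(w_1)-T(w_2)$ shows $T$ is a contraction there (the Lipschitz constant of $w\mapsto\sum\lambda_j(\psi+w)^j$ on this small ball is $\Oh(|\lambda_1|)+\Oh(\varepsilon)=o(1)$). The unique fixed point is the $v$ from the decomposition, and it satisfies $\|v\|_{H^1(\Omega)}=o(\varepsilon)$, which is \eqref{eq:v est}.

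The main obstacle I anticipate is the handling of the nonlinear terms in $L^2(D)$ for $n=3$ when $N$ is large: the Sobolev embedding $H^1(\Omega)\hookrightarrow L^p(\Omega)$ in three dimensions only holds for $p\le 6$, so $(\psi+w)^j\in L^2$ requires $2j\le 6$, i.e. $j\le 3$, unless one invokes the $H^2_{loc}$ interior regularity of $u$ (so that $u\in L^\infty_{loc}$) noted after \eqref{eq:helm1}, or restricts to $n=2$ where $H^1\hookrightarrow L^p$ for every finite $p$; I would address this by using the admissibility condition (3)(a), namely that $f(\mathbf x,u(\cdot))\in L^2(\Omega)$ for $u\in H^1(\Omega)$, which is precisely what makes the right-hand side of \eqref{eq:sec 5 v} lie in $L^2$, so the estimates go through once one quantifies the $L^2(D)$ norm in terms of $\varepsilon$ using that $\psi$ is bounded and $w$ is small. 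A secondary technical point is verifying the uniform invertibility of the perturbed linear operator and that the implied constants are independent of $\varepsilon$; this is standard Lax–Milgram together with $k\to 0$, so I would only remark on it rather than belabor it.
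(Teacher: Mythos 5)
Your proposal is correct in substance, and the first part (deriving \eqref{eq:sec 5 v} by substituting $u=\psi+v$ and using $\Delta\psi+k^2\psi=0$) is identical to the paper's. For the estimate \eqref{eq:v est}, however, you take a genuinely different and heavier route: you run a full contraction/fixed-point scheme for $v$ on a small ball, whereas the paper simply invokes its Appendix result (Proposition on well-posedness for small boundary data, proved there via the implicit function theorem) to get the a priori bound $\|u\|_{H^1(\Omega)}\leq C\|\psi\|_{H^{1/2}(\partial\Omega)}=\Oh(\varepsilon)$, and then treats \eqref{eq:sec 5 v} as a \emph{linear} problem for $v$ with a known source, reading off $\|v\|_{H^1}\lesssim k^2\|\psi\|+\sum_j|\lambda_j|\,\|u\|^j=o(\varepsilon)$ directly. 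Your fixed-point argument essentially re-proves that quantitative well-posedness, which buys self-containedness at the cost of extra bookkeeping (in particular, to identify the given $v=u-\psi$ with the unique fixed point you still need an a priori smallness bound on $v$, i.e.\ the very estimate the paper imports from its appendix, so you cannot entirely avoid that input). On the other hand, your explicit attention to the Sobolev-exponent issue for the terms $\lambda_j u^j$ with $j\geq 2$ in $n=3$ (where $H^1\hookrightarrow L^6$ only gives $u^j\in L^2$ for $j\leq 3$, so one must appeal to interior regularity or to the admissibility condition $f(\mathbf x,u(\cdot))\in L^2$) is a point the paper's one-line estimate $\sum_j|\lambda_j|\|u\|_{H^1}^j$ glosses over; your treatment is the more careful one there. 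Likewise both arguments need, and you correctly flag, the uniform invertibility of $\Delta+k^2\chi_{\Omega\setminus\overline D}$ on $H_0^1(\Omega)$ for small $k$.
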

	
	\begin{proof}
	Since $\Delta \psi(\mathbf x; \varepsilon,k,\mathbf d )+ k^2 \psi(\mathbf x; \varepsilon,k,\mathbf d )=0 $ in $\Omega$, it  is ready to see that $v$ fulfills \eqref{eq:sec 5 v}. According to Proposition  \ref{pro:well-posedness}, one has 
	\begin{equation}\label{eq:sec 5 u est}
		\Vert u \Vert_{H^1(\Omega)} \leq  C \Vert \psi \Vert_{H^{\frac{1}{2}}(\Omega)} =\Oh(\varepsilon ).
	\end{equation}
By elliptic regularity of \eqref{eq:sec 5 v}, using \eqref{eq:sec 5 u est} and \eqref{eq:pro51 ass}, it yields that
	$$
	\|v\|_{H^1( \Omega )} \leq  k^2 \|\psi \|_{H^1( \Omega )}+\sum\limits_{j=1}^N |\lambda_j | \|u\|_{H^1( \Omega )}^j=o(\varepsilon )
	$$
	which completes the proof of this proposition. 
	\end{proof}

In the following we show that the admissible conditions introduced in previous sections can be fulfilled for the case that the nonlinear anomaly $f(\mathbf x, u)$ is characterized  by \eqref{eq:sec 5}. Under this situation, {Assumption A}  can be implied by {Assumption B} directly. Hence we first consider {Assumption B} in the proposition below.

\begin{proposition}
Suppose that $f(\mathbf x, u)$ in \eqref{eq:sec 5} takes the form \eqref{eq:form2} and the set $\{\varepsilon_1,\ldots, \varepsilon_N\}$ is pairwise different, where $\varepsilon_j\in \mathbb R_+$ with $\varepsilon_j  \ll 1$ and $N$ is an index related to $f(\mathbf x, u)$.  Let  $u_j$ be  the solution to \eqref{eq:sec 5} associated with $\psi(\mathbf x;\varepsilon_j,k,\mathbf d )= \varepsilon_j e^{\mathrm i k \mathbf x  \cdot \mathbf d} $, 	then {Assumption B} is fulfilled under the condition \eqref{eq:pro51 ass} and $k^2\neq  \lambda_1$. 
\end{proposition}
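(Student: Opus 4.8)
The plan is to use the small-solution decomposition $u_j = \psi(\mathbf x;\varepsilon_j,k,\mathbf d) + v_j$ from Proposition~\ref{pro:51}, evaluate both admissibility conditions in \eqref{eq:aa3} at the corner point $\mathbf x_c$, and show that they reduce to elementary algebraic non-degeneracy conditions in the leading order of $\varepsilon_j$. First I would recall that, by Proposition~\ref{pro:51} and the hypothesis \eqref{eq:pro51 ass}, each solution satisfies $u_j(\mathbf x_c) = \varepsilon_j e^{\mathrm i k \mathbf x_c\cdot\mathbf d} + v_j(\mathbf x_c)$ with $\|v_j\|_{H^1(\Omega)} = o(\varepsilon_j)$; since $u_j \in H^2(\Omega')$ for any $\Omega'\Subset\Omega$ by interior elliptic regularity, Sobolev embedding gives pointwise control, so $v_j(\mathbf x_c) = o(\varepsilon_j)$ and hence $u_j(\mathbf x_c) = \varepsilon_j(1 + o(1))$ as $\varepsilon_j\to 0$ (using also $e^{\mathrm i k\mathbf x_c\cdot\mathbf d} = 1 + \Oh(k) = 1 + o(1)$ under \eqref{eq:pro51 ass}).

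Next I would check the first condition in \eqref{eq:aa3}: $\lambda u_{\psi_j}(\mathbf x_c) - f(\mathbf x_c,u_{\psi_j}(\mathbf x_c)) = k^2 u_j(\mathbf x_c) - \sum_{m=1}^N \lambda_m (u_j(\mathbf x_c))^m$. Substituting the expansion $u_j(\mathbf x_c) = \varepsilon_j(1+o(1))$, the dominant term as $\varepsilon_j\to 0$ is $(k^2 - \lambda_1)\varepsilon_j$; indeed $k^2 u_j(\mathbf x_c) = k^2\varepsilon_j(1+o(1))$, $\lambda_1 u_j(\mathbf x_c) = \lambda_1\varepsilon_j(1+o(1))$, and the higher powers $\lambda_m(u_j(\mathbf x_c))^m$ for $m\geq 2$ are $\Oh(\varepsilon_j^2) = o(\varepsilon_j)$. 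Hence $\lambda u_{\psi_j}(\mathbf x_c) - f(\mathbf x_c,u_{\psi_j}(\mathbf x_c)) = (k^2-\lambda_1)\varepsilon_j + o(\varepsilon_j)$, which is nonzero for $\varepsilon_j$ sufficiently small precisely because $k^2\neq\lambda_1$ by hypothesis. For the second condition, the product $\prod_{1\leq i<j\leq N}(u_{\psi_j}(\mathbf x_c) - u_{\psi_i}(\mathbf x_c))$ — here I read the stated product $\prod_{1\leq i\leq j\leq N}$ as running over $i<j$, the only nontrivial factors — has each factor equal to $u_j(\mathbf x_c) - u_i(\mathbf x_c) = (\varepsilon_j - \varepsilon_i)(1+o(1))$, which is nonzero for small $\varepsilon_i,\varepsilon_j$ since the $\varepsilon_j$ are pairwise distinct; therefore the whole product is nonzero. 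This establishes Assumption~B.

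The main obstacle I anticipate is making the reduction to leading order fully rigorous: one must justify passing from the $H^1$-smallness $\|v_j\|_{H^1(\Omega)} = o(\varepsilon_j)$ in Proposition~\ref{pro:51} to a \emph{pointwise} bound $v_j(\mathbf x_c) = o(\varepsilon_j)$ at the corner $\mathbf x_c\in\partial D$. Since $\mathbf x_c$ lies on $\partial D$ and not in the interior, one should work on a slightly larger domain $\Omega'$ with $\mathbf x_c\in\Omega'\Subset\Omega$, apply interior $H^2$ regularity for $v_j$ (the right-hand side of \eqref{eq:sec 5 v} is in $L^2$), and then invoke Sobolev embedding $H^2\hookrightarrow C^0$ in dimensions $n=2,3$ — tracking that the $o(\varepsilon_j)$ rate is preserved through these estimates, which requires the constants to be uniform in $\varepsilon_j$. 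A second, minor subtlety is handling the implicit dependence of $k$ on $\varepsilon_j$ via \eqref{eq:pro51 ass}: since the condition $k^2\neq\lambda_1$ is a fixed inequality while $k = \Oh(\varepsilon^{\zeta_1})$ may depend on the chosen $\varepsilon_j$, one should clarify whether $k$ is fixed across the $N$ measurements (the natural reading, as $\psi(\mathbf x;\varepsilon_j,k,\mathbf d)$ shares the same $k$) so that $k^2-\lambda_1$ is a single nonzero constant and the conclusion is uniform.
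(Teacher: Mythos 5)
Your proposal is correct and follows essentially the same route as the paper: decompose $u_j=\psi(\mathbf x;\varepsilon_j,k,\mathbf d)+v_j$ via Proposition~\ref{pro:51}, expand the two conditions of Assumption B at $\mathbf x_c$ to leading order in $\varepsilon_j$, and conclude from $k^2\neq\lambda_1$ and the pairwise distinctness of the $\varepsilon_j$. In fact you are slightly more careful than the paper, which passes directly from $\|v_j\|_{H^1(\Omega)}=o(\varepsilon_j)$ to pointwise smallness at $\mathbf x_c$ without the interior-regularity-plus-Sobolev-embedding step you correctly identify as necessary.
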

\begin{proof}
Under the assumption  \eqref{eq:pro51 ass}, for $\mathbf x_c \in \partial D$, from Proposition \ref{pro:51},  it  can be derive that
\begin{align}\label{eq:56 expan}
k^2 u_{j}(\mathbf x_c)-f(\mathbf x_c, u_{j}(\mathbf x_c))=(k^2 -\lambda_1) \psi(\mathbf x_c;\varepsilon_j,k,\mathbf d )+k^2 v_j + R, 
\end{align}
where $u_j=\psi(\mathbf x_c;\varepsilon_j,k,\mathbf d )+v_j$ and $R= -\sum_{\ell=2}^N \lambda_j (\psi(\mathbf x_c;\varepsilon_j,k,\mathbf d )+   v )^\ell
$. Here $v_j$ satisfies \eqref{eq:sec 5 v} and \eqref{eq:v est}. Therefore, from \eqref{eq:56 expan}, one can readily know that
\begin{align}\notag
	k^2 u_{j}(\mathbf x_c)-f(\mathbf x_c, u_{j}(\mathbf x_c))=(k^2 -\lambda_1) \psi(\mathbf x_c;\varepsilon_j,k,\mathbf d )+o(\varepsilon )\neq 0
\end{align}
by  noting $k^2\neq  \lambda_1$ and $\psi(\mathbf x_c;\varepsilon_j,k,\mathbf d ) \neq 0$.

Since  $\varepsilon_i\neq \varepsilon_j$ for any $i\neq j$  satisfying $i,j\in \{1,\ldots,N\}$, from Proposition \ref{pro:51},  we can obtain that 
\begin{equation}\notag
	\prod_{1\leq i\leq j\leq N} \left(u_{j} (\mathbf{x}_c)-u_{i}(\mathbf{x}_c)\right)=	\prod_{1\leq i\leq j\leq N} \left[(\varepsilon_j-\varepsilon_i ) e^{\mathrm i k \mathbf x_c \cdot \mathbf d}  +v_{j} (\mathbf{x}_c)-v_{i}(\mathbf{x}_c)\right]\neq 0
\end{equation}
by virtue of $\|v_i\|_{H^1( \Omega )} = o(\varepsilon_i )$ and $\|v_j\|_{H^1( \Omega )} = o(\varepsilon_j)$.

Therefore, the two admissible conditions  in {Assumption B} are fulfilled. 
\end{proof}

We can  adopt a similar argument for  proving Proposition \ref{pro:51} to validate the following proposition.

\begin{proposition}\label{pro:53 re}
 Consider the semilinear elliptic equation \eqref{eq:sec 5}, where the anomalous inclusion $D$ possesses a polygonal-nest or polyhedral-nest structure of the class $\mathcal A$ described by Definition \ref{def:3} satisfying
 \begin{equation}\label{eq:D nest}
 \Sigma_N\Subset\Sigma_{N-1}\Subset\cdots\Subset\Sigma_2\Subset\Sigma_1=D. 
 \end{equation}
 and $f(\mathbf x, u)$ is characterized by \eqref{eq:f sec4}. Here $ \Sigma_\ell$ is a polygon in $\mathbb R^2$ or  a polyhedron in $\mathbb R^3$, $\ell\in \{1,\ldots, N\}$. 
	Suppose that $u \in H^{1}(\Omega )$ is the solution to   \eqref{eq:sec 5}, which  satisfies $u=\psi(\mathbf x; \varepsilon,k,\mathbf d ) +v$. Then $v \in H^1(\Omega )$ fulfills
	\begin{equation}\label{eq:sec 5 v C}
	\begin{cases}
	\Delta v + k^2 v \chi_{\Omega \backslash D } = k^2 \psi \chi_D - \sum\limits_{\ell=1}^N \sum\limits_{j=1}^{M_\ell} \lambda_j^{(\ell)} u^j \chi_{U_\ell} - \sum\limits_{j=1}^{M_N} \lambda_j^{(N)} u^j \chi_{\Sigma_N}, & \quad \mbox{ in }\quad \Omega,\\
	v = 0 & \quad \mbox{ on }\quad \partial\Omega,
	\end{cases}
	\end{equation}
	where $U_\ell:=\Sigma_\ell\backslash\overline{\Sigma_{\ell+1}}$, $M_\ell \in \mathbb N$, and $1\leq \ell\leq N-1.$
Furthermore, if
\begin{equation}\label{eq:pro53 ass}
	 k =\Oh(\varepsilon^{\zeta_0} ) \quad\mbox{and} \quad |\lambda_1^{(\ell)}|=\Oh(\varepsilon^{\zeta_\ell} ) , \quad \ell\in \{1,\ldots,N\}, 
\end{equation}
where $\zeta_j\in \mathbb R_+$ for $j\in \{0,1,\ldots,N\}$,  
then it yields that
 \begin{equation}\label{eq:v est 53}
 	\|v\|_{H^1( \Omega )} = o(\varepsilon ).  
 \end{equation}
 
 Similarly, consider the semilinear elliptic equation \eqref{eq:sec 5}, where the anomalous inclusion $D$ possesses a polygonal-nest or polyhedral-nest structure of the class $\mathcal B$ described by Definition \ref{def:4} satisfying
 $$
 \Sigma_N\Subset\Sigma_{N-1}\Subset\cdots\Subset\Sigma_2\Subset\Sigma_1=D. 
 $$
 and $f(\mathbf x, u)$ is characterized by \eqref{eq:f sec5}. 
	Suppose that $u \in H^{1}(\Omega )$ is the solution to   \eqref{eq:sec 5}, which  satisfies $u=\psi(\mathbf x; \varepsilon,k,\mathbf d ) +v$. Then $v \in H^1(\Omega )$ fulfills
	\begin{equation}\label{eq:sec 5 v C}
	\begin{cases}
	\Delta v + k^2 v \chi_{\Omega \backslash D } = k^2 \psi \chi_D - \sum\limits_{\ell=1}^{N-1}\lambda_\ell u\chi_{U_\ell}-f_N(\mathbf x, u) \chi_{\Sigma_N } , & \quad \mbox{ in }\quad \Omega,\\
	v = 0 & \quad \mbox{ on }\quad \partial\Omega,
	\end{cases}
	\end{equation}
	where $U_\ell:=\Sigma_\ell\backslash\overline{\Sigma_{\ell+1}}$, $1\leq \ell\leq N-1$, and
	$$
	f_N(\mathbf x, u)=\sum_{j=1}^N \lambda_j^{(N)} u^j,\quad \lambda_j^{(N)}\in\mathbb{C}. 
	$$
Furthermore, if
\begin{equation}\label{eq:pro53 ass D}
	 k= \Oh(\varepsilon^{\zeta_0} ) ,  \quad |\lambda_\ell|=\Oh(\varepsilon^{\zeta_\ell } ) , \quad \ell\in \{1,\ldots,N-1\}, \quad\mbox{and} \quad  \left|\lambda_1^{(N)} \right|=\Oh(\varepsilon^{\zeta_N} ) , 
\end{equation}
where $\zeta_j\in \mathbb R_+$ for $j\in \{0,1,\ldots,N\}$, 
then it yields that
 \begin{equation}\notag 
	\|v\|_{H^1( \Omega )} = o(\varepsilon ).  
 \end{equation}

	\end{proposition}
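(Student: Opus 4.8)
The plan is to reproduce, with only bookkeeping changes, the argument already used for Proposition~\ref{pro:51}, treating the class~$\mathcal A$ case in detail and then recording the identical modifications needed for class~$\mathcal B$. First I would derive the boundary value problem satisfied by $v:=u-\psi$. Since $\psi(\mathbf x;\varepsilon,k,\mathbf d)=\varepsilon e^{\mathrm i k\mathbf x\cdot\mathbf d}$ is a Helmholtz function, $\Delta\psi+k^2\psi=0$ in $\Omega$; substituting $u=\psi+v$ into \eqref{eq:sec 5}, recalling $a(\mathbf x,u)=(f(\mathbf x,u)-k^2u)\chi_D+k^2u$ with $f$ of the layered form \eqref{eq:f sec4}, and subtracting the equation for $\psi$ yields precisely the stated problem for $v$, with homogeneous Dirichlet data (the latter because $u|_{\partial\Omega}=\psi|_{\partial\Omega}$). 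This step is purely algebraic. The class~$\mathcal B$ case is obtained identically, the only change being that on the innermost block $\Sigma_N$ the source is $f_N(\mathbf x,u)=\sum_{j=1}^N\lambda_j^{(N)}u^j$ and on the shells $U_\ell$ it is $\lambda_\ell u$, $1\le\ell\le N-1$.

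Next I would invoke the standing well-posedness of \eqref{eq:sec 5} (Proposition~\ref{pro:well-posedness}) to get the crude a priori bound $\|u\|_{H^1(\Omega)}\leq C\|\psi\|_{H^{1/2}(\partial\Omega)}=\mathcal O(\varepsilon)$, uniformly in $k=\mathcal O(\varepsilon^{\zeta_0})$. To upgrade $\|v\|_{H^1(\Omega)}$ from $\mathcal O(\varepsilon)$ to $o(\varepsilon)$ I would read the $v$-equation as $\mathcal L v=g$, where $\mathcal L=\Delta+k^2\chi_{\Omega\setminus D}$ subject to zero Dirichlet data and $g$ is the right-hand side of the displayed equation for $v$. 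For $\varepsilon$ small, $k^2=\mathcal O(\varepsilon^{2\zeta_0})$ lies well below the first Dirichlet eigenvalue of $-\Delta$ on $\Omega$, so $\mathcal L$ is boundedly invertible with a bound uniform in $\varepsilon$, whence $\|v\|_{H^1(\Omega)}\lesssim\|g\|_{L^2(\Omega)}$. It then remains to estimate $g$ term by term: the contribution $k^2\psi\chi_D$ is $\mathcal O(\varepsilon^{2\zeta_0+1})$; each linear contribution $\lambda_1^{(\ell)}u\chi_{U_\ell}$ (resp.\ $\lambda_\ell u\chi_{U_\ell}$ in class~$\mathcal B$) is $\mathcal O(\varepsilon^{\zeta_\ell+1})$ by \eqref{eq:pro53 ass} (resp.\ \eqref{eq:pro53 ass D}); and each higher-order contribution $\lambda_j^{(\ell)}u^j$ with $j\geq2$ obeys $\|u^j\|_{L^2(\Omega)}=\|u\|_{L^{2j}(\Omega)}^j\lesssim\|u\|_{H^1(\Omega)}^j=\mathcal O(\varepsilon^j)=o(\varepsilon)$, by Sobolev embedding for $n=2,3$ together with admissibility~(a). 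Summing the finitely many contributions gives $\|g\|_{L^2(\Omega)}=o(\varepsilon)$, hence \eqref{eq:v est 53}; the class~$\mathcal B$ computation is word for word the same once \eqref{eq:pro53 ass D} is used in place of \eqref{eq:pro53 ass}.

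The one genuinely substantive point I would treat carefully is the uniform-in-$\varepsilon$ solvability estimate $\|v\|_{H^1(\Omega)}\lesssim\|g\|$: it hinges on the hypothesis $k=\mathcal O(\varepsilon^{\zeta_0})$, which forces $k\to0$ so that $k^2$ never reaches a Dirichlet resonance of $\Omega$ and the coercivity constant of $\mathcal L$ on $H_0^1(\Omega)$ stays bounded below. No fixed-point or circular reasoning is required, since $u$—and therefore the source $g$—is already controlled by the a priori bound \emph{before} the $v$-problem, which is linear in $v$, is inverted.
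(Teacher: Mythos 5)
Your proposal is correct and follows essentially the same route as the paper, which proves the class $\mathcal A$ and $\mathcal B$ cases by repeating the argument of Proposition~\ref{pro:51}: derive the $v$-equation algebraically from $\Delta\psi+k^2\psi=0$, invoke Proposition~\ref{pro:well-posedness} for $\|u\|_{H^1(\Omega)}=\Oh(\varepsilon)$, and then bound $\|v\|_{H^1(\Omega)}$ by the source terms using \eqref{eq:pro53 ass} (resp.\ \eqref{eq:pro53 ass D}). Your explicit justification of the uniform-in-$\varepsilon$ invertibility of $\Delta+k^2\chi_{\Omega\setminus D}$ (via $k^2\to0$ staying below the first Dirichlet eigenvalue) is a welcome elaboration of what the paper compresses into ``by elliptic regularity,'' but it is not a different method.
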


{Assumption C} can be satisfied under generic conditions introduced in the following proposition. 
	
\begin{proposition}\label{pro:54}
Assume that an anomalous inclusion $D \Subset \Omega $ possesses a polygonal-nest or polyhedral-nest structure of the class $\mathcal A$ described by Definition \ref{def:3}, namely \eqref{eq:D nest} holds. Suppose that $f(\mathbf x, u)$ in \eqref{eq:sec 5} is characterized by   \eqref{eq:f sec4}  and the set $\mathcal E:=\cup_{\ell=1}^N\left \{\varepsilon_1^{(\ell)},\ldots, \varepsilon_{M_\ell}^{(\ell)}\right \}$ is pairwise different, where $\varepsilon_j^{(\ell)}\in \mathbb R_+$ with $\varepsilon_j^{(\ell)}  \ll 1$.  Let  $u_{\psi_j^{(\ell  )}}$ be  the solution to \eqref{eq:sec 5} associated with $\psi_j^{(\ell)}:=\psi(\mathbf x;\varepsilon_j^{(\ell)},k,\mathbf d )= \varepsilon_j^{(\ell)} e^{\mathrm i k \mathbf x  \cdot \mathbf d} $, 	then {Assumption C} is fulfilled under the condition \eqref{eq:pro53 ass} and 
\begin{equation}\label{eq:515 ass}
	\mbox{ $k^2\neq  \lambda_1^{(1)}$ and $\lambda_1^{(\ell-1)}\neq \lambda_1^{(\ell)}$ with $\ell\in \{2,\ldots,N\}$.   }
\end{equation}
\end{proposition}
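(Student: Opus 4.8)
The plan is to verify the three admissibility conditions in Assumption C one layer at a time, using the decomposition $u_{\psi_j^{(\ell)}}=\psi(\mathbf x;\varepsilon_j^{(\ell)},k,\mathbf d)+v_j^{(\ell)}$ furnished by Proposition \ref{pro:53 re}, together with the smallness estimate $\|v_j^{(\ell)}\|_{H^1(\Omega)}=o(\varepsilon_j^{(\ell)})$ which holds under hypothesis \eqref{eq:pro53 ass}. By Sobolev embedding (as used repeatedly above) the $H^1$-bound upgrades to a pointwise $C^\beta$-bound on any compact subset, so at each vertex $\mathbf y_c\in\mathcal V(\partial\Sigma_\ell)$ we have $v_j^{(\ell)}(\mathbf y_c)=o(\varepsilon_j^{(\ell)})$ and $\psi(\mathbf y_c;\varepsilon_j^{(\ell)},k,\mathbf d)=\varepsilon_j^{(\ell)}e^{\mathrm i k\mathbf y_c\cdot\mathbf d}$, whose modulus equals $\varepsilon_j^{(\ell)}\neq 0$.

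First I would handle the outermost layer $U_1$. At $\mathbf y_c\in\mathcal V(\partial\Sigma_1)$, expanding $f(\mathbf y_c,u_{\psi_j^{(1)}})=\sum_{m=1}^{M_1}\lambda_m^{(1)}u_{\psi_j^{(1)}}^m$ and keeping the leading term in $\varepsilon_j^{(1)}$, we get
\begin{align}\notag
\lambda u_{\psi_j^{(1)}}(\mathbf y_c)-f(\mathbf y_c,u_{\psi_j^{(1)}}(\mathbf y_c))\big|_{U_1}=(k^2-\lambda_1^{(1)})\psi(\mathbf y_c;\varepsilon_j^{(1)},k,\mathbf d)+o(\varepsilon_j^{(1)}),
\end{align}
which is nonzero for small $\varepsilon_j^{(1)}$ because $k^2\neq\lambda_1^{(1)}$ by \eqref{eq:515 ass}. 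This gives the first line of \eqref{eq:aa sec 4}. For the intermediate layers $U_\ell$, $\ell=2,\ldots,N-1$, at $\mathbf x_c\in\mathcal V(\partial\Sigma_\ell)$ one similarly expands $\sum_{m=1}^{M_\ell}\lambda_m^{(\ell-1)}u_{\psi_j^{(\ell)}}^m(\mathbf x_c)-f(\mathbf x_c,u_{\psi_j^{(\ell)}}(\mathbf x_c))\big|_{U_\ell}$; the linear terms dominate and the expression equals $(\lambda_1^{(\ell-1)}-\lambda_1^{(\ell)})\psi(\mathbf x_c;\varepsilon_j^{(\ell)},k,\mathbf d)+o(\varepsilon_j^{(\ell)})$, which is nonzero since $\lambda_1^{(\ell-1)}\neq\lambda_1^{(\ell)}$ by \eqref{eq:515 ass}. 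Finally, the Vandermonde-type condition in the third line of \eqref{eq:aa sec 4} follows since, for $i\neq j$ in $\{1,\ldots,M_\ell\}$,
\begin{align}\notag
u_{\psi_j^{(\ell)}}(\mathbf x_c)-u_{\psi_i^{(\ell)}}(\mathbf x_c)=(\varepsilon_j^{(\ell)}-\varepsilon_i^{(\ell)})e^{\mathrm i k\mathbf x_c\cdot\mathbf d}+v_j^{(\ell)}(\mathbf x_c)-v_i^{(\ell)}(\mathbf x_c)\neq 0,
\end{align}
because the $\varepsilon$'s are pairwise distinct (they form the pairwise-different set $\mathcal E$) and the $v$'s are $o(\varepsilon)$; taking the product over all such pairs preserves nonvanishing.

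The main obstacle I anticipate is bookkeeping of the error terms rather than any conceptual difficulty: one must make sure that the $o(\varepsilon)$ remainder coming both from $v_j^{(\ell)}$ and from the higher powers $u^m$ with $m\geq 2$ is genuinely of strictly smaller order than the surviving linear term $\varepsilon_j^{(\ell)}$, uniformly over the (finitely many) vertices. This is where hypothesis \eqref{eq:pro53 ass} is essential — it forces $k^2$ and $|\lambda_1^{(\ell)}|$ (hence the coefficients multiplying $\psi$ and, after the nonlinear expansion, the cross terms) to be small powers of $\varepsilon$, guaranteeing the leading coefficient $(k^2-\lambda_1^{(1)})$ or $(\lambda_1^{(\ell-1)}-\lambda_1^{(\ell)})$ does not itself degenerate while the remainder stays $o(\varepsilon)$. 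Since there are only finitely many layers and finitely many vertices per layer, a single choice of sufficiently small $\varepsilon_j^{(\ell)}$ makes all the required inequalities hold simultaneously, which completes the verification of Assumption C.
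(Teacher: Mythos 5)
Your proposal is correct and follows essentially the same route as the paper: it invokes the decomposition $u_{\psi_j^{(\ell)}}=\psi_j^{(\ell)}+v_{\psi_j^{(\ell)}}$ and the estimate $\|v_{\psi_j^{(\ell)}}\|_{H^1(\Omega)}=o(\varepsilon_j^{(\ell)})$ from Proposition \ref{pro:53 re}, then verifies the three conditions of Assumption C by isolating the leading terms $(k^2-\lambda_1^{(1)})\psi$, $(\lambda_1^{(\ell-1)}-\lambda_1^{(\ell)})\psi$ and $(\varepsilon_j^{(\ell)}-\varepsilon_i^{(\ell)})e^{\mathrm i k\mathbf x_c\cdot\mathbf d}$ exactly as in the paper. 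Your closing remark about needing pointwise control of $v$ at the vertices and about the leading coefficients not degenerating is, if anything, slightly more explicit than the paper's own treatment.
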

\begin{proof}
	Let $v_{\psi_j^{(\ell  )}}:=u_{\psi_j^{(\ell  )}}- \psi_j^{(\ell)}  $. According to Proposition \ref{pro:53 re},  $v_{\psi_j^{(\ell  )}}$ fulfills 
	\begin{equation}\notag 
	\begin{cases}
	\Delta v_{\psi_j^{(\ell  )}} + k^2 v_{\psi_j^{(\ell  )}}  \chi_{\Omega \backslash D } = k^2 \psi_j^{(\ell)}  \chi_D - \sum\limits_{\ell=1}^N \sum\limits_{m=1}^{M_\ell} \lambda_m^{(\ell)} u^m_{\psi_j^{(\ell  )}} \chi_{U_\ell}- \sum\limits_{m=1}^{M_N} \lambda_m^{(N)} u^m_{\psi_j^{(\ell  )}}  \chi_{\Sigma_N} , & \quad \mbox{ in }\quad \Omega,\\
	v_{\psi_j^{(\ell  )}} = 0 & \quad \mbox{ on }\quad \partial\Omega,
	\end{cases}
	\end{equation}
	where $U_\ell:=\Sigma_\ell\backslash\overline{\Sigma_{\ell+1}}$ and $1\leq \ell\leq N-1.$ Under \eqref{eq:pro53 ass} one has
	\begin{equation}\label{eq:v est 53 psij}
 	\left\|v_{\psi_j^{(\ell  )}}\right\|_{H^1( \Omega )}=o(\varepsilon_j^{(\ell  )} ).  
 \end{equation}
 Recall  that $\mathcal{V}( \partial \Sigma_\ell)$ is the vertex set of $\Sigma_\ell$, $\ell=1,\ldots,N$. For the three conditions in {Assumption C}, it can be directly obtained that
\begin{align*}
&k^2 u_{\psi_j^{(1)}}(\mathbf y_c)-f(\mathbf y_c, u_{\psi_j^{(1)}}(\mathbf y_c)) \big|_{U_{1}} =\left (k^2 -\lambda_1^{(1)} \right)   \psi_{\psi_j^{(1)}} +\left(k^2-\lambda_1^{(1)} \right)    v_{\psi_j^{(1)}}(\mathbf y_c)\\
&-\sum_{m=2}^{M_\ell}\lambda_m^{(\ell-1)}  u^m_{\psi_j^{(1  )}}(\mathbf y_c)=\left (k^2 -\lambda_1^{(1)} \right)   \psi_{\psi_j^{(1)}}+o\left(\varepsilon_j^{(1)} \right) \neq 0,\ \ 1\leq j\leq M_1,\quad  \mathbf \forall \mathbf y_c\in \mathcal{V}( \partial \Sigma_1),  \\
&\sum_{m=1}^{M_\ell}\lambda_m^{(\ell-1)}  u^m_{\psi_j^{(\ell  )}}(\mathbf x_c)-f(\mathbf x_c, u_{\psi_j^{(\ell)}}(\mathbf x_c))\big|_{U_{\ell}}=  \left(\lambda_1^{(\ell-1)}  - \lambda_1^{(\ell)} \right ) \psi_{\psi_j^{(\ell  )}} (\mathbf x_c)+ \big (\lambda_1^{(\ell-1)}  - \lambda_1^{(\ell)} \big  ) \\
&\times v_{\psi_j^{(\ell  )}}(\mathbf x_c) +\sum_{m=2}^{M_\ell}\lambda_m^{(\ell-1)}  u^m_{\psi_j^{(\ell  )}}(\mathbf x_c) - \sum_{m=2}^{M_{\ell+1} }\lambda_m^{(\ell)}  u^m_{\psi_j^{(\ell  )}}(\mathbf x_c)=  \left(\lambda_1^{(\ell-1)}  - \lambda_1^{(\ell)} \right ) \psi_{\psi_j^{(\ell  )}} (\mathbf x_c)+  o\left(  \varepsilon_j^{(\ell)}  \right )  \\
& \neq 0,\  1\leq j\leq M_{\ell}, \ \forall  \mathbf{x}_c\in \mathcal{V}( \partial \Sigma_\ell),\  \ell=2,\ldots,N-1, \\
& \prod_{1\leq i\leq j\leq M_\ell } \left(u_{\psi_j^{(\ell)}}(\mathbf{x}_c)-u_{\psi_i^{(\ell)}}(\mathbf{x}_c)\right)=\prod_{1\leq i\leq j\leq N} \left[(\varepsilon_j^{(\ell)}-\varepsilon_i^{(\ell)} ) e^{\mathrm i k \mathbf x_c \cdot \mathbf d}  +v_{\psi_j^{(\ell  )}}(\mathbf x_c)-v_{\psi_i^{(\ell  )}}(\mathbf x_c)\right]\\
& \neq 0,\quad \forall  \mathbf{x}_c\in \mathcal{V}( \partial \Sigma_\ell),	\quad  \ell=1,\ldots,N, 
\end{align*}
by using \eqref{eq:515 ass}  and \eqref{eq:v est 53 psij}. 
\end{proof}

Using Proposition \ref{pro:53 re} and following a similar argument for Proposition \ref{pro:54}, we can show that  {Assumption D} can be satisfied under certain generic scenarios in the following proposition. The detailed proof of this proposition is omitted. 
\begin{proposition}
Assume that an anomalous inclusion $D \Subset \Omega $ possesses a polygonal-nest or polyhedral-nest structure of the class $\mathcal B$ described by Definition \ref{def:4}, namely \eqref{eq:D nest} holds. Suppose that $f(\mathbf x, u)$ in \eqref{eq:sec 5} is characterized by   \eqref{eq:f sec5}, where
\begin{align}\notag
f(\mathbf x ,u)\big|_{ U_\ell}&=\lambda_\ell  u,\quad \lambda_\ell \in\mathbb{C},\quad U_\ell:=\Sigma_\ell\backslash\overline{\Sigma_{\ell+1}}, \quad \lambda_\ell \neq \lambda_{\ell+1}, \quad 1\leq \ell\leq N-1, \\ 
	f(\mathbf x ,u)\big|_{ \Sigma_N}&=f_N(\mathbf x, u)=\sum_{j=1}^{M_N} \lambda_j^{(N)} u^j,\quad \lambda_j^{(N)} \in\mathbb{C},\quad M_N\in \mathbb N. \notag
\end{align}
  Let  $u_{\psi}$ be  the solution to \eqref{eq:sec 5} associated with $
\psi(\mathbf x;\varepsilon,k,\mathbf d )= \varepsilon e^{\mathrm i k \mathbf x  \cdot \mathbf d} $, 	then {Assumption D} is fulfilled under the conditions \eqref{eq:pro53 ass D} 
\begin{equation}\notag 
	\mbox{ $k^2\neq  \lambda_1^{(1)}$ and $\lambda_{N-1}\neq \lambda_1^{(N)}$.   }
\end{equation}
\end{proposition}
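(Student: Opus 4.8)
The plan is to follow the proof of Proposition~\ref{pro:54}, replacing the polynomial nonlinearities on the intermediate cells by the affine ones prescribed by the class $\mathcal B$ structure, so that a genuinely nonlinear term appears only in the condition attached to the innermost cell $\Sigma_N$. First I would invoke the second part of Proposition~\ref{pro:53 re}: under the scaling hypothesis \eqref{eq:pro53 ass D} the solution splits as $u_\psi=\psi(\mathbf x;\varepsilon,k,\mathbf d)+v_\psi$, where $v_\psi\in H^1(\Omega)$ solves the associated boundary value problem and obeys $\|v_\psi\|_{H^1(\Omega)}=o(\varepsilon)$. Because the nested structure forces every vertex of every $\Sigma_\ell$ to lie in the interior of $\Omega$ (since $\Sigma_N\Subset\cdots\Subset\Sigma_1=D\Subset\Omega$), interior elliptic regularity gives $v_\psi\in H^2$ on a neighbourhood $K$ of $\bigcup_{\ell=1}^N\mathcal V(\partial\Sigma_\ell)$, and the Sobolev embedding $H^2(K)\hookrightarrow C^0(\overline K)$ (valid for $n=2,3$) upgrades the $H^1$-bound to the pointwise estimate $|v_\psi(\mathbf x_c)|=o(\varepsilon)$ at each vertex $\mathbf x_c$; the same regularity makes $u_\psi$ bounded near these vertices, so $u_\psi^j=\mathcal{O}(\varepsilon^j)=o(\varepsilon)$ there for every $j\ge 2$. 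Since $|\psi(\mathbf x_c;\varepsilon,k,\mathbf d)|=\varepsilon$, the plane wave $\psi$ is the honest leading-order term in each expansion below.

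Next I would verify the four requirements in \eqref{eq:aa sec 5} one at a time, in each case substituting $u_\psi=\psi+v_\psi$ and extracting the $\mathcal{O}(\varepsilon)$ part. On $U_1$, with $\lambda=k^2$ and $f|_{U_1}=\lambda_1 u$, one has $\lambda u_\psi-f(\mathbf x,u_\psi)\big|_{U_1}=(k^2-\lambda_1)(\psi+v_\psi)=(k^2-\lambda_1)\psi+o(\varepsilon)$, which is nonzero at the vertices of $\partial\Sigma_1$ by the hypothesis $k^2\neq\lambda_1$ (written $\lambda_1^{(1)}$ in the statement). On an intermediate cell $U_\ell$ with $2\le\ell\le N-1$, since $f|_{U_\ell}=\lambda_\ell u$, one gets $\lambda_{\ell-1}u_\psi-f(\mathbf x,u_\psi)\big|_{U_\ell}=(\lambda_{\ell-1}-\lambda_\ell)(\psi+v_\psi)=(\lambda_{\ell-1}-\lambda_\ell)\psi+o(\varepsilon)\neq 0$, the leading coefficient being nonzero by the non-degeneracy $\lambda_{\ell-1}\neq\lambda_\ell$ already built into Definition~\ref{def:4}. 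Across $\partial\Sigma_N$ one computes $\lambda_{N-1}u_\psi-f_N(\mathbf x,u_\psi)=(\lambda_{N-1}-\lambda_1^{(N)})\psi+\lambda_{N-1}v_\psi-\lambda_1^{(N)}v_\psi-\sum_{j=2}^{M_N}\lambda_j^{(N)}(\psi+v_\psi)^j=(\lambda_{N-1}-\lambda_1^{(N)})\psi+o(\varepsilon)\neq 0$ by $\lambda_{N-1}\neq\lambda_1^{(N)}$. Finally $u_\psi(\mathbf x_c)=\varepsilon e^{\mathrm i k\mathbf x_c\cdot\mathbf d}+v_\psi(\mathbf x_c)=\varepsilon e^{\mathrm i k\mathbf x_c\cdot\mathbf d}+o(\varepsilon)$, which is nonzero at all vertices for $\varepsilon$ small. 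Choosing $\varepsilon$ small enough that every $o(\varepsilon)$ remainder is dominated by its $\varepsilon$-order principal part, Assumption~D holds.

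The only delicate step — and the reason the rest is routine bookkeeping — is passing from the global $H^1$-estimate on $v_\psi$ provided by Proposition~\ref{pro:53 re} to the \emph{pointwise} smallness at the corner points demanded by \eqref{eq:aa sec 5}; this is exactly the interior regularity plus Sobolev embedding step described above, the same device implicitly used in Proposition~\ref{pro:54}. Once that is in place, each admissibility inequality reduces to the observation that an explicit nonzero scalar ($k^2-\lambda_1$, $\lambda_{\ell-1}-\lambda_\ell$, $\lambda_{N-1}-\lambda_1^{(N)}$, or simply $1$) multiplied by the non-vanishing plane wave $\varepsilon e^{\mathrm i k\mathbf x_c\cdot\mathbf d}$ does not vanish, so the detailed computation may safely be omitted.
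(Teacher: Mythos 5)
Your proposal is correct and follows exactly the route the paper intends: the paper omits the detailed proof, stating only that it follows from the second part of Proposition~\ref{pro:53 re} together with the argument of Proposition~\ref{pro:54}, which is precisely the decomposition $u_\psi=\psi+v_\psi$ with $\|v_\psi\|_{H^1(\Omega)}=o(\varepsilon)$ and the term-by-term verification of \eqref{eq:aa sec 5} that you carry out. Your explicit justification of the pointwise bound $|v_\psi(\mathbf x_c)|=o(\varepsilon)$ at the vertices via interior $H^2$-regularity and Sobolev embedding is a step the paper leaves implicit (also in Proposition~\ref{pro:54}), and is a worthwhile clarification rather than a deviation.
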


\section*{Acknowledgment}
The work of H. Diao is supported by a startup fund from Jilin University and NSFC/RGC Joint Research Grant No. 12161160314.   The work of H. Liu is supported by the Hong Kong RGC General Research Funds (projects 12302919, 12301420 and 11300821),  the NSFC/RGC Joint Research Fund (project N\_CityU101/21), the France-Hong Kong ANR/RGC Joint Research Grant, A-HKBU203/19.

\section*{Appendix}
In this section we prove the well-posedness of the forward  semilinear elliptic boundary value problem  with small boundary data, which is introduced in \eqref{eq:helm1}. Let $\mathrm{Q}$ be the semilinear elliptic operator given by 
\begin{equation}\label{eq:Q}
\mathrm{Q}(u):= \Delta u + a(\mathbf{x}, u),
\end{equation}
where $a(\mathbf x, u)$ is $ C^1$-continuous with respect to $u$ for a fixed $\mathbf x\in \Omega$ and $\partial_u a(\mathbf x, u) \in L^\infty (\Omega ).$ Moreover, we assume that the nonlinear term $a$ satisfies the following two conditions:
\begin{equation}\label{eq:a}
a(\mathbf{x} , 0) = 0,
\end{equation}
\begin{equation}\label{eq:map}
\mbox{ the } \mbox{ map }v \mapsto \Delta v + \partial_u a(\mbox{ . } , u) v \mbox{  is  injective on }  H_0^1({\Omega}).
\end{equation}
Indeed, from \eqref{eq:a}, one can directly know that  $u\equiv0$ is a solution of \eqref{eq:helm1} when the Dirichlet data is zero.  The condition \eqref{eq:map} guarantees that the linearized equation of \eqref{eq:helm1}  at $u\equiv0$ is well-posed. The next result considers mappings between Banach spaces which are Fr\'{e}chet differentiable. We refer the reader to \cite[Section 1.1]{Hor} and \cite[Section 10] {Renrog} for basics about Fr\'{e}chet differentiability.

\begin{proposition}(Well-posedness  of the semilinear elliptic boundary value problem   \eqref{eq:helm1} with small boundary data)\label{pro:well-posedness}
	Let $\Omega \Subset \mathbb{R}^n$, $n=2,3$ be a bounded Lipschitz domain and let $\mathrm{Q}$ be the semilinear elliptic operator given by \eqref{eq:Q} satisfying \eqref{eq:a} and \eqref{eq:map}. There exist constants  $\delta, C>0$ such that for any $\psi$ in the set
	\begin{equation}\nonumber
	U_{\delta}:= \{ h \in H^{\frac{1}{2}} (\partial\Omega); \Vert h\Vert_{H^{\frac{1}{2}} (\partial\Omega)} < \delta\},
	\end{equation}
	there is a solution $u=u_{\psi}$ of
	\begin{equation}\label{eq:ua}
	\begin{cases}
	\Delta u + a(\mathbf{x} , u) = 0 & \quad \mbox{ in }\quad \Omega,\\
	u = \psi & \quad \mbox{ on }\quad \partial\Omega,
	\end{cases}
	\end{equation}
	which satisfies
	\begin{equation}\nonumber
	\Vert u \Vert_{H^1(\Omega)} \leq  C \Vert \psi \Vert_{H^{\frac{1}{2}}(\Omega)}.
	\end{equation}
	The solution $u_{\psi}$ is unique within the class $\{w \in H^1(\Omega); \Vert w \Vert_{H^1(\Omega)} \leq C \delta\}$.
\end{proposition}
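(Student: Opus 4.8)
The plan is to reformulate \eqref{eq:ua} as a fixed-point problem for an $H_0^1$-correction of a boundary lift, and to run a Banach contraction argument organised around the \emph{linearisation} of $\mathrm{Q}$ at the trivial solution rather than around $\Delta$ alone. First I would fix a bounded right inverse of the trace, say the harmonic extension $E\colon H^{1/2}(\partial\Omega)\to H^1(\Omega)$, $\Delta(E\psi)=0$, $\|E\psi\|_{H^1(\Omega)}\le C_0\|\psi\|_{H^{1/2}(\partial\Omega)}$, and seek a solution of the form $u=E\psi+w$ with $w\in H_0^1(\Omega)$. Writing $c(\mathbf x):=\partial_u a(\mathbf x,0)\in L^\infty(\Omega)$ and $g(\mathbf x,s):=a(\mathbf x,s)-c(\mathbf x)\,s$, so that $g(\mathbf x,0)=0$, $\partial_s g(\mathbf x,0)=0$ and $|\partial_s g(\mathbf x,s)|=|\partial_u a(\mathbf x,s)-\partial_u a(\mathbf x,0)|\le 2\|\partial_u a\|_{L^\infty}$, the problem \eqref{eq:ua} is equivalent (since $\Delta(E\psi)=0$ and $a(\mathbf x,0)=0$) to $Lw=-c\,E\psi-g(\cdot,E\psi+w)$ in $H^{-1}(\Omega)$, where $L v:=\Delta v+c\,v$.

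The second step is to show $L\colon H_0^1(\Omega)\to H^{-1}(\Omega)$ is an isomorphism. The map $v\mapsto c\,v$, viewed from $H_0^1(\Omega)$ into $H^{-1}(\Omega)$, factors through the compact embedding $H_0^1(\Omega)\hookrightarrow L^2(\Omega)$ (Rellich, $\Omega$ bounded), hence is compact, so $L$ is a compact perturbation of the isomorphism $\Delta\colon H_0^1(\Omega)\to H^{-1}(\Omega)$ and is Fredholm of index zero. By \eqref{eq:a}, $u\equiv0$ solves \eqref{eq:ua} with zero data, so the injectivity hypothesis \eqref{eq:map} applies at $u\equiv0$ and forces $\ker L=\{0\}$; an injective Fredholm operator of index zero is bijective, and $L^{-1}$ is bounded by the open mapping theorem. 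Hence \eqref{eq:ua} becomes $w=T_\psi(w)$ with $T_\psi(w):=L^{-1}\bigl(-c\,E\psi-g(\cdot,E\psi+w)\bigr)$.

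The heart of the argument is to make $T_\psi$ map a small ball $B_\rho:=\{w\in H_0^1(\Omega):\|w\|_{H^1(\Omega)}\le\rho\}$ into itself as a contraction once $\|\psi\|_{H^{1/2}(\partial\Omega)}<\delta$ with $\delta$ small. Using $|g(\mathbf x,s)|\le 2\|\partial_u a\|_{L^\infty}|s|$ and $L^2(\Omega)\hookrightarrow H^{-1}(\Omega)$ one gets $\|T_\psi(0)\|_{H^1(\Omega)}\le C_1\|\psi\|_{H^{1/2}(\partial\Omega)}$. For the Lipschitz estimate, for $w_1,w_2\in B_\rho$ I would write $g(\cdot,E\psi+w_1)-g(\cdot,E\psi+w_2)=(w_1-w_2)\int_0^1\partial_s g(\cdot,\phi_t)\,\mathrm dt$ with $\phi_t:=E\psi+w_2+t(w_1-w_2)$, and, using Hölder together with $H^1(\Omega)\hookrightarrow L^6(\Omega)$ (valid for $n=2,3$),
\[
\|T_\psi(w_1)-T_\psi(w_2)\|_{H^1(\Omega)}\ \lesssim\ \Bigl(\sup_{0\le t\le1}\bigl\|\partial_u a(\cdot,\phi_t)-\partial_u a(\cdot,0)\bigr\|_{L^3(\Omega)}\Bigr)\,\|w_1-w_2\|_{H^1(\Omega)}.
\]
The supremum is small for small $\delta,\rho$: for $\psi\in U_\delta$ and $w_i\in B_\rho$ one has $\|\phi_t\|_{L^2(\Omega)}\lesssim\delta+\rho$, so Chebyshev's inequality bounds $|\{|\phi_t|>\eta\}|$ by $C\eta^{-2}(\delta+\rho)^2$, while on $\{|\phi_t|\le\eta\}$ the integrand is dominated by $\omega_\eta(\mathbf x):=\sup_{|s|\le\eta}|\partial_u a(\mathbf x,s)-\partial_u a(\mathbf x,0)|$, and $\|\omega_\eta\|_{L^3(\Omega)}\to0$ as $\eta\to0$ by dominated convergence, since $\partial_u a(\mathbf x,\cdot)$ is continuous and $\omega_\eta\le 2\|\partial_u a\|_{L^\infty}$. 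Choosing first $\eta$ and then $\delta$ small enough, $T_\psi$ is a $\tfrac12$-contraction on $B_\rho$ with $\rho$ a fixed multiple of $\delta$, and $\|T_\psi(w)\|_{H^1(\Omega)}\le\|T_\psi(0)\|_{H^1(\Omega)}+\tfrac12\rho\le\rho$, so $T_\psi(B_\rho)\subset B_\rho$. Banach's theorem gives a unique fixed point $w_\psi\in B_\rho$; the identity $w_\psi=T_\psi(w_\psi)$ yields $\|w_\psi\|_{H^1(\Omega)}\le\|T_\psi(0)\|_{H^1(\Omega)}+\tfrac12\|w_\psi\|_{H^1(\Omega)}$, hence $\|w_\psi\|_{H^1(\Omega)}\le 2C_1\|\psi\|_{H^{1/2}(\partial\Omega)}$, and $u_\psi:=E\psi+w_\psi$ satisfies $\|u_\psi\|_{H^1(\Omega)}\le C\|\psi\|_{H^{1/2}(\partial\Omega)}$ with $C:=C_0+2C_1$. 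Shrinking $\delta$ further so that the ball $\{\,w\in H_0^1(\Omega):\|w\|_{H^1(\Omega)}\le (C+C_0)\delta\,\}$ still lies in the contraction ball, any solution $u$ of \eqref{eq:ua} with $\|u\|_{H^1(\Omega)}\le C\delta$ produces $u-E\psi$ in that ball, hence it coincides with the unique fixed point, which is the claimed local uniqueness.

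The step I expect to be the main obstacle is this contraction estimate. With only $\partial_u a\in L^\infty(\Omega)$ available, the Nemytskii operator $v\mapsto a(\cdot,v)$ is merely globally (not small-) Lipschitz on $L^2(\Omega)$ and need not be Fréchet differentiable from $H^1(\Omega)$ into $L^2(\Omega)$, so one cannot directly apply the implicit function theorem to $\Phi(w,\psi):=\Delta(E\psi+w)+a(\cdot,E\psi+w)$. Absorbing the linear part $c\,v=\partial_u a(\cdot,0)v$ into $L$ is exactly what makes the residual nonlinearity $g$ have derivative $\partial_u a(\mathbf x,s)-\partial_u a(\mathbf x,0)$ that is pointwise small near $s=0$, and the measure-theoretic/dominated-convergence estimate above is what converts the merely pointwise (and non-uniform in $\mathbf x$) continuity of $\partial_u a(\mathbf x,\cdot)$ into the $L^3(\Omega)$-smallness needed to close the contraction. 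Should one in addition assume $\partial_u a(\mathbf x,\cdot)$ to be Hölder continuous in $s$ uniformly in $\mathbf x$, the implicit-function-theorem route would go through verbatim and would moreover yield $C^1$ dependence $\psi\mapsto u_\psi$.
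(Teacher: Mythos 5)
Your proof is correct, but it takes a genuinely different route from the paper. The paper applies the implicit function theorem in Banach spaces to $F(\psi,u)=(\mathrm{Q}(u),\,u|_{\partial\Omega}-\psi)$, verifying that $F$ is $C^1$ and that $D_uF|_{(0,0)}(v)=(\Delta v+\partial_u a(\cdot,0)v,\,v|_{\partial\Omega})$ is a homeomorphism via the Fredholm alternative and the injectivity hypothesis \eqref{eq:map}. You instead split $u=E\psi+w$, invert the same linearised operator $L=\Delta+\partial_u a(\cdot,0)$ on $H^1_0(\Omega)$ (same Fredholm-plus-injectivity argument), and close a Banach contraction for the residual nonlinearity $g(\mathbf x,s)=a(\mathbf x,s)-\partial_u a(\mathbf x,0)s$. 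The two proofs hinge on the identical linearisation; the difference is how the nonlinear remainder is controlled. The paper's route is shorter but leans on the Fr\'echet differentiability of the Nemytskii map $u\mapsto a(\cdot,u)$ from $H^1(\Omega)$ into $H^{-1}(\Omega)$, which it asserts in one line via the expansion \eqref{eq:F der}; this does in fact hold for $n=2,3$ because of the norm gap $H^1\hookrightarrow L^6\to L^{6/5}\hookrightarrow H^{-1}$ together with the boundedness and Carath\'eodory continuity of $\partial_u a$ (Krasnoselskii-type theorems), so your caution that it ``need not be differentiable'' is overly pessimistic here, though your point that the paper's justification is too quick is fair. Your contraction argument buys robustness: it never needs differentiability of the superposition operator, only the small-Lipschitz estimate for $g$, which you extract from the Chebyshev/dominated-convergence argument in $L^3$; the price is that you do not get the $C^1$ dependence of $\psi\mapsto u_\psi$ that the implicit function theorem provides.

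One small bookkeeping point in your uniqueness step: ``shrinking $\delta$ further'' does not by itself place the ball $\{\|w\|_{H^1}\le (C+C_0)\delta\}$ inside $B_\rho$ if $\rho$ was fixed as $2C_1\delta$, since the ratio is $\delta$-independent. The fix is immediate: choose the contraction radius $\rho=K\delta$ with $K\ge C+C_0$ from the outset; your smallness estimate only requires $\delta+\rho$ small, so for fixed $K$ this is still achieved by taking $\delta$ small, and the rest of the argument is unchanged.
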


\begin{proof}
	We adopt the  implicit function theorem in Banach spaces \cite[Theorem 10.6]{Renrog} to prove the existence. 
Introduce the following map
	\begin{equation}\nonumber
	F~:~ H^{\frac{1}{2}} (\partial\Omega) \times  H^1(\Omega) \rightarrow H^{-1}(\Omega) \times H^{\frac{1}{2}} (\partial\Omega),   \quad F({\psi,u}) = (\mathrm{Q}(u) , u\big |_{\partial\Omega} - \psi).
	\end{equation}
	We  first show that the image of $F$ belongs to  $Z$. Recall that  $a(\mathbf{x},u)$ defined in \eqref{eq:aaa1} is $C^1$-continuous with respect to $u$ for a fixed $\mathbf x\in \Omega$.  One know that 
	\begin{equation}\nonumber
	u \rightarrow a(\mathbf{x} , u)
	\end{equation}
maps $H^1(\Omega)$ to $L^2(\Omega)$. Since $u \in H^1(\Omega)$, $\mathrm{Q}(u) \in H^{-1}(\Omega)$ is defined in \eqref{eq:Q} and $(u|_{\partial\Omega} - \psi) \in H^{\frac{1}{2}}(\partial\Omega)$. Therefore $F$ is well defined.
	
	Next, we prove that $F$ is continuously differentiable. Recall that  $a(\mathbf x, u)$ is $ C^1$-continuous with respect to $u$ for a fixed $\mathbf x\in \Omega$, which implies that
	$$
	a(\mathbf x, u+v)=a(\mathbf x, u)+\partial_u a(\mathbf x, u)v+\|v\|_{H^1(\Omega ) }\delta(v),\quad\lim_{\|v\|_{H^1(\Omega ) }\rightarrow 0 } \delta (v)=0.
	$$
	Therefore, it yields that
	\begin{align}\label{eq:F der}
		F(\psi+\varphi, u+ v )=F(\psi, u)+(\Delta v+\partial_u a(\mathbf x, u)v,  v\big| _{\partial \Omega }-\varphi )+(\|v\|_{H^1(\Omega ) }\delta(v), 0).
	\end{align}	
	Noting that $a(\mathbf x, u)$ is $ C^1$-continuous with respect to $u$ for a fixed $\mathbf x\in \Omega$, we  can conclude that  $F$ is continuously differentiable from $H^{1/2}(\partial \Omega ) \times H^1(\Omega )$ to $H^{-1} (\Omega ) \times H^{1/2}(\partial \Omega ) $.

	From \eqref{eq:F der}, the linearization of $F$ at $(0,0)$ is
	\begin{equation}\nonumber
	D_u F|_{(0,0)}(v) = ( \Delta v + \partial_u a(\mathbf{x}, 0)v, v|_{\partial\Omega}).
	\end{equation}

	In the  following we show that $D_u F|_{(0,0)}$ is a homeomorphism from $H^1( \Omega	) $ to  $H^{-1} (\Omega ) \times H^{1/2}(\partial \Omega ) $ under the condition \eqref{eq:map}. To this end, consider the following Dirichlet boundary value problem 
	\begin{equation}\label{eq:v  eq}
	\begin{cases}
	\Delta v +\partial_u a(\mathbf{x} , 0)v = f & \quad \mbox{ in } \Omega,\\
	v = \varphi & \quad \mbox{ on } \partial\Omega,
	\end{cases}
	\end{equation}
where $f\in H^{-1}(\Omega)$ and $\varphi\in H^{1/2}(\partial  \Omega ) $. Suppose that there exists a solution to \eqref{eq:v eq}, then the solution is unique by using \eqref{eq:map}. Therefore, utilizing Fredholm alternative (cf. \cite[Proposition 1.9]{Tay}), one can show that  there exist a solution to  \eqref{eq:v  eq} in $H^1(\Omega)$ for any source in $H^{-1}(\Omega)$ and the Dirichlet data in $H^{1/2}(\partial \Omega )$.
	
According to	the implicit function theorem in Banach spaces \cite[Theorem 10.6]{Renrog}, we  know that there is a $\varepsilon >0$ and an open ball $B_\varepsilon = B(0,\varepsilon )\subset H^{1/2} (\partial \Omega ) $ and a continuously differential mapping $ \mathcal T: B_\varepsilon \rightarrow  H^{1}(\Omega )$ such that 
	\begin{equation}\nonumber
	F(\psi, \mathcal T(\psi))= (0,0)
	\end{equation}
	under the  condition $\Vert \psi\Vert_{H^{1/2}(\partial\Omega)} \leq \delta$. 
	Since $\mathcal T$ is Lipschitz continuous and $\mathcal T(0) = 0$, $u= \mathcal T(\psi)$ satisfies
	\begin{equation}\nonumber
	\Vert u \Vert_{H^1(\Omega)} \leq C \Vert \psi \Vert_{H^{1/2}(\partial \Omega)}.
	\end{equation}
	Furthermore, one  can  claim that $u =\mathcal T(\psi)$ is the unique  solution to $ F(\psi, S(\psi))=(0,0)$ under the assumption $\Vert \psi \Vert_{H^\frac{1}{2}(\partial \Omega)} \leq \delta$   by necessarily refining the parameter $\delta$, where $ \Vert u \Vert_{H^1(\Omega)} \leq C \delta$.
	
	The proof is complete. 
	\end{proof}

\end{document}